\documentclass[10pt]{article}

\textwidth=140truemm \textheight=200truemm \oddsidemargin=1truecm
\evensidemargin=1truecm

\usepackage{amsmath,amsfonts,amssymb,amsthm,graphics,amscd}
\usepackage{latexsym,mathrsfs}

\usepackage{color}

\newtheorem{theorem}{Theorem}[section]
\newtheorem{lemma}[theorem]{Lemma}
\newtheorem{corollary}[theorem]{Corollary}

\newtheorem{example}[theorem]{Example}
\newtheorem{definition}[theorem]{Definition}
\newtheorem{proposition}[theorem]{Proposition}
\newtheorem{remark}[theorem]{Remark}

\def\ZZ{\mathbb{Z}}

\def\NN{\mathbb{N}}
\def\CC{\mathbb{C}}

\def\DD{\mathbb{D}}

\numberwithin{equation}{section}
\newcommand{\bex}{\begin{example}\rm}
\newcommand{\eex}{\end{example}}

\def\inter{{\rm int}}

\def\acc{{\rm acc}}
\def\iso{{\rm iso}}
\def\dim{{\rm dim}}

\def\ind{{\rm ind}}

\def\codim{{\rm codim}\, }
\def\NN{{\mathbb N}}
\def\ZZ{{\mathbb Z}}
\def\CC{{\mathbb C}}

\def\R{{\bf R}}
\def\DD{\mathbb{D}}
\def\W{\mathcal{W}}
\def\B{\mathcal{B}}

\def\S{\mathcal{S}}
\def\I{\mathcal{I}}
\def\DDD{\bf{D}}
\def\BB{\bf B}
\def\ds{\displaystyle}
\def\p{\partial}

\begin{document}

\title{
Topological uniform descent, quasi-Fredholmness  and operators  originated from semi-B-Fredholm theory}

\author {Sne\v zana  \v{C}. \v{Z}ivkovi\'{c}-Zlatanovi\'{c}\footnote{The author is
supported by the Ministry of Education, Science and Technological
Development, Republic of Serbia, grant no. 174007.}, \ Mohammed Berkani}

\date{}

\maketitle
\setcounter{page}{1}

\begin{abstract}

 In this paper we study  operators originated from semi-B-Fredholm theory and as a consequence  we get some results regarding boundaries and connected hulls of the corresponding spectra.  In particular, we prove that a bounded linear operator $T$ acting on a Banach space, having  topological uniform descent,
 is a {\bf BR} operator if and only if  $0$ is not an accumulation point of the associated spectrum   $\sigma_{\bf R}(T)=\{\lambda\in\CC:T-\lambda I\notin {\bf R}\}$,  where
 ${\bf R}$ denote any of the following classes: upper
 semi-Weyl operators, Weyl operators, upper
 semi-Fredholm operators, Fredholm operators, 
 operators with finite (essential) descent and  ${\bf BR}$ the B-regularity associated to
 ${\bf R}$ as in \cite{P8}. Under the stronger  hypothesis of quasi-Fredholmness of $T,$   we obtain a similar characterisation for $T$ being
  a {$\bf BR$}  operator for much larger  families of sets ${\bf R}.$

 \end{abstract}

2010 {\it Mathematics subject classification\/}:  47A53, 47A10.

{\it Key words and phrases\/}: Semi-B-Fredholm operators;  topological uniform descent; quasi-Fredholm operators; boundary; connected hull.

\section{Introduction}

 Let $\mathbb{N} \ (\mathbb{N}_0)$ denote the set of all positive
(non-negative) integers, and let $\mathbb{C}$ denote the set of all
complex numbers.
We use $L(X)$ to denote  the Banach algebra of bounded linear operators acting on  an infinite dimensional complex  Banach space $X$.  The group of all invertible operators is
denoted by $L(X)^{-1}$. Let $\I(X)$ denote the set of all
bounded below operators and let $\S(X)$ denote the set of all
surjective operators. For $T\in L(X)$, denote by  $\sigma(T)$, $\sigma_p(T)$, $\sigma_{ap}(T)$ and $\sigma_{su}(T)$ its  spectrum, point spectrum, approximate point spectrum and surjective spectrum, respectively.  Also, write  $N(T)$ for its null-space, $R(T)$ for its range, $ \alpha (T)$ for its nullity and  $\beta (T)$ for its defect.
{\it The compression spectrum} of $T\in L(X)$, denoted by $\sigma_{cp}(T)$, is the set of all complex $\lambda$ such that $T-\lambda I$ does not have dense range.

An operator $T\in L(X)$ is {\it upper semi-Fredholm} if $ \alpha (T)<\infty\text{
 and
}R(T)\text{ is closed}$,
while $T$ is
{\it lower semi-Fredholm} if $\beta (T)<\infty$. In the sequel   $\Phi_{+}(X)$ (resp.
$\Phi_{-}(X)$)  will denote the set of upper (resp. lower)
semi-Fredholm operators.   If $T$ is upper or lower semi-Fredholm, then $T$ is called {\it semi-Fredholm}. The set of semi-Fredholm operators is denoted by $\Phi_{\pm}(X)$.
              For  semi-Fredholm operators  the index is defined by
           $\ind (T)=\alpha (T)-\beta (T)$. The set of Fredholm operators is defined as
           $
\Phi (X)=  \Phi_+(X) \cap \Phi_-(X).$
The sets  of upper semi-Weyl, lower semi-Weyl and  Weyl operators are
defined as $\W_+(X)=\{ T\in\Phi_+ (X): \ind (T)\le 0\}$, $\W_-(X)=\{ T\in\Phi_- (X): \ind (T)\ge 0\}$  and  $\W(X)=\{ T\in\Phi (X): \ind (T)=0\}$, respectively.

 For $T\in L(X)$, the {\it upper semi-Fredholm spectrum}, the {\it lower semi-Fredholm spectrum}, the {\it  semi-Fredholm spectrum}, the {\it  Fredholm spectrum}, the {\it upper semi-Weyl spectrum}, the {\it lower semi-Weyl spectrum} and the {\it Weyl spectrum} are defined, respectively, by:
 \begin{eqnarray*}
      \sigma_{\Phi_+}(T) &=& \{\lambda\in\CC:T-\lambda I\notin\Phi_+ (X)\}, \\
    \sigma_{\Phi_-}(T) &=& \{\lambda\in\CC:T-\lambda I\notin\Phi_- (X)\},\\
    \sigma_{\Phi_{\pm}}(T) &=& \{\lambda\in\CC:T-\lambda I\notin\Phi _{\pm}(X)\}
    ,\\
    \sigma_{\Phi}(T) &=& \{\lambda\in\CC:T-\lambda I\notin\Phi (X)\},\\
     \sigma_{\W_+}(T) &=& \{\lambda\in\CC:T-\lambda I\notin\W_+ (X)\}, \\
     \sigma_{\W_-}(T) &=& \{\lambda\in\CC:T-\lambda I\notin\W_+ (X)\}, \\
     \sigma_{\W}(T) &=& \{\lambda\in\CC:T-\lambda I\notin\W (X)\}.
 \end{eqnarray*}

For $n\in\NN_0$ we set $c_n(T)=\dim R(T^n)/R(T^{n+1})$ and $c_n^\prime(T)=\dim N(T^{n+1})/N(T^n)$. From \cite[Lemmas 3.1 and 3.2]{Kaashoek} it follows that $c_n(T)=\codim (R(T)+N(T^n))$ and $c_n^\prime(T)=\dim (N(T)\cap R(T^n))$. Obviously, the sequences $(c_n(T))_n$ and $(c_n^\prime(T))_n$ are decreasing. For each $n\in\NN_0$, $T$ induced a linear transformation from the vector space $R(T^n)/R(T^{n+1})$ to the space $R(T^{n+1})/R(T^{n+2})$ and let $k_n(T)$ denote the dimension of the null space of the induced map. From \cite[Lemma 2.3]{Grabiner} it follows that
\[k_n(T)=\dim (R(T^n)\cap N(T))/(R(T^{n+1}) \cap N(T))=\dim (R(T)+N(T^{n+1}))/(R(T)+N(T^n)).\] From this it is easily
seen that
$
k_n(T)=c_n^{\prime}(T)-c_{n+1}^{\prime}(T) $ if $c_{n+1}^{\prime}(T)<\infty$ and $ k_n(T)=c_n(T)-c_{n+1}(T)$ if $c_{n+1}(T)<\infty$.

The  {\it descent} $\delta(T)$ and the {\it ascent} $ a(T) $ of $T$ are defined by
 $ \delta(T)=\inf \{ n\in\NN_0:c_{n}(T)=0 \}=\inf  \{n\in\NN_0: R(T^n) = R(T^{n+1})\}$
and
 $ a(T)=\inf \{ n\in\NN_0:c^\prime_{n}(T)=0 \}= \inf \{n\in\NN_0 : N(T^{n})=N(T^{n+1})\}$. We set formally $\inf\emptyset =\infty$.

The {\it essential  descent} $\delta_e(T)$ and the {\it essential ascent} $ a_e(T) $ of $T$ are defined by
 $ \delta_e(T)=\inf \{ n\in\NN_0:c_{n}(T)<\infty \}$
and
 $ a_e(T)=\inf \{ n\in\NN_0:c^\prime_{n}(T)<\infty \}$.

The sets  of upper semi-Browder, lower semi-Browder and  Browder operators are
defined as $\B_+(X)=\{ T\in\Phi_+ (X): a (T)<\infty\}$, $\B_-(X)=\{ T\in\Phi_- (X): \delta(T)<\infty\}$  and  $\B(X)=\B_+(X)\cap \B_-(X)$, respectively.
 For $T\in L(X)$, the {\it upper semi-Browder spectrum}, the {\it lower semi-Browder spectrum} and the {\it  Browder spectrum} are defined, respectively, by:
 \begin{eqnarray*}
      \sigma_{{\cal B}_+}(T) &=& \{\lambda\in\CC:T-\lambda I\notin\B_+(X)\}, \\
    \sigma_{{\cal B}_-}(T) &=& \{\lambda\in\CC:T-\lambda I\notin\B_-(X)\},\\
    \sigma_{{\cal B}}(T) &=& \{\lambda\in\CC:T-\lambda I\notin\B(X)\}.
\end{eqnarray*}

Sets of {\it left and
right Drazin invertible} operators, respectively, are defined as
 $
LD(X) = \{T\in L(X) : a(T) < \infty {\rm \  and\ } R(T^{a(T)+1})\ {\rm  is\ closed }\}
$
 and
         $
RD(X) = \{T\in L(X) : \delta(T) < \infty {\rm \  and\ } R(T^{\delta(T)})\ {\rm  is\ closed }\}.
$
 If $   a(T) < \infty $ and $\delta(T)<\infty$, then $T$ is called  {\it Drazin invertible} \cite{aienatriolo}, \cite{aienatriolo2}. By $D(X)$ we denote the set of Drazin invertible operators.

An operator $T\in L(X)$ is a {\it left essentially  Drazin invertible} operator if
 $
 a_e(T) < \infty$ and $ R(T^{a_e(T)+1})$  is closed.
 If
        $\delta_e(T) < \infty$  and $ R(T^{\delta_e(T)})$  is closed, then $T$ is called {\it right essentially  Drazin invertible}.
In the sequel   $LD^e(X)$ (resp.
$RD^e(X)$)  will denote the set of left (resp. right) essentially  Drazin invertible
 operators.

 For $T\in L(X)$, the {\it left Drazin spectrum}, the {\it right Drazin spectrum},  the {\it  Drazin spectrum}, the {\it left essentially Drazin   spectrum}, the {\it right essentially Drazin spectrum}, the {\it descent spectrum} and the {\it essential descent spectrum} are defined, respectively, by:
 \begin{eqnarray*}
      \sigma_{LD}(T) &=& \{\lambda\in\CC:T-\lambda I\notin LD(X)\}, \\
    \sigma_{RD}(T) &=& \{\lambda\in\CC:T-\lambda I\notin RD(X)\},\\
    \sigma_D(T) &=& \{\lambda\in\CC:T-\lambda I\notin D(X)\},\\
     \sigma_{LD}^e(T) &=& \{\lambda\in\CC:T-\lambda I\notin LD^e(X)\}, \\
     \sigma_{RD}^e (T)&=& \{\lambda\in\CC:T-\lambda I\notin RD^e(X)\},\\
     \sigma_{dsc} (T)&=& \{\lambda\in\CC:\delta(T-\lambda I)=\infty\},\\
     \sigma_{dsc}^e (T)&=& \{\lambda\in\CC:\delta_e(T-\lambda I)=\infty\}.
      \end{eqnarray*}
An operator $T \in
L(X)$ is said to be {\it quasi-Fredholm} if there is $d\in\NN_0$ such that $k_n(T)=0$ for  all $n\ge d$ and $R(T^{d+1})$ is closed.
The set of quasi-Fredholm operators includes many sets of operators such as left (right) Drazin invertible operators, left (right) essentially Drazin invertible operators, upper (lower) semi-B-Weyl operators (see \cite{P8}).

For $T \in
L(X)$ we say that it is {\em Kato} if $R(T)$ is closed and $N(T) \subset
R(T^n)$ for every $ n \in \mathbb{N}$. An operator  $T\in L(X)$ is  {\em nilpotent} when $T^n=0$ for some
$n \in \mathbb{N}$.
An operator $T \in L(X)$  is said to be of {\em Kato type}  if there exist closed subspaces $X_1,\ X_2$
 such that $X=X_1\oplus X_2$, $T(X_i)\subset X_i$, $i=1,2$,  $T_{|X_1}$ is nilpotent and $T_{|X_2}$ is Kato. Every operator of Kato type is a quasi-Fredholm operator.
In the case of Hilbert spaces, the set of quasi-Fredholm operators coincides
with the set of Kato type operators.

For $T \in L(X)$ and every $d\in\NN_0$, the operator range topology on $R(T^d)$ is defined by the norm $\|\cdot\|_d$  such that for every $y\in R(T^d)$,
$$
\|y\|_d=\inf\{\|x\|:x\in X,\ y=T^dx\}.
$$

Operators which have eventual topological uniform descent were introduced by Grabiner in \cite{Grabiner}:
\begin{definition} \rm
 Let $T \in L(X)$. If there is $d\in\NN_0$ for which $k_n(T)=0$ for $n\ge d$, then $T$ is said to have uniform descent for $n\ge d$. If in addition, $R(T^n)$ is closed in the operator range topology of $R(T^d)$ for $n\ge d$, then we say that $T$ has {\it eventual topological uniform descent} and, more precisely, that $T$ has {\it  topological uniform descent for} (TUD for brevity) $n\ge d$.
\end{definition}
It is easily seen that if $T$ has finite  nullity, defect, ascent or essential ascent, then it has uniform descent. If $T$ has finite descent or essential descent, then $T$ has TUD. Also, the set of operators which have TUD contains the set of quasi-Fredholm operators \cite{P8}.

For $T\in L(X)$,    the {\it Kato type  spectrum}, the {\it quasi-Fredholm  spectrum} and the {\it topological uniform descent spectrum} are defined, respectively, by:
\begin{eqnarray*}\sigma_{Kt}(T)&=&\{\lambda \in
\CC: T-\lambda\ \text{ is\ not\ of\ Kato\ type}\},\\
\sigma_{q\Phi}(T)&=&\{\lambda \in \CC: T-\lambda \ \text{ is\ not\ quasi-Fredholm}\},\\
\sigma_{TUD}(T)&=&\{\lambda \in \CC: T-\lambda \ \text{ does\ not\  have\ TUD}\}.
\end{eqnarray*}

We use the following notation (\cite{P8}, \cite{MbekhtaMuller}):

\begin{center}
\begin{tabular}{ccc} 
${\bf R_1}=\S(X)$ & ${\bf R_2}=\B_-(X)$ & ${\bf R_3}=RD(X)$ \\
${\bf R_4}=\Phi_-(X)$ & ${\bf R_5}=RD^e(X)$ &  \\
${\bf R_6}=\I(X)$ &
${\bf R_7}=\mathcal{B}_+(X)$ & ${\bf R_8}=LD(X)$  \\
${\bf R_9}= \Phi_{+}(X)$ &
${\bf R_{10}}= LD^e(X)$ &   \\
\end{tabular}
\end{center}
and
\begin{eqnarray*}
 && {\bf R_{4}^a}=\{T\in L(X) :\delta(T)<\infty\},\\&&{\bf R_{5}^a}=\{T\in L(X) :\delta^e(T)<\infty\}.
\end{eqnarray*}

For a bounded linear operator T and $n\in\NN_0$ define $T_n$ to be the
restriction of $T$ to $R(T^n)$ viewed as a map from $R(T^n)$ into $R(T^n)$ (in particular,
$T_0 = T$). If $T \in L(X) $ and if there exist an integer  $n$
 for which the range space $R(T^n)$ is closed and $T_{n}$ belongs to the class  $ {\bf R}$, we will say that $T$ belongs to the class ${\bf BR}$, where ${\bf R}\in\{{\bf R}_i:i=1,\dots,10\}\cup\{\R_4^a,\R_5^a \}\cup\{\Phi(X),\B(X), \W_+(X),\W_-(X),\W(X)\}$. For $T\in L(X)$  let $\sigma_{\R}(T)=\{\lambda\in\CC:T-\lambda I\notin \R\}$ and $\sigma_{\BB\R}(T)=\{\lambda\in\CC:T-\lambda I\notin \BB\R\}$.

  More details, if for an integer $n$ the range space $R(T^n)$ is closed and $T_n$ is   Fredholm (resp. upper semi-Fredholm, lower semi-Fredholm, Browder, upper semi-Browder, lower semi-Browder), then $T$ is called a {\it B-Fredholm}  (resp. {\it upper
semi-B-Fredholm, lower semi-B-Fredholm}, {\it B-Browder}, {\it upper
semi-B-Browder, lower semi-B-Browder} ) operator. If $T \in L(X)$ is upper or lower semi-B-Fredholm, then T is called
{\it semi-B-Fredholm}.
The index $\ind(T)$ of a semi-B-Fredholm
operator T is defined as the index of the semi-Fredholm operator $T_n$. By
\cite[Proposition 2.1]{P7} the definition of the index is independent of the integer n.
An operator $T \in L(X)$
is {\it  B-Weyl} (resp. {\it  upper semi-B-Weyl, lower semi-B-Weyl}) if $T$ is   B-Fredholm
and ind(T) = 0 (resp. $T$ is upper semi-B-Fredholm and $\ind(T) \le 0$, $T$ is lower semi-B-
Fredholm and $\ind(T) \ge 0$).

 For $T\in L(X)$, the {\it upper semi-B-Fredholm spectrum}, the {\it  lower semi-B-Fredholm spectrum}, the {\it  B-Fredholm spectrum}, the {\it  upper semi-B-Weyl spectrum}, the {\it  lower semi-B-Weyl spectrum}, the {\it  B-Weyl spectrum}, the {\it  upper semi-B-Browder spectrum}, the {\it  lower semi-B-Browder spectrum} and  the {\it  B-Browder spectrum} are defined, respectively, by:
 \begin{eqnarray*}
      \sigma_{B\Phi_+}(T) &=& \{\lambda\in\CC:T-\lambda I\ {\rm is\ not\ upper\ semi-B-Fredholm}\}, \\
    \sigma_{B\Phi_-}(T) &=& \{\lambda\in\CC:T-\lambda I\ {\rm is\ not\ lower\ semi-B-Fredholm}\},\\
    \sigma_{B\Phi}(T) &=& \{\lambda\in\CC:T-\lambda I\ {\rm is\ not\ B-Fredholm}\},\\
     \sigma_{B\W_+}(T) &=& \{\lambda\in\CC:T-\lambda I\ {\rm is\ not\ upper\ semi-B-Weyl}\}, \\
     \sigma_{B\W_-}(T) &=& \{\lambda\in\CC:T-\lambda I\ {\rm is\ not\ lower\ semi-B-Weyl}\}, \\
     \sigma_{B\W}(T) &=& \{\lambda\in\CC:T-\lambda I\ {\rm is\ not\ B-Weyl}\},\\
     \sigma_{B\B_+}(T) &=& \{\lambda\in\CC:T-\lambda I\ {\rm is\ not\ upper\ semi-B-Browder}\}, \\
     \sigma_{B\B_-}(T) &=& \{\lambda\in\CC:T-\lambda I\ {\rm is\ not\ lower\ semi-B-Browder}\}, \\
     \sigma_{B\B}(T) &=& \{\lambda\in\CC:T-\lambda I\ {\rm is\ not\ B-Browder}\}.
 \end{eqnarray*}

We recall that  the set of Drazin invertible operators (resp, $LD(X)$, $RD(X)$) coincides
with the set of B-Browder (resp.  upper
semi-B-Browder, lower semi-B-Browder) operators, while the set of left (right) essentially  Drazin invertible operator coincides
with the set of   upper (lower)
semi-B-Fredholm  operators \cite[Theorem 3.6]{P8}, \cite{aienatriolo}, \cite{aienatriolo2}. Therefore, for any $T\in L(X)$ it holds:
\begin{equation*}
  \sigma_D(T)=\sigma_{B\B}(T),\ \ \sigma_{LD}(T)=\sigma_{B\B_+}(T),\ \ \sigma_{RD}(T)=\sigma_{B\B_-}(T),
\end{equation*}
and
\begin{equation*}
  \sigma_{LD}^e(T)=\sigma_{B\Phi_+}(T),\ \ \sigma_{RD}^e(T)=\sigma_{B\Phi_-}(T).
\end{equation*}

If $K \subset \mathbb{C}$, then $\partial K$ is the
boundary of $K$, $\acc \, K$ is the set of accumulation points of
$K$,  $\inter\, K$ is the set of interior points of $K$ and $\iso\, K$ is the set of isolated points of $K$.  For a compact set $K\subset\CC$, $\eta K$ denotes its connected hull.

  The aim of  this  paper  is to give characterization of the $\BB\R$ classes through  properties such as topological uniform  descent  or quasi-Fredholmness, and   properties of the  appropriate spectra  $\sigma_{\R}$,  as well as to get some results regarding boundaries and connected hulls of  $\BB\R$-spectra.

 Q. Jiang, H. Zhong and Q. Zeng in   \cite[Theorem 3.2]{JiangJMAA} characterize the set of left  Drazin invertible operators proving that if $T-\lambda I$ has TUD, then $T-\lambda I$ is left Drazin invertible if and only if $\sigma_{ap}(T)$ does not cluster at $\lambda$, and also, if and only if $\lambda$ is not an interior point of $\sigma_{ap}(T)$.
 M. Berkani, N. Castro and S.V. Djordjevi\'c proved in \cite[Theorem 2.5]{BCD} that, under the same condition that $T-\lambda I$ has TUD,  $\sigma_{p}(T)$ does not cluster at $\lambda$ if and only if $a(T-\lambda I)<\infty$. Further
Q. Jiang, H. Zhong and Q. Zeng in   \cite[Theorem 3.4]{JiangJMAA} proved that if  $T-\lambda I$ has TUD, then $\delta(T-\lambda I)<\infty$ if and only if $\sigma_{su}(T)$ does not cluster at $\lambda$, and also, if and only if $\lambda$ is not an interior point of $\sigma_{su}(T)$.

In this paper we  characterize the sets  of upper and lower semi-B-Weyl operators, as well as the sets of left and right essentially Drazin invertible operators. We also give
further  characterisations of left and right Drazin invertible operators.
  By using Grabiner's punctured neighborhood theorem \cite[Theorem 4.7]{Grabiner}, \cite[Thorem 4.5]{P8} we prove that

  \parbox{11cm}{\begin{eqnarray*}
T\in {\bf BR}&\Longleftrightarrow&T\ {\rm is\ quasi-Fredholm}\ \ \wedge\ \  0\notin\acc\, \sigma_{{\bf R}}(T)\\&\Longleftrightarrow&T\ {\rm is\ quasi-Fredholm}\ \ \wedge\ \  0\notin\inter\, \sigma_{{\bf R}}(T),\label{senka1}
\end{eqnarray*}}  \hfill
\parbox{1cm}{\begin{eqnarray}\end{eqnarray}}

\noindent  for $\R\in\{{\bf R_2,R_3,R_4,R_5},\W_-(X)\}$.
 By an example we show that the condition that $T$ is quasi-Fredholm in the previous formulas  can not be replaced by a weaker condition that $T$ has topological uniform descent.

Further  we prove that

\parbox{11cm}{\begin{eqnarray*}
T\in {\bf BR}&\Longleftrightarrow&T\ {\rm has\ TUD}\ \ \wedge\ \  0\notin\acc\, \sigma_{{\bf R}}(T)\\&\Longleftrightarrow&T\ {\rm has\ TUD}\ \ \wedge\ \  0\notin\inter\, \sigma_{{\bf R}}(T),\label{senka2}
\end{eqnarray*}}  \hfill
\parbox{1cm}{\begin{eqnarray}\end{eqnarray}}

\noindent  for $\R\in\{{\bf  R_7,R_8,R_9,R_{10}, R^a_4, R^a_5}, \W_+(X),\W(X),\Phi(X), \B(X)\}$.

  The condition that $T$ has TUD ($T$ is quasi-Fredholm) in the previous equivalences \eqref{senka2} (\eqref{senka1}) cannot be ommited and it is demonstrated by an example.
%

As a consequence of these characterizations, for $\R\in\{{\bf R_1, R_2,R_4,R_6, R_7,R_9}\}\cup\{\W_+(X),\break\W_-(X),\W(X),\Phi(X), \B(X), L(X)^{-1}\}$  we obtain that
 $
 \inter\, \sigma_{\bf R}(T)=\inter\, \sigma_{\bf{BR}}(T)
 $,
 $
 \partial\,  \sigma_{\bf{BR}}(T)\subset \partial\,  \sigma_{\bf R}(T)
 $
 and the set
  $\sigma_{\bf R}(T)\setminus \sigma_{\bf{BR}}(T)$ consists of at most countably many isolated points.
 Also we obtain that the boundary of $\sigma_{\bf{BR}}(T)$, for
 $\R\in\{{\bf R_6, \R_7,\R_8,\R_9,\R_{10}, \R^a_4, \R^a_5},\W_+(X),\break
  \W(X),\Phi(X), \B(X)\}$
 is contained in  $\sigma_{TUD}(T)$,  while the boundary of $\sigma_{\bf{BR}}(T)$, where
  $\R\in\{{\bf R_1,R_2,R_3,R_4,R_5},\W_-(X)\}$,
  is contained in  $\sigma_{q\Phi}(T)$, and by an example it is shown that it is not  contained in the  TUD spectrum.

Boundaries of spectra originated from Fredholm theory were investigated by  K. Mili\v ci\'c and K. Veseli\'c in \cite[Theorem 7]{MV}. They  proved the following inclusions:

{\footnotesize
$$\begin{array}{ccccccccccc}
&&&&   &  &    &   & \p\sigma_{\Phi_+}(T)& & \\
&&& & & &
&\!\rotatebox{20}{$\subset$}& & \rotatebox{-20}{$\subset$}&\\
  & &  \p\sigma_{\B}(T) & \subset &
\p\sigma_{\W}(T) & \subset &  \p\sigma_{{ \Phi}}(T)&
    & &&  \p \sigma_{\Phi_{\pm}}(T).\\
&&&  & & &
&\rotatebox{-20}{$\subset$}& & \rotatebox{20}{$\subset$}&\\
& &&&   & &  & & \p\sigma_{\Phi_-}(T) & &\\
\end{array}$$
}

 V. Rako\v cevi\'c proved (see \cite[Theorem 1]{V1})
that $\partial \sigma_{\W}(T)\subset \sigma_{\W_+}(T)$ and hence there is the inclusion $\partial \sigma_{\W}(T)\subset \p\sigma_{\W_+}(T)$. In \cite[Corollary 2.5]{V2}  it is proved  that $\p\sigma_{\B}(T)\subset\p\sigma_{\B_+}(T)\subset\p\sigma_{\W_+}(T)$, as well as that $\eta\sigma_{\B}(T)= \eta\sigma_{\B_+}(T)=\eta\sigma_{\\W_+}(T)$.
  The following inclusions are known:

\bigskip

{\footnotesize
$$\begin{array}{ccccccccccc}
&&&&  \p \sigma_{\B_+}(T) & \subset &    \p\sigma_{{\W_+}}(T) &  \subset  & \p\sigma_{\Phi_+}(T)& & \\
&&& \rotatebox{20}{$\subset$} & & \rotatebox{20}{$\subset$}&
&\!\rotatebox{20}{$\subset$}& & \rotatebox{-20}{$\subset$}&\\
  & &  \p\sigma_{\B}(T) & \subset &
\p\sigma_{\W}(T) & \subset &  \p\sigma_{{ \Phi}}(T)&
  \subset  & &&  \p \sigma_{\Phi_{\pm}}(T).\\
&&& \rotatebox{-20}{$\subset$} & & \rotatebox{-20}{$\subset$}&
&\rotatebox{-20}{$\subset$}& & \rotatebox{20}{$\subset$}&\\
& &&&   \p\sigma_{\B_-}(T)& \subset& \p \sigma_{{\W_-}}(T) & \subset & \p\sigma_{\Phi_-}(T) & &\\
\end{array}$$
}

We generalize these results to the case of spectra originated from semi-B-Fredholm theory and prove  the following inclusions: 

\medskip

{\footnotesize
$$\begin{array}{ccccccccccc}
&&&&  \p \sigma_{B\B_+}(T) & \subset &    \p\sigma_{B{\W_+}}(T) &  \subset  & \p\sigma_{B\Phi_+}(T)& & \\
&&& \rotatebox{20}{$\subset$} & & \rotatebox{20}{$\subset$}&
&\!\rotatebox{20}{$\subset$}& & \rotatebox{-20}{$\subset$}&\\
  & &  \p\sigma_{B\B}(T) & \subset &
\p\sigma_{B\W}(T) & \subset &  \p\sigma_{{ B\Phi}}(T)&
  \subset  & &&  \p \sigma_{q\Phi}(T),\\
&&& \rotatebox{-20}{$\subset$} & & \rotatebox{-20}{$\subset$}&
&\rotatebox{-20}{$\subset$}& & \rotatebox{20}{$\subset$}&\\
& &&&   \p\sigma_{B\B_-}(T)& \subset& \p \sigma_{B{\W_-}}(T) & \subset & \p\sigma_{B\Phi_-}(T) & &\\
\end{array}$$
}

\bigskip

{\footnotesize
$$\begin{array}{ccccccccccc}
&&&&   \p\sigma_{B\B_+}(T) & \subset &    \p\sigma_{B{\W_+}}(T) &   && & \\
&&& \rotatebox{20}{$\subset$} & & \rotatebox{20}{$\subset$}&
&\!\rotatebox{-20}{$\subset$}& & &\\
&& \p\sigma_{B\B}(T)  & \subset & \p \sigma_{{ B\W}}(T)&
  \subset  & \p\sigma_{{ B\Phi}}(T)&\subset&\p \sigma_{B\Phi_+}(T) &\subset &\p \sigma_{TUD}(T),\\
  &&&\rotatebox{-20}{$\subset$} & & & &\rotatebox{-20}{$\subset$}& & \rotatebox{20}{$\subset$}&\\
&\ &&&  &\p\sigma_{dsc}(T) \ &   & \subset & \p\sigma_{dsc}^e(T) & &\\
\end{array}$$
}


\noindent as well as that the connected hulls of all spectra  mentioned in the previous inclusions are mutually  equal and also coincide with the connected hull of Kato type spectrum. 

As an application we get that a bounded linear operator $T$ is meromorphic, that is its non-zero spectral points are poles of its resolvent, if and only if $\sigma_{B\Phi}(T)\subset\{0\}$ and this is exactly when $\sigma_{TUD}(T)\subset\{0\}$. This result was obtained earlier  (see \cite{P13}  and \cite{Q. Jiang}).
 Q. Jiang, H. Zhong and S. Zhang in \cite[Corollary 3.3]{Q. Jiang} proved it by using the local constancy of the mappings $\lambda\mapsto K(\lambda I-T)+H_0(\lambda I-T)$ and $\lambda\mapsto \overline{K(\lambda I-T)\cap H_0(\lambda I-T)
}$ \cite[Theorem 2.6]{Q. Jiang} and results about SVEP established in \cite{JiangJMAA},
  but our method of proof is rather different and more direct.
  Q. Jiang, H. Zhong and S. Zhang   also obtained  that if $\rho_{TUD}(T)$ has only one component, then $\sigma_D(T)=\sigma_{TUD}(T)$ \cite[Theorem 3.1]{Q. Jiang} and hence, if $\sigma(T)$ is countable or contained in a line segment, then $\sigma_{D}(T)=\sigma_{TUD}(T)$ \cite[p. 1156]{Q. Jiang}. We give here an alternative proof of these results and  get more than this: if $\sigma(T)$ is  contained in a line, then
$\sigma_{D}(T)=\sigma_{TUD}(T)$, and moreover,   if $\sigma_{\bf R}(T)$ is  contained in a line for
     $\R\in\{{\bf R_6, R_7,R_8,R_9,R_{10}, R^a_4, R^a_5}, \W_+(X),\W(X),\Phi(X), \B(X)\}$, then
$\sigma_{\bf{BR}}(T)=\sigma_{TUD}(T)$. On the other side if $\R\in\{{\bf R_1,R_2,R_3,R_4}, {\bf R_5},\W_-(X)\}$ and  $\sigma_{\bf R}(T)$ is  contained in a line, then $\sigma_{\bf{BR}}(T)=\sigma_{qF}(T)$.
  We also prove that if $\sigma_{*}(T)$ is  contained in a line for  $\sigma_{*}\in\{ \sigma_{B\Phi},\sigma_{B\W}, \sigma_{LD}^e, \sigma_{B\W_+}, \sigma_{LD},   \sigma_{dsc}^e, \sigma_{dsc}  \}$,
    then $\sigma_{*}(T)=\sigma_{TUD}(T)$, while  if $\sigma_{*}(T)$ is  contained in a line for
     $\sigma_{*}\in\{  \sigma_{RD}^e, \sigma_{B\W_-}, \sigma_{RD}\}$, then $\sigma_{*}(T)=\sigma_{qF}(T)$. In particular, if   $\sigma_p(T)$ ($\sigma_{cp}(T)$) is countable or contained in a line, then  $\sigma_{LD}(T)=\sigma_{TUD}(T)$ ($\sigma_{RD}(T)=\sigma_{q\Phi}(T)$ and $\sigma_{dsc}(T)=\sigma_{TUD}(T)$).
Furthermore, by using  connected hulls we show that
  if
$\CC\setminus\sigma_{*}(T)$  has only one component where  $\sigma_{*}$ is one of  $\sigma_{q\Phi},\sigma_{Kt},\sigma_{B\Phi},\sigma_{B\W}, \sigma_{LD}^e, \sigma_{B\W_+}, \sigma_{LD}, \sigma_{RD}^e, \sigma_{B\W_-},\break \sigma_{RD}, \sigma_{dsc}^e, \sigma_{dsc}  $, then $\sigma_{*}(T)=\sigma_{D}(T)$.
  Also  we give an alternative proof of Theorem 2.10 in \cite{P13}.
        As a consequence we get that if $\sigma_{*}(T)=\partial\sigma_{*}(T)=\acc\,
\sigma_{*}(T)$, then $\sigma_{*}(T)=\sigma_{TUD}(T)$ for  $\sigma_{*}\in\{\sigma_{\W_+}, \sigma_{\W_-},\sigma_\W, \sigma_{B\W_-}, \sigma_{{\Phi_+}},  \sigma_{{\Phi_-}},  \sigma_{\Phi},\sigma_{RD}^e, \sigma_{ap}, \sigma_{su},\sigma_{\B_+}, \sigma_{\B_-}, \sigma_{\B},\sigma_{RD}, \sigma\}$. In particular, if
  $\sigma_{ap}(T)=\partial\sigma (T)$ ($\sigma_{su}(T)=\partial\sigma (T)$) and every $\lambda\in \partial\sigma (T)$ is not isolated in $\sigma (T)$,  then
$\sigma_{TUD}(T)=\sigma_{ap}(T)$ ($\sigma_{TUD}(T)=\sigma_{su}(T)$). It improves the corresponding results of P. Aiena and E. Rosas \cite[Theorem 2.10, Corollary 2.11]{aienarosas}. These results are then used to find    the TUD spectrum of arbitrary non-invertible isometry. We also  use them to find the  TUD spectrum and  B-spectra of  the forward and backward unilateral shifts on $  c_0(\NN), c(\NN), \ell_{\infty}(\NN)$ or $\ell_p(\NN)$, $p\ge 1$,  and also of $\rm Ces\acute{a}ro$ operator.

\section{Semi-B-Weyl and semi-B-Fredholm  operators}

We start with the following auxiliary assertions.
\begin{lemma}\label{B-poc}
Let    $T \in L(X)$ have TUD   for $n\ge d$ and   finite essential ascent. Then      $R(T^n)$   is closed in $X$  for   each integer $n\geq d $.
\end{lemma}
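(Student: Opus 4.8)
The plan is to reduce everything to the single statement that $R(T^d)$ is closed in $X$, and then to obtain this closedness by removing the (possibly infinite-dimensional) kernel $N(T^d)$ from a sum that topological uniform descent already forces to be closed. First I would record the reduction. Recall that $(R(T^d),\|\cdot\|_d)$ is a Banach space, isometric to $X/N(T^d)$ via the map induced by $T^d$, and that the inclusion $(R(T^d),\|\cdot\|_d)\hookrightarrow X$ is norm-decreasing. If $R(T^d)$ is closed in $X$, then $(R(T^d),\|\cdot\|)$ is complete, so the continuous bijection $(R(T^d),\|\cdot\|_d)\to (R(T^d),\|\cdot\|)$ is a homeomorphism by the open mapping theorem; hence $\|\cdot\|_d$ and $\|\cdot\|$ are equivalent on $R(T^d)$. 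Since $T$ has TUD for $n\ge d$, each $R(T^n)$ with $n\ge d$ is $\|\cdot\|_d$-closed in $R(T^d)$, hence $\|\cdot\|$-closed in $R(T^d)$, hence closed in $X$. So it suffices to prove that $R(T^d)$ is closed in $X$.

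Next I would extract the arithmetic consequences of the two hypotheses. From $k_n(T)=0$ for $n\ge d$ one gets $N(T)\cap R(T^n)=N(T)\cap R(T^{n+1})$ for $n\ge d$, so $c'_n(T)=\dim(N(T)\cap R(T^n))$ is constant for $n\ge d$; finite essential ascent makes this value finite, so in fact $a_e(T)\le d$ and $c'_n(T)=c'<\infty$ for every $n\ge d$. Moreover $N(T^d)\cap R(T^n)$ is finite-dimensional for $n\ge d$: the successive quotients $(N(T^{i+1})\cap R(T^n))/(N(T^i)\cap R(T^n))$ embed, via $T^i$, into $N(T)\cap R(T^{n+i})$, which has dimension $c'$, so $\dim(N(T^d)\cap R(T^n))\le dc'<\infty$.

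Then I would use TUD to produce a closed sum. Under the isometry induced by $T^d$ between $X/N(T^d)$ and $(R(T^d),\|\cdot\|_d)$, the preimage of $R(T^n)$ is $N(T^d)+R(T^{n-d})$, so that $R(T^n)$ being $\|\cdot\|_d$-closed is equivalent to $N(T^d)+R(T^{n-d})$ being closed in $X$. Thus TUD for $n\ge d$ yields that $N(T^d)+R(T^k)$ is closed in $X$ for all $k\ge 0$; in particular $N(T^d)+R(T^d)$ is closed. It then remains to pass from this to closedness of $R(T^d)$ itself, and here I would invoke the operator-range principle that an operator range $W$ (for us $W=R(T^d)$, the range of the bounded operator $T^d$) is closed whenever $W+V$ is closed and $W\cap V$ is finite-dimensional for some closed subspace $V$ (for us $V=N(T^d)$): complementing the finite-dimensional $W\cap V$ inside $V$ reduces matters to an operator range algebraically complemented by a closed subspace, which is then forced to be closed. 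Applying this with $V=N(T^d)$ gives $R(T^d)$ closed, and by the reduction above $R(T^n)$ is closed for all $n\ge d$.

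The hard part is precisely this last passage. Closedness of $N(T^d)+R(T^d)$ together with finiteness of $N(T^d)\cap R(T^d)$ does not in general allow one to cancel $N(T^d)$, since $N(T^d)$ may be infinite-dimensional and no finite-codimension argument is available (the quotients $N(T^{i+1})/N(T^i)$ are infinite-dimensional for $i<a_e(T)$). What rescues the argument is that $R(T^d)$ is an operator range: the finite-dimensional intersection together with the closed sum must force the identity $(R(T^d),\|\cdot\|_d)\to X$ to be bounded below, i.e. the operator-range topology and the norm topology to coincide on $R(T^d)$. Establishing this bounded-below estimate—equivalently, verifying that the relevant algebraic complement is automatically topological—is the crux, and it is exactly where finite essential ascent enters through the finite-dimensionality of $N(T^d)\cap R(T^n)$; the role of Grabiner's analysis of topological uniform descent \cite{Grabiner} together with the operator-range identities recorded in \cite{P8} is to make both the closed-sum input and this final cancellation rigorous.
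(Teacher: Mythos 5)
Your proposal is correct and takes essentially the same route as the paper: finite essential ascent together with uniform descent gives $\dim(N(T^d)\cap R(T^n))<\infty$, TUD gives closedness of $N(T^d)+R(T^n)$ in $X$ (the paper cites \cite[Theorem 3.2]{Grabiner}, where you re-derive this directly from the isometry $X/N(T^d)\cong (R(T^d),\|\cdot\|_d)$), and your ``operator-range cancellation'' step --- complement the finite-dimensional intersection inside $N(T^d)$ and use that an operator range with a closed algebraic complement is closed --- is precisely the content of \cite[Lemma 20.3]{Mu}, which the paper invokes at the same point. The only organizational difference is that you establish closedness for $n=d$ and propagate it to all $n\ge d$ via the open mapping theorem, whereas the paper applies the cancellation lemma to each $R(T^n)$, $n\ge d$, directly; this is a cosmetic rearrangement, not a different argument.
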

\begin{proof} Since $T$ has
finite  essential ascent and TUD   for $n\ge d$, we have  that $$\dim(N(T)\cap R(T^n))<\infty\ {\rm for\ all}\ n\ge d.$$ It means that $\alpha(T_n)<\infty$ for $T_n:R(T^n)\to R(T^n)$  and hence $\alpha(T_n^d)\le d\cdot\alpha(T_n)<\infty$. So we have that
\begin{equation}\label{o1}
  \dim(N(T^d)\cap R(T^n))<\infty\ {\rm for\ all\ }n\ge d.
\end{equation}
From \cite[Theorem 3.2]{Grabiner} it follows that  $N(T^d)  + R(T^n)$ is closed in $X$ for every $n\ge 0$. According to \eqref{o1}, $ N(T^d) \cap R(T^n)$ is closed for every $n\ge d$ and  then by \cite[Lemma 20.3]{Mu} we obtain that  $R(T^n)$ is closed for every $n\ge d$.
\end{proof}
\begin{lemma}\label{B-poc1}
 Let $T\in L(X)$. Then:

\begin{itemize}

\item[(1)]  $T$ has  TUD and $a_e(T)<\infty$  $\Longleftrightarrow$  $T$ is left essentially Drazin invertible.

\item[(2)] $T$ has TUD and $a(T)<\infty$ $\Longleftrightarrow$
  $T$ is left Drazin invertible.

\medskip

 \end{itemize}

\end{lemma}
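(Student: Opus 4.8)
The plan is to prove each equivalence by routing it through the B-Fredholm reformulations recorded in the introduction, namely that the left essentially Drazin invertible operators are exactly the upper semi-B-Fredholm operators and the left Drazin invertible operators are exactly the upper semi-B-Browder operators \cite[Theorem 3.6]{P8}. Since the two items are formally parallel — item (2) being the specialization of item (1) in which finite essential ascent is upgraded to finite ascent and the condition $c_n'(T)<\infty$ is upgraded to $c_n'(T)=0$ — I would carry out item (1) in full and then indicate the word-for-word modifications giving item (2).

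For the forward implication of (1), suppose $T$ has TUD for $n\ge d$ and $a_e(T)<\infty$. First I would observe that $d\ge a_e(T)$: uniform descent forces $N(T)\cap R(T^n)=N(T)\cap R(T^{n+1})$ for $n\ge d$, so the numbers $c_n'(T)=\dim(N(T)\cap R(T^n))$ are constant for $n\ge d$, and finiteness of $a_e(T)$ forces this constant to be finite, whence $c_d'(T)<\infty$ and $a_e(T)\le d$. Lemma \ref{B-poc} then yields that $R(T^n)$ is closed in $X$ for every $n\ge d$; in particular $R(T^d)$ and $R(T^{d+1})$ are closed. Viewing $T_d\colon R(T^d)\to R(T^d)$ on the Banach space $R(T^d)$, its kernel is $N(T)\cap R(T^d)$ of dimension $c_d'(T)<\infty$ and its range is the closed subspace $R(T^{d+1})$, so $T_d$ is upper semi-Fredholm. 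Hence $T$ is upper semi-B-Fredholm, i.e.\ left essentially Drazin invertible. For (2) the same computation gives $c_d'(T)=0$, so $T_d$ is injective with closed range, thus bounded below and a fortiori upper semi-Browder; therefore $T$ is upper semi-B-Browder, i.e.\ left Drazin invertible.

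For the converse of (1), assume $T$ is left essentially Drazin invertible, equivalently upper semi-B-Fredholm, so there is $n$ with $R(T^n)$ closed and $T_n$ upper semi-Fredholm. Then $a_e(T)\le n<\infty$, so $T$ has uniform descent (finite essential ascent implies uniform descent), and it remains only to supply the topological half of TUD. Here I would use that powers of an upper semi-Fredholm operator are again upper semi-Fredholm, so that $R(T_n^{\,k})=R(T^{n+k})$ is closed in the Banach space $R(T^n)$, hence closed in $X$, for every $k$; choosing $d\ge n$ and beyond the stabilization index of the decreasing finite sequence $(c_m'(T))_{m\ge a_e(T)}$ then delivers both halves of TUD for $m\ge d$ (closedness in the operator range topology of $R(T^d)$ following since $R(T^d)$ is closed in $X$). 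Item (2) is obtained by replacing ``upper semi-Fredholm / upper semi-B-Fredholm'' throughout by ``upper semi-Browder / upper semi-B-Browder'', the relevant closed-range stability again coming from the upper semi-Fredholm part.

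I expect the main obstacle to be the bookkeeping of exponents. The definitions of left (essentially) Drazin invertibility pin closedness at the precise power $a_e(T)+1$ (resp.\ $a(T)+1$), whereas Lemma \ref{B-poc} only furnishes closedness from the possibly larger TUD index $d$ onward, and the operator range topology on $R(T^d)$ must be reconciled with the norm of $X$. Phrasing the argument through the ``there exists some good $n$'' formulation of semi-B-Fredholmness, together with the stability of closed ranges under iteration and the identifications \cite[Theorem 3.6]{P8}, is exactly what lets me bypass descending closedness from $R(T^d)$ down to $R(T^{a_e(T)+1})$ by hand.
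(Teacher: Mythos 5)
Your proof is correct, but it finishes differently from the paper's. In the forward direction both arguments begin identically: stabilization of the decreasing sequence $(c_n'(T))$ under uniform descent gives $a_e(T)\le d$ and $c_d'(T)<\infty$, and Lemma~\ref{B-poc} gives closedness of $R(T^n)$ in $X$ for $n\ge d$. At that point the paper applies \cite[Lemma 7]{MbekhtaMuller} to transfer closedness from a high power down to the canonical exponent, so that $R(T^{a_e(T)+1})$ is closed and the \emph{definition} of left essential Drazin invertibility is verified literally; you instead check that $T_d$ is upper semi-Fredholm (finite-dimensional kernel $N(T)\cap R(T^d)$, closed range $R(T^{d+1})$, with the operator range topology on $R(T^d)$ agreeing with the subspace topology by the open mapping theorem) and then invoke the identification of upper semi-B-Fredholm with left essentially Drazin invertible from \cite[Theorem 3.6]{P8}. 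This is a legitimate trade and not circular, since that identification is proved in \cite{P8} independently of the present lemma and is recorded in the paper's introduction; what it costs you is reliance on a heavier equivalence theorem where the paper uses only an elementary exponent-descent lemma, and what it buys you is exactly what you say: no hand bookkeeping from $d$ down to $a_e(T)+1$. For the converse the paper simply cites \cite[p. 166 and 172]{P8} as clear, whereas you supply a self-contained verification (finite essential ascent from semi-Fredholmness of $T_n$, uniform descent from stabilization, closedness of $R(T^{n+k})=R(T_n^{\,k})$ from power-stability of upper semi-Fredholm operators, and the topological half of TUD via closedness of $R(T^d)$ in $X$); all of these steps are sound, and your reduction of item (2) to the same scheme with $c_d'(T)=0$ and ``upper semi-Browder'' in place of ``upper semi-Fredholm'' matches the paper's ``(2) can be proved similarly.''
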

\begin{proof}
(1) Suppose that $T$ has TUD for $n\ge  d$ and that $a_e(T)<\infty$. From Lemma \ref{B-poc} it follows that there exists $n\ge a_e(T)+1$ such that  $R(T^n)$ is closed. According to \cite[Lemma 7]{MbekhtaMuller} it follows that $R(T^{a_e(T)+1})$ is closed and hence $T$ is left essentially Drazin invertible.

The opposite inclusion is clear (see \cite[p. 166 and 172]{P8}).

(2) can be proved similarly.
\end{proof}

\medskip

 In the following two theorems we characterize upper and lower semi-B-Weyl operators.
\begin{theorem}\label{F+-} Let $\lambda\in\CC$,  $T\in L(X)$ and let $T-\lambda I$ have TUD  for $n\ge d$. Then the following statements are equivalent:
\begin{itemize}
\item[(1)] $\sigma_{\W_+}(T)$ does not cluster at $\lambda$;

\item[(2)] $\lambda$ is not an interior point of $\sigma_{\W_+}(T)$;

\item[(3)] $\sigma_{B\W_+}(T)$ does not cluster at $\lambda$;

\item[(4)] $\lambda$ is not an interior point of $\sigma_{B\W_+}(T)$;

\item[(5)] $T-\lambda I$ is an upper semi-B-Weyl operator.

\end{itemize}
\end{theorem}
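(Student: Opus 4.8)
The plan is to reduce to the case $\lambda=0$ (replace $T$ by $T-\lambda I$; note $\sigma_{\W_+}(T-\lambda I)=\sigma_{\W_+}(T)-\lambda$, similarly for $\sigma_{B\W_+}$, while $T-\lambda I$ is upper semi-B-Weyl exactly when it is), so that from now on $T$ has TUD for $n\ge d$ and the task is to relate the behaviour of $T$ at $0$ to the restriction $T_d\colon R(T^d)\to R(T^d)$. The engine is Grabiner's punctured neighborhood theorem \cite[Theorem 4.7]{Grabiner}, \cite[Theorem 4.5]{P8}: it produces $\epsilon>0$ such that for every $\mu$ with $0<|\mu|<\epsilon$ the operator $T-\mu I$ is Kato with closed range, and moreover $\alpha(T-\mu I)=\dim\!\big(N(T)\cap R(T^d)\big)=\alpha(T_d)$ and, via the companion equality $\beta(T-\mu I)=\dim\!\big(X/(R(T)+N(T^d))\big)=\beta(T_d)$, also $\ind(T-\mu I)=\ind(T_d)$. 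Since the TUD hypothesis forces $R(T^{d+1})$ to be closed in the operator range topology of $R(T^d)$, the operator $T_d$ is upper semi-Fredholm precisely when $\alpha(T_d)<\infty$; combined with the monotonicity of $\dim(N(T)\cap R(T^n))$ and the independence of the semi-B-Fredholm index of the chosen power, this shows that $T$ is upper semi-B-Weyl if and only if $\alpha(T_d)<\infty$ and $\ind(T_d)\le 0$.

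Next I would record the structural observation that on the punctured disc $\{0<|\mu|<\epsilon\}$ the two spectra $\sigma_{\W_+}(T)$ and $\sigma_{B\W_+}(T)$ coincide. Indeed, for a Kato operator $S$ one has $N(S)\subset R(S^m)$ for all $m$, so $N(S_m)=N(S)$ and thus $\alpha(S_m)=\alpha(S)$ for every $m$; by \cite[Proposition 2.1]{P7} the index is independent of $m$, so $\ind(S_m)=\ind(S)$, and consequently $S$ is upper semi-B-Weyl if and only if it is upper semi-Weyl. Applying this to each $S=T-\mu I$ with $0<|\mu|<\epsilon$ yields the coincidence, from which $0\in\acc\,\sigma_{\W_+}(T)\Leftrightarrow 0\in\acc\,\sigma_{B\W_+}(T)$, that is, (1) $\Leftrightarrow$ (3).

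The remaining implications I would close through (5). For (5) $\Rightarrow$ (1): if $\alpha(T_d)<\infty$ and $\ind(T_d)\le 0$, then for $0<|\mu|<\epsilon$ we get $\alpha(T-\mu I)=\alpha(T_d)<\infty$, $R(T-\mu I)$ closed and $\ind(T-\mu I)=\ind(T_d)\le 0$, so $T-\mu I\in\W_+(X)$; hence the punctured disc misses $\sigma_{\W_+}(T)$ and $0\notin\acc\,\sigma_{\W_+}(T)$. The implications (1) $\Rightarrow$ (2) and (3) $\Rightarrow$ (4) are immediate, since interior points of any set are accumulation points of it. For (2) $\Rightarrow$ (5): as $0\notin\inter\,\sigma_{\W_+}(T)$ there is $\mu$ with $|\mu|<\epsilon$ and $\mu\notin\sigma_{\W_+}(T)$; if $\mu=0$ then $T\in\W_+(X)\subset B\W_+(X)$, while if $\mu\ne 0$ then $T-\mu I\in\W_+(X)$ forces $\alpha(T_d)=\alpha(T-\mu I)<\infty$ and $\ind(T_d)=\ind(T-\mu I)\le 0$, so $T$ is upper semi-B-Weyl. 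The implication (4) $\Rightarrow$ (5) is identical, except one first uses the punctured-disc coincidence of the preceding paragraph to pass from $\mu\notin\sigma_{B\W_+}(T)$ to $T-\mu I\in\W_+(X)$. Assembling these, $\text{(5)}\Rightarrow\text{(1)}\Rightarrow\text{(2)}\Rightarrow\text{(5)}$ and $\text{(5)}\Rightarrow\text{(3)}\Rightarrow\text{(4)}\Rightarrow\text{(5)}$ (using (1) $\Leftrightarrow$ (3)), so all five statements are equivalent.

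The main obstacle is the accurate extraction of the two stable invariants from the punctured neighborhood theorem, namely that $\alpha(T-\mu I)$ and $\ind(T-\mu I)$ are constant for small $\mu\ne 0$ and equal to $\alpha(T_d)$ and $\ind(T_d)$, together with the verification through the Kato property that the upper semi-B-Weyl and upper semi-Weyl conditions agree on the punctured disc; this coincidence is exactly what lets the cluster/interior behaviour of the \emph{ordinary} upper semi-Weyl spectrum detect the \emph{B}-property of $T$ itself at $0$. A minor but necessary care point is the dichotomy $\mu=0$ versus $\mu\ne 0$ when translating ``$0$ is not an interior point'' into the existence of a genuinely punctured regular value.
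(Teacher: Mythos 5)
Your architecture matches the paper's — Grabiner's punctured neighborhood theorem, reduction of everything to the invariants $\alpha(T_d)=c_d'(T)$ and $\ind(T_d)=c_d'(T)-c_d(T)$, and closing the cycle through (5) — and your punctured-disc observation that $\sigma_{\W_+}(T)$ and $\sigma_{B\W_+}(T)$ coincide for $0<|\mu-\lambda|<\epsilon$ (via Kato-ness of $T-\mu I$ and the index stability of \cite[Proposition 2.1]{P7}) is a correct and pleasant refinement of the paper's inclusion-based handling of (1)$\Rightarrow$(3). However, there is a genuine gap at the pivotal bridge. You conclude that $T-\lambda I$ is upper semi-B-Weyl from $\alpha(T_d)<\infty$ and $\ind(T_d)\le 0$ on the grounds that ``the TUD hypothesis forces $R(T^{d+1})$ to be closed in the operator range topology of $R(T^d)$.'' That is not enough: the definition of a $\BB\R$ operator (here upper semi-B-Weyl) requires an integer $n$ for which $R(T^n)$ is closed \emph{in the norm of $X$}, so that $R(T^n)$ is a Banach space and $T_n$ is genuinely upper semi-Fredholm; closedness of $R(T^{d+1})$ in the operator range topology $\|\cdot\|_d$ does not imply closedness of $R(T^d)$ or $R(T^{d+1})$ in $X$. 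The paper closes exactly this hole with Lemma \ref{B-poc}: TUD for $n\ge d$ together with finite essential ascent (i.e.\ $c_d'(T-\lambda I)=\alpha(T_d)<\infty$, which you do have at this point) forces $R((T-\lambda I)^n)$ to be closed in $X$ for all $n\ge d$. That lemma is not a formality — its proof needs the closedness of $N(T^d)+R(T^n)$ from \cite[Theorem 3.2]{Grabiner}, the finite-dimensionality (hence closedness) of $N(T^d)\cap R(T^n)$, and \cite[Lemma 20.3]{Mu}.

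That this step cannot be waved through is shown by the paper's own Example \ref{primer}: your stated justification makes no use of the finiteness of $\alpha(T_d)$ as opposed to $\beta(T_d)$, so verbatim the same reasoning would prove the lower analogue — that TUD together with $\beta(T_d)<\infty$ and $\ind(T_d)\ge 0$ makes $T-\lambda I$ lower semi-B-Weyl — which is false: the operator of Example \ref{primer} has TUD for $n\ge 1$ and $\beta(T_1)=c_1(T)=0$, yet no power has closed range, so it is not lower semi-B-Weyl (this is precisely why Theorem \ref{F-+} requires quasi-Fredholmness for its implication (4)$\Rightarrow$(5), while Theorem \ref{F+-} does not). Your proof becomes complete once you insert Lemma \ref{B-poc} (or an equivalent argument) to obtain norm-closedness of $R((T-\lambda I)^{d})$ and $R((T-\lambda I)^{d+1})$ before declaring the restriction upper semi-Fredholm; the remainder — the extraction of the stable data $\alpha(T-\mu I)=c_d'(T-\lambda I)$, $\beta(T-\mu I)=c_d(T-\lambda I)$ from \cite[Theorem 4.7]{Grabiner}, the $\mu=\lambda$ versus $\mu\ne\lambda$ dichotomy, and the index bookkeeping — is sound and essentially identical to the paper's.
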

\begin{proof} (1)$\Longrightarrow$(2), (3)$\Longrightarrow$(4) Clear.

(1)$\Longrightarrow$(3), (2)$\Longrightarrow$(4) It follows from the inclusion $\sigma_{B\W_+}(T)\subset \sigma_{\W_+}(T)$.

(4)$\Longrightarrow$(5) Since $T-\lambda I$ has TUD  for $n\ge d$,
from \cite[Theorem 4.7]{Grabiner} it follows that there exists an $\epsilon>0$ such that for every $\mu\in\CC$ the following implication  holds:
\begin{eqnarray}
& 0<|\mu-\lambda|<\epsilon \Longrightarrow\label{BW1}\\& c_n(T-\mu I)=c_d(T-\lambda I)\ {\rm and}\   c_n^\prime(T-\mu I)=c_d^\prime(T-\lambda I)\ {\rm for\ all\ }n\ge 0.\nonumber
\end{eqnarray}
Suppose that $\lambda$ is not an interior point of $\sigma_{B\W_+}(T)$. Then
 there  exists $\mu\in\CC$ such that $0<|\mu-\lambda|<\epsilon$ and $T-\mu I$ is an upper semi-B-Weyl operator. Therefore,
    $c_n^\prime(T-\mu I)=\dim (N(T-\mu I)\cap R((T-\mu I)^n))<\infty $ for $n$ large enough and according to \eqref{BW1} we obtain that $c_d^\prime(T-\lambda I)<\infty$,
     and so $a_e(T-\lambda I)\le d$.
    From Lemma  \ref{B-poc} it follows that 
    $R((T-\lambda I)^{d})$ and $R((T-\lambda I)^{d+1})$ are closed. As $\dim (N(T-\lambda I)\cap R((T-\lambda I)^d))=c_d^\prime(T-\lambda I)<\infty$, we have that
      the restriction of $T-\lambda I$ to $R((T-\lambda I)^{d})$ is an upper semi-Fredholm operator. Consequently,
     $T-\lambda I$ is an upper semi-B-Fredholm operator and since
    \begin{eqnarray*}
  {\rm ind }(T-\lambda I)& =&\dim (N(T-\lambda I)\cap R((T-\lambda I)^{d})-\dim  R((T-\lambda I)^{d})/  R((T-\lambda I)^{d+1})  \\
  & =&c_d^\prime(T-\lambda I)- c_d(T-\lambda I)=c_n^\prime(T-\mu I)- c_n(T-\mu I)\\&=&{\rm ind}(T-\mu I)\le 0,
\end{eqnarray*}
 it follows that $T-\lambda I$ is an upper semi-B-Weyl operator.

 (5)$\Longrightarrow$(1) Suppose that $T-\lambda I$ is an upper semi-B-Weyl operator. Then there exists $d\in\NN_0$ such that  $T-\lambda I$ has TUD  for $n\ge d$, and
  $c_d^\prime(T-\lambda I)=\dim (N(T-\lambda I)\cap R((T-\lambda I)^d))<\infty $  and $\ind(T-\lambda I)=c_d^\prime(T-\lambda I)- c_d(T-\lambda I)\le 0$. For arbitrary $\mu\in\CC$ such that $ 0<|\mu-\lambda|<\epsilon$, according to \eqref{BW1}
 we obtain that  $\alpha(T-\mu I)=c_0^\prime(T-\mu I)=c_d^\prime(T-\lambda I)<\infty$   and since $R(T-\mu I)$ is closed by \cite[Theorem 4.7]{Grabiner}, we conclude that $T-\mu I$ is upper semi-Fredholm with $\ind(T-\mu I)=c_0^\prime(T-\mu I)-c_0(T-\mu I)=c_d^\prime(T-\lambda I)- c_d(T-\lambda I)\le 0$, that is $T-\mu I$ is upper semi-Weyl. Therefore, $\lambda$ is not an accumulation point of $\sigma_{\W_+}(T)$.
\end{proof}

\begin{theorem}\label{F-+} Let $\lambda\in\CC$,  $T\in L(X)$ and let $T-\lambda I$ have TUD  for $n\ge d$. Then the following statements are equivalent:
\begin{itemize}
\item[(1)] $\sigma_{\W_-}(T)$ does not cluster at $\lambda$;

\item[(2)] $\lambda$ is not an interior point of $\sigma_{\W_-}(T)$;

\item[(3)] $\sigma_{B\W_-}(T)$ does not cluster at $\lambda$;

\item[(4)] $\lambda$ is not an interior point of $\sigma_{B\W_-}(T)$.

In particular, if $T-\lambda I$ is quasi-Fredholm, then the statements (1)-(4) are equivalent to the following satement:

\item[(5)] $T-\lambda I$ is  a lower semi-B-Weyl operator.

\end{itemize}
\end{theorem}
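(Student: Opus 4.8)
The plan is to follow the template of Theorem~\ref{F+-}, exploiting that the lower/descent side runs parallel to the upper/ascent side except that range closedness is no longer available for free. As before, $(1)\Rightarrow(2)$ and $(3)\Rightarrow(4)$ are immediate, since a point that is not a cluster point of a set is certainly not an interior point of it; and $(1)\Rightarrow(3)$, $(2)\Rightarrow(4)$ follow at once from the inclusion $\sigma_{B\W_-}(T)\subset\sigma_{\W_-}(T)$ (any $T\in\W_-(X)$ is lower semi-B-Weyl, taking $n=0$), which gives $\acc\,\sigma_{B\W_-}(T)\subset\acc\,\sigma_{\W_-}(T)$ and $\inter\,\sigma_{B\W_-}(T)\subset\inter\,\sigma_{\W_-}(T)$. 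To close the cycle among $(1)$--$(4)$ it therefore suffices to prove $(4)\Rightarrow(1)$, and I would carry this out using the TUD hypothesis only.

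For $(4)\Rightarrow(1)$: since $T-\lambda I$ has TUD for $n\ge d$, Grabiner's punctured neighbourhood theorem \cite[Theorem 4.7]{Grabiner} supplies $\epsilon>0$ so that \eqref{BW1} holds and, moreover, each $T-\mu I$ with $0<|\mu-\lambda|<\epsilon$ has closed range (indeed is semi-regular). Assuming $(4)$, there is a $\mu$ with $0\le|\mu-\lambda|<\epsilon$ for which $T-\mu I$ is lower semi-B-Weyl; reading the invariants of $T-\lambda I$ off this $\mu$ (directly, via the index formula of Theorem~\ref{F+-}, if $\mu=\lambda$, and via \eqref{BW1} if $\mu\ne\lambda$) yields $c_d(T-\lambda I)<\infty$ together with $c_d'(T-\lambda I)-c_d(T-\lambda I)\ge0$. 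Then for every $\nu$ with $0<|\nu-\lambda|<\epsilon$ one has $\beta(T-\nu I)=c_0(T-\nu I)=c_d(T-\lambda I)<\infty$ and $R(T-\nu I)$ closed, so $T-\nu I\in\Phi_-(X)$ with $\ind(T-\nu I)=c_0'(T-\nu I)-c_0(T-\nu I)=c_d'(T-\lambda I)-c_d(T-\lambda I)\ge0$; that is, $T-\nu I\in\W_-(X)$ throughout the punctured disc, whence $\lambda\notin\acc\,\sigma_{\W_-}(T)$, which is $(1)$. This settles the equivalence of $(1)$--$(4)$ under TUD alone.

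For the final statement I would add the hypothesis that $T-\lambda I$ is quasi-Fredholm and prove $(4)\Leftrightarrow(5)$. Here $(5)\Rightarrow(4)$ is trivial, since $(5)$ asserts $\lambda\notin\sigma_{B\W_-}(T)$, so $\lambda$ cannot be interior to that set. For $(4)\Rightarrow(5)$, the computation above already gives $c_d(T-\lambda I)<\infty$ (hence $\delta_e(T-\lambda I)\le d$) and $c_d'(T-\lambda I)-c_d(T-\lambda I)\ge0$; it remains to promote finite essential descent to genuine lower semi-B-Fredholmness. This is exactly where quasi-Fredholmness enters: it guarantees that $R((T-\lambda I)^n)$ is closed in $X$ for all $n\ge d$. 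With this, the restriction $(T-\lambda I)_d$ on $R((T-\lambda I)^d)$ has range $R((T-\lambda I)^{d+1})$ of finite codimension $c_d(T-\lambda I)$, so $(T-\lambda I)_d\in\Phi_-$; thus $T-\lambda I$ is lower semi-B-Fredholm with index $c_d'(T-\lambda I)-c_d(T-\lambda I)\ge0$, i.e.\ lower semi-B-Weyl.

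I expect the main obstacle to be precisely this range-closedness step, and it is the honest reason $(5)$ sits only under the stronger hypothesis. On the ascent side (Theorem~\ref{F+-}) Lemma~\ref{B-poc} converts ``TUD plus finite essential ascent'' into closedness of $R((T-\lambda I)^n)$, because finite essential ascent bounds $\dim\bigl(N(T-\lambda I)\cap R((T-\lambda I)^n)\bigr)$ and one may invoke \cite[Theorem 3.2]{Grabiner} together with \cite[Lemma 20.3]{Mu}. No analogue is available from ``TUD plus finite essential descent'': descent controls codimensions rather than a finite-dimensional intersection, and TUD alone does not force the iterated ranges to be closed in $X$. This asymmetry is why $(5)$ is listed under quasi-Fredholmness rather than bare TUD; I expect it to be genuine, witnessed by an explicit example in which $T$ has TUD and finite essential descent yet fails to be quasi-Fredholm.
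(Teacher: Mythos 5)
Your proposal is correct and follows essentially the same route as the paper: the same cycle via the inclusion $\sigma_{B\W_-}(T)\subset\sigma_{\W_-}(T)$, the same use of Grabiner's punctured neighbourhood theorem \cite[Theorem 4.7]{Grabiner} and \eqref{BW1} to transfer $c_d(T-\lambda I)<\infty$ and $c_d'(T-\lambda I)-c_d(T-\lambda I)\ge 0$ between $\lambda$ and nearby points, and the same use of quasi-Fredholmness to secure closedness of $R((T-\lambda I)^d)$ and $R((T-\lambda I)^{d+1})$ in the step $(4)\Rightarrow(5)$ (the paper obtains this from \cite[Lemma 12]{MbekhtaMuller}, which is exactly the content of the range-closedness fact you invoke). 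The only cosmetic differences are that you treat the possible witness $\mu=\lambda$ in $(4)\Rightarrow(1)$ explicitly and obtain $(5)\Rightarrow(1)$ via the trivial $(5)\Rightarrow(4)$ rather than by the paper's direct computation, both of which are sound.
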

\begin{proof} (1)$\Longrightarrow$(2), (3)$\Longrightarrow$(4) Clear.

(1)$\Longrightarrow$(3), (2)$\Longrightarrow$(4) It follows from the inclusions $\sigma_{B\W_-}(T)\subset \sigma_{\W_-}(T)$.

(4)$\Longrightarrow$(1) 
 Suppose that $\lambda$ is not an interior point of $\sigma_{B\W_-}(T)$.
Since $T-\lambda I$ has TUD for $n\ge d$, according to \cite[Theorem 4.7]{Grabiner}  there exists
an $\epsilon>0$ such that  for every $\mu\in\CC$ the  implication \eqref{BW1} holds. From $\lambda\notin\inter\, \sigma_{B\W_-}(T)$ it follows that there exists $\mu_0\in\CC$ such that $0<|\mu_0-\lambda|<\epsilon$ i $T-\mu_0 I$ is  a lower semi-B-Weyl operator. Hence there exists $n\in\NN_0$ such that $c_n(T-\mu_0 I)=\dim R((T-\mu_0 I)^n)/R((T-\mu_0 I)^{n+1})
<\infty$ and $\ind (T-\mu_0 I)=c_n^{\prime}(T-\mu_0 I) -c_n(T-\mu_0 I)\ge 0$,  which according to \eqref{BW1} implies that $c_d(T-\lambda I)<\infty$ and $c_d^\prime(T-\lambda I)-c_d(T-\lambda I)\ge 0$. Using
\eqref{BW1} again we get that for every $\mu\in\CC$ such that $0<|\mu-\lambda|<\epsilon$ we have that $\beta(T-\mu I)=c_0(T-\mu I)=c_d(T-\lambda I)<\infty$ and hence $T-\mu I$ is lower semi-Fredholm with ${\rm ind}(T-\mu I)=c_0^{\prime}(T-\mu I)-{c_0}(T-\mu I)=c_d^\prime(T-\lambda I)-c_d(T-\lambda I)\ge 0$. This means that $\lambda$ is not an accumulated point of $\sigma_{\W_-}(T)$.

(4)$\Longrightarrow$(5) Suppose that $T-\lambda I$ is quasi-Fredholm. Then  there exists $d\in\NN_0$ such that $R(T-\lambda I)+N((T-\lambda I)^n)=R(T-\lambda I)+N((T-\lambda I)^d)$ for all $n\ge d$ and $R((T-\lambda I)^{d+1})$ is closed.
    So
        $T-\lambda I$ has TUD for $n\ge d$. From \cite[Theorem 4.7]{Grabiner} it follows that there exists an $\epsilon>0$ such that for every $\mu\in\CC$ the  implication \eqref{BW1} holds.

 Further, suppose that  $\lambda\notin \inter\, \sigma_{B\W_-}(T)$. Then there  exists $\mu\in\CC$ such that $0<|\mu-\lambda|<\epsilon$ and $T-\mu I$ is a lower semi-B-Weyl operator. Therefore,
    $c_n(T-\mu I)=\dim (R((T-\mu I)^n)/ R((T-\mu I)^{n+1})<\infty $ for $n$ large enough and according to \eqref{BW1} we obtain that $c_d(T-\lambda I)<\infty$.  As $R((T-\lambda I)^{d+1})$ is closed,
    from \cite[Lemma 12]{MbekhtaMuller},  we conclude that $R((T-\lambda I)^{d})$ is closed.  Since $\dim (R((T-\lambda I)^d)/ R((T-\lambda I)^{d+1})=c_d(T-\lambda I)<\infty$, we have that
      the restriction of $T-\lambda I$ to $R((T-\lambda I)^{d})$ is a lower semi-Fredholm operator.
     Therefore, $T-\lambda I$ is a lower semi-B-Fredholm operator and, as in the proof of the implication (4)$\Longrightarrow$(5) in Theorem \ref{F+-}, we conclude that ${\rm ind }(T-\lambda I)={\rm ind}(T-\mu I)\ge 0$. Consequently,  $T-\lambda I$ is a lower semi-B-Weyl operator.

(5)$\Longrightarrow$(1) Suppose that $T-\lambda I$ is a lower semi-B-Weyl operator. Then there is $d\in\NN_0$ such that  $T-\lambda I$ has TUD  for $n\ge d$ and hence there exists an $\epsilon>0$ such that for every $\mu\in\CC$ the  implication \eqref{BW1} holds. Also we have that
  $$c_d(T-\lambda I)=\dim (R((T-\lambda I)^d)/ R((T-\lambda I)^{d+1})<\infty $$  and $$0\le \ind(T-\lambda I)=c_d^\prime(T-\lambda I)- c_d(T-\lambda I).$$ For arbitrary $\mu\in\CC$ such that $ 0<|\mu-\lambda|<\epsilon$,
 according to \eqref{BW1}, we obtain that  $\beta(T-\mu I)=c_0(T-\mu I)=c_d(T-\lambda I)<\infty$ and  $\ind(T-\mu I)=c_0^\prime(T-\mu I)-c_0(T-\mu I)=c_d^\prime(T-\lambda I)- c_d(T-\lambda I)\ge 0$, which implies that $T-\mu I$ is a lower semi-Weyl operator. Consequently, $\lambda$ is not an accumulation point of $\sigma_{\W_-}(T)$.
\end{proof}

\begin{theorem}\label{W} Let $\lambda\in\CC$,  $T\in L(X)$ and let $T-\lambda I$ have TUD  for $n\ge d$. Then the following statements are equivalent:
\begin{itemize}
\item[(1)] $\sigma_{\W}(T)$ does not cluster at $\lambda$;

\item[(2)] $\lambda$ is not an interior point of $\sigma_{\W}(T)$;

\item[(3)]  $\sigma_{B\W}(T)$ does not cluster at $\lambda$;

\item[(4)] $\lambda$ is not an interior point of $\sigma_{B\W}(T)$;

\item[(5)] $T-\lambda I$ is a B-Weyl operator.
\end{itemize}
\end{theorem}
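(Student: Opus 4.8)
The plan is to mirror the proof of Theorem \ref{F+-}, feeding Grabiner's punctured neighbourhood theorem in the form \eqref{BW1} together with Lemma \ref{B-poc}, and to exploit the fact that B-Weyl carries \emph{both} an upper and a lower semi-B-Fredholm condition. First I would dispose of the routine implications: (1)$\Longrightarrow$(2) and (3)$\Longrightarrow$(4) follow from the inclusion $\inter K\subset\acc K$ valid for any $K\subset\CC$, while (1)$\Longrightarrow$(3) and (2)$\Longrightarrow$(4) follow from the inclusion $\sigma_{B\W}(T)\subset\sigma_{\W}(T)$ (a Weyl operator is in particular B-Weyl, taking $n=0$), which passes both to accumulation points and to interiors. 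It then remains to prove (4)$\Longrightarrow$(5) and (5)$\Longrightarrow$(1); the chain (1)$\Longrightarrow$(2)$\Longrightarrow$(4)$\Longrightarrow$(5)$\Longrightarrow$(1), supplemented by (1)$\Longrightarrow$(3)$\Longrightarrow$(4), then closes all five equivalences.

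For (4)$\Longrightarrow$(5), I would fix $\epsilon>0$ furnishing \eqref{BW1} (available since $T-\lambda I$ has TUD for $n\ge d$) and, assuming $\lambda\notin\inter\,\sigma_{B\W}(T)$, pick $\mu$ with $0<|\mu-\lambda|<\epsilon$ and $T-\mu I$ B-Weyl. Since a B-Weyl operator is B-Fredholm, $c_n(T-\mu I)<\infty$ and $c_n'(T-\mu I)<\infty$ for $n$ large, so \eqref{BW1} transports these to $c_d(T-\lambda I)<\infty$ and $c_d'(T-\lambda I)<\infty$. The decisive step is that $c_d'(T-\lambda I)<\infty$ means $a_e(T-\lambda I)\le d$, so Lemma \ref{B-poc} applies at once and yields closedness of $R((T-\lambda I)^n)$ for all $n\ge d$. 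With $R((T-\lambda I)^d)$ and $R((T-\lambda I)^{d+1})$ closed, the restriction $(T-\lambda I)_d$ has finite nullity $c_d'(T-\lambda I)$, closed range $R((T-\lambda I)^{d+1})$ and finite defect $c_d(T-\lambda I)$, hence is Fredholm, making $T-\lambda I$ a B-Fredholm operator; and $\ind(T-\lambda I)=c_d'(T-\lambda I)-c_d(T-\lambda I)=c_n'(T-\mu I)-c_n(T-\mu I)=\ind(T-\mu I)=0$ by \eqref{BW1}, so $T-\lambda I$ is B-Weyl.

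For (5)$\Longrightarrow$(1), assuming $T-\lambda I$ is B-Weyl it is B-Fredholm, hence quasi-Fredholm and so has TUD for some $n\ge d$, giving \eqref{BW1}; moreover $c_d'(T-\lambda I),\,c_d(T-\lambda I)<\infty$ with $c_d'(T-\lambda I)-c_d(T-\lambda I)=\ind(T-\lambda I)=0$. For every $\mu$ with $0<|\mu-\lambda|<\epsilon$, \eqref{BW1} gives $\alpha(T-\mu I)=c_0'(T-\mu I)=c_d'(T-\lambda I)<\infty$ and $\beta(T-\mu I)=c_0(T-\mu I)=c_d(T-\lambda I)<\infty$, while $R(T-\mu I)$ is closed by \cite[Theorem 4.7]{Grabiner}; thus $T-\mu I$ is Fredholm of index $c_d'(T-\lambda I)-c_d(T-\lambda I)=0$, i.e. $T-\mu I\in\W(X)$, whence $\lambda\notin\acc\,\sigma_{\W}(T)$.

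I expect the only real obstacle to be the closedness of the iterated ranges in (4)$\Longrightarrow$(5). In Theorem \ref{F-+} the analogous step forced the extra quasi-Fredholm hypothesis to reach statement (5); here, because B-Weyl automatically supplies finite essential ascent ($c_d'(T-\lambda I)<\infty$) on the upper side, Lemma \ref{B-poc} delivers the required closedness under the bare TUD hypothesis. Noticing and using this asymmetry between the upper and lower data is precisely what lets statement (5) join the equivalence without strengthening the assumption on $T-\lambda I$.
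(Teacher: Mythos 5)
Your proposal is correct and follows essentially the same route as the paper: the implication (4)$\Longrightarrow$(5) is exactly the paper's argument (Grabiner's theorem in the form \eqref{BW1} plus Lemma \ref{B-poc} to get closedness of $R((T-\lambda I)^n)$, $n\ge d$, from $c_d'(T-\lambda I)<\infty$), and your observation about the upper/lower asymmetry explaining why no quasi-Fredholm hypothesis is needed here, unlike in Theorem \ref{F-+}, matches the paper's structure. The only cosmetic difference is that for (5)$\Longrightarrow$(1) you inline the punctured-neighbourhood computation, whereas the paper simply cites Theorems \ref{F+-} and \ref{F-+}; the content is the same.
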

\begin{proof}  (4)$\Longrightarrow$(5) Suppose that $\lambda\notin \inter\, \sigma_{B\W}(T)$ and that
        $T-\lambda I$ has TUD for $n\ge d$. According to \cite[Theorem 4.7]{Grabiner}  there exists an $\epsilon>0$ such that for every $\mu\in\CC$ the  implication \eqref{BW1} holds.
   From $\lambda\notin \inter\, \sigma_{B\W}(T)$ it follows that there  exists $\mu\in\CC$ such that $0<|\mu-\lambda|<\epsilon$ and $T-\mu I$ is a B-Weyl operator. Therefore, for $n$ large enough we have that
    $c_n(T-\mu I)=\dim (R((T-\mu I)^n)/ R((T-\mu I)^{n+1}))<\infty $, $c_n^\prime(T-\mu I)=\dim (N(T-\mu I)\cap R((T-\mu I)^n))<\infty $ and $0=\ind(T-\mu I)=c_n^\prime(T-\mu I)-c_n(T-\mu I)$. According to \eqref{BW1} we obtain that $c_d(T-\lambda I)=c_d^\prime(T-\lambda I)<\infty$, that is $$\dim (N(T-\lambda I)\cap R((T-\lambda I)^{d}))=\dim ( R((T-\lambda I)^{d})/  R((T-\lambda I)^{d+1}))<\infty.$$
    It means that
      the restriction of $T-\lambda I$ to $R((T-\lambda I)^{d})$ is a Weyl operator.
     Therefore, $T-\lambda I$ is a B-Weyl operator.

The implication (5)$\Longrightarrow$(1) follows from Theorems \ref{F+-} and  \ref{F-+}.
\end{proof}

We need the following well-known results
 (see \cite{MbekhtaMuller}, \cite[Remark A (iii)]{berkani-3}, \cite[Proposition 3.1]{P7}, \cite[Corollary 1.3]{BKMO}, \cite[Corollary 2.5]{Fredj}, \cite[Theorem 4.7 and Corollary 4.8]{Grabiner}, \cite[Corollary 1.45]{Ai}).
\begin{proposition}\label{closed} For $T\in L(X)$ the set $\sigma_*(T)$ is compact, where $\sigma_*\in \{\sigma_D, \sigma_{LD}, \sigma_{LD}^e, \sigma_{B\W},\break \sigma_{B\Phi}, \sigma_{B\W_+}, \sigma_{dsc}, \sigma_{dsc}^e, \sigma_{RD}, \sigma_{RD}^e, \sigma_{B\W_-}, \sigma_{Kt}, \sigma_{q\Phi}, \sigma_{TUD}\}$.
\end{proposition}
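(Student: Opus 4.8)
\emph{Proof plan.} The strategy is to prove that each of these sets is bounded and closed. Boundedness is immediate: if $|\lambda|>\|T\|$ then $T-\lambda I$ is invertible, and an invertible operator belongs to every class in the list (it is Kato, quasi-Fredholm, has TUD with $d=0$, and has ascent and descent equal to $0$). Hence each $\sigma_*(T)\subseteq\sigma(T)$, which is compact, so each $\sigma_*(T)$ is bounded.

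For closedness I would show that every resolvent set $\rho_*(T)=\CC\setminus\sigma_*(T)$ is open, the single tool being Grabiner's punctured neighborhood theorem. Fix $\lambda_0\in\rho_*(T)$. In each of the fourteen cases membership of $T-\lambda_0 I$ in the corresponding class forces it to have TUD for some $n\ge d$: the semi-B-Fredholm, B-Weyl, Drazin and essentially Drazin classes are all quasi-Fredholm and hence have TUD, while finite descent and finite essential descent give TUD directly. Consequently \cite[Theorem 4.7 and Corollary 4.8]{Grabiner} yields an $\epsilon>0$ such that for $0<|\mu-\lambda_0|<\epsilon$ the constancy relations of \eqref{BW1} hold, namely $c_n(T-\mu I)=c_d(T-\lambda_0 I)$ and $c_n'(T-\mu I)=c_d'(T-\lambda_0 I)$ for all $n\ge 0$, and moreover $T-\mu I$ itself has TUD.

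It then remains to transfer, case by case, the defining property from $\lambda_0$ to the punctured disc. On that disc one has $\alpha(T-\mu I)=c_0'(T-\mu I)=c_d'(T-\lambda_0 I)$, $\beta(T-\mu I)=c_0(T-\mu I)=c_d(T-\lambda_0 I)$ and index $c_d'(T-\lambda_0 I)-c_d(T-\lambda_0 I)$, while $c_d'(T-\lambda_0 I)$ and $c_d(T-\lambda_0 I)$ are precisely the nullity and defect of the restriction defining the B-Fredholm data of $T-\lambda_0 I$. Thus finiteness of $c_d'(T-\lambda_0 I)$ (upper semi-B-Fredholm $=$ left essentially Drazin, $\sigma_{LD}^e=\sigma_{B\Phi_+}$) makes $T-\mu I$ upper semi-Fredholm; finiteness of $c_d(T-\lambda_0 I)$ (lower semi-B-Fredholm $=\sigma_{RD}^e=\sigma_{B\Phi_-}$, and finite essential descent) makes it lower semi-Fredholm; the index inequalities carry over unchanged to give the B-Weyl and, via Theorems \ref{F+-}, \ref{F-+} and \ref{W}, the semi-B-Weyl cases; and $c_d'(T-\lambda_0 I)=0$ (finite ascent) resp.\ $c_d(T-\lambda_0 I)=0$ (finite descent) forces $T-\mu I$ to be bounded below resp.\ surjective, covering $\sigma_{LD}$, $\sigma_{RD}$, $\sigma_D$ and $\sigma_{dsc}$. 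Since $\lambda_0$ itself lies in $\rho_*(T)$, the whole open disc $|\mu-\lambda_0|<\epsilon$ lies in $\rho_*(T)$; and for $\sigma_{TUD}$, $\sigma_{q\Phi}$ and $\sigma_{Kt}$ the needed TUD/quasi-Fredholm/Kato structure of $T-\mu I$ is supplied directly by \cite[Corollary 4.8]{Grabiner} (together with the classical Kato perturbation theorem and \cite{MbekhtaMuller} for the Kato-type case).

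I expect the two base spectra $\sigma_{q\Phi}$ and $\sigma_{Kt}$ to be the main obstacle: there $c_d(T-\lambda_0 I)$ and $c_d'(T-\lambda_0 I)$ may both be infinite, in which case the constancy \eqref{BW1} of $c_n,c_n'$ does \emph{not} by itself exhibit $T-\mu I$ as quasi-Fredholm, since one cannot conclude $k_n(T-\mu I)=0$ merely from equality of infinite codimensions; one must instead invoke the sharper part of Grabiner's theorem asserting that $T-\mu I$ is semiregular on the punctured disc. The only other delicate point is the transfer of closedness of the ranges of the restrictions, which is handled by Lemma \ref{B-poc} and \cite[Lemmas 7 and 12]{MbekhtaMuller}.
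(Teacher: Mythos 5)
Your proof is correct, but note that the paper does not actually prove this proposition at all: it is presented as a collection of well-known results and dispatched entirely by citations (\cite{MbekhtaMuller}, \cite[Remark A (iii)]{berkani-3}, \cite[Proposition 3.1]{P7}, \cite[Corollary 1.3]{BKMO}, \cite[Corollary 2.5]{Fredj}, \cite[Theorem 4.7 and Corollary 4.8]{Grabiner}, \cite[Corollary 1.45]{Ai}). What you supply instead is a unified argument: boundedness via $\sigma_*(T)\subset\sigma(T)$, and openness of each $\rho_*(T)=\CC\setminus\sigma_*(T)$ from a single application of Grabiner's punctured neighbourhood theorem followed by a case-by-case transfer. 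This matches in substance how the scattered references handle their individual cases, and it has the merit of being self-contained and of isolating the one genuinely delicate point, which you identify correctly: for $\sigma_{q\Phi}$ and $\sigma_{Kt}$ the constancy of the possibly infinite quantities $c_n$, $c_n'$ does not by itself yield $k_n(T-\mu I)=0$, and one must invoke the semi-regularity of $T-\mu I$ on the punctured disc (part of Grabiner's Theorem 4.7; for Kato-type operators it also follows from the classical Kato perturbation theorem) --- since a semi-regular operator is of Kato type, quasi-Fredholm and has TUD, all three base spectra are handled at once. Your appeal to Theorems \ref{F+-}, \ref{F-+} and \ref{W} for the semi-B-Weyl cases introduces no circularity, as those theorems precede Proposition \ref{closed} in the paper and do not use it; in fact they are not strictly needed, since the index transfer $\ind(T-\mu I)=c_d'(T-\lambda_0 I)-c_d(T-\lambda_0 I)$ you describe already settles those cases directly (with the harmless convention that one of $c_d'$, $c_d$ may be infinite, the other being finite in each B-Weyl case). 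What the paper's approach buys is brevity; what yours buys is a uniform, verifiable proof resting on one theorem.
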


\begin{corollary}\label{cor-W}  Let $T\in L(X)$. Then

 \begin{itemize}

 \item[(1)] \quad $\sigma_{B\W_+}(T)=\sigma_{TUD}(T)\cup\inter\, \sigma_{*}(T)=\sigma_{TUD}(T)\cup\acc\, \sigma_{*}(T)$,

\quad  for $\sigma_{*}\in\{\sigma_{\W_+}, \sigma_{B\W_+}\}$;

 \item[(2)] \quad $\sigma_{B\W_-}(T)=\sigma_{q\Phi}(T)\cup\inter\, \sigma_{*}(T)=\sigma_{q\Phi}(T)\cup\acc\, \sigma_{*}(T)$;

 \quad  for $\sigma_{*}\in\{\sigma_{\W_-}, \sigma_{B\W_-}\}$;

\item[(3)] \quad $\sigma_{B\W}(T)=\sigma_{TUD}(T)\cup\inter\, \sigma_{*}(T)=\sigma_{TUD}(T)\cup\acc\, \sigma_{*}(T)$;

 \quad  for $\sigma_{*}\in\{\sigma_{\W}, \sigma_{B\W}\}$.

\item[(4)] \quad $\inter\, \sigma_{\W_*}(T)=\inter\, \sigma_{B\W_*}(T)$,\quad for $\W_*\in\{\W_+,\W_-, \W\}$;

\item[(5)] \quad $\p\, \sigma_{B\W_*}(T)\subset \p\, \sigma_{\W_*}(T)$,\quad for $\W_*\in\{\W_+,\W_-, \W\}$;

\item[(6)] \quad $\sigma_{\W_+}(T)\setminus \sigma_{B\W_+}(T)=(\iso\, \sigma_{\W_+}(T))\setminus\sigma_{TUD}(T)$,

\quad $\sigma_{\W_-}(T)\setminus \sigma_{B\W_-}(T)=(\iso\, \sigma_{\W_-}(T))\setminus\sigma_{qF}(T)$,

\quad $\sigma_{\W}(T)\setminus \sigma_{B\W}(T)=(\iso\, \sigma_{\W}(T))\setminus\sigma_{TUD}(T)$;

  \item[(7)] \quad  $\sigma_{\W_*}(T)\setminus \sigma_{B\W_*}(T)$, where $\W_*\in\{\W_+,\W_-, \W\}$, consists of at most countably many isolated points.

\end{itemize}
\end{corollary}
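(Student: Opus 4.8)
The plan is to read off every identity in the corollary from the pointwise equivalences of Theorems \ref{F+-}, \ref{F-+} and \ref{W}, applied as $\mu$ ranges over the TUD-resolvent $\CC\setminus\sigma_{TUD}(T)$ or the quasi-Fredholm resolvent $\CC\setminus\sigma_{q\Phi}(T)$. First I would record three containments: $\sigma_{TUD}(T)\subset\sigma_{B\W_+}(T)$, $\sigma_{TUD}(T)\subset\sigma_{B\W}(T)$ and $\sigma_{q\Phi}(T)\subset\sigma_{B\W_-}(T)$. Each is proved by contraposition: if $T-\lambda I$ is upper semi-B-Weyl (resp. B-Weyl, lower semi-B-Weyl) then it is semi-B-Fredholm, hence quasi-Fredholm, so it has TUD; thus $\lambda$ lies outside $\sigma_{TUD}(T)$ (resp. $\sigma_{q\Phi}(T)$). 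This is precisely what explains why $\sigma_{TUD}$ appears in (1) and (3) but $\sigma_{q\Phi}$ in (2).

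For (1) I would prove $\sigma_{B\W_+}(T)=\sigma_{TUD}(T)\cup\acc\,\sigma_{*}(T)$ by two inclusions, with $\sigma_*\in\{\sigma_{\W_+},\sigma_{B\W_+}\}$. For $\supseteq$ the first summand is the containment above, while for the second I take $\lambda\notin\sigma_{B\W_+}(T)$; then $\lambda\notin\sigma_{TUD}(T)$, so Theorem \ref{F+-} applies and (5) holds, hence (1) and (3) hold, giving $\lambda\notin\acc\,\sigma_*(T)$. For $\subseteq$ I take $\lambda\in\sigma_{B\W_+}(T)\setminus\sigma_{TUD}(T)$; then (5) fails, so (1) and (3) fail, i.e. $\lambda\in\acc\,\sigma_*(T)$. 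The $\inter$-form uses statements (2) and (4) in place of (1) and (3). Part (3) is verbatim the same using Theorem \ref{W} over $\CC\setminus\sigma_{TUD}(T)$, and part (2) is the same using Theorem \ref{F-+} over $\CC\setminus\sigma_{q\Phi}(T)$, where the ``In particular'' clause of that theorem makes statement (5) available; points that have TUD but are not quasi-Fredholm are harmlessly absorbed into $\sigma_{q\Phi}(T)$ on both sides, so the identity is unaffected.

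Part (4) is the only genuinely topological step: from the $\inter$-form of (1)--(3) I get $\inter\,\sigma_{\W_*}(T)\subset\sigma_{B\W_*}(T)$, and since $\inter\,\sigma_{\W_*}(T)$ is open and contained in $\sigma_{B\W_*}(T)$ it lies inside $\inter\,\sigma_{B\W_*}(T)$; the reverse inclusion is immediate from $\sigma_{B\W_*}(T)\subset\sigma_{\W_*}(T)$. For (5), by Proposition \ref{closed} (and the classical compactness of $\sigma_{\W_*}(T)$) all spectra involved are closed, so I may write boundaries as set minus interior and conclude
\[\partial\,\sigma_{B\W_*}(T)=\sigma_{B\W_*}(T)\setminus\inter\,\sigma_{B\W_*}(T)\subset\sigma_{\W_*}(T)\setminus\inter\,\sigma_{\W_*}(T)=\partial\,\sigma_{\W_*}(T),\]
the middle inclusion using $\sigma_{B\W_*}(T)\subset\sigma_{\W_*}(T)$ together with the equality of interiors from (4).

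Finally, for (6) I substitute the $\acc$-formula from (1) (resp. (2), (3), with $\sigma_{qF}=\sigma_{q\Phi}$) into $\sigma_{\W_*}(T)\setminus\sigma_{B\W_*}(T)$ and use the set identity $K\setminus(A\cup B)=(K\setminus B)\setminus A$ with $\sigma_{\W_*}(T)\setminus\acc\,\sigma_{\W_*}(T)=\iso\,\sigma_{\W_*}(T)$ to obtain $(\iso\,\sigma_{\W_*}(T))\setminus\sigma_{TUD}(T)$ (resp. $\setminus\sigma_{q\Phi}(T)$). Part (7) is then immediate: the difference set is contained in $\iso\,\sigma_{\W_*}(T)$, whose points are isolated and which is at most countable, being the set of isolated points of a subset of the second-countable space $\CC$. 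I expect no serious obstacle; the only care needed is the semi-B-Fredholm $\Rightarrow$ TUD/quasi-Fredholm step at the outset and the bookkeeping in part (2).
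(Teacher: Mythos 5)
Your proposal is correct and takes essentially the same route as the paper: parts (1)--(3) are read off the pointwise equivalences of Theorems \ref{F+-}, \ref{F-+} and \ref{W} (with the quasi-Fredholm clause of Theorem \ref{F-+} accounting for the appearance of $\sigma_{q\Phi}$ in (2)), and (4)--(7) then follow by elementary topology of closed sets exactly as in the paper's proof. Your explicit containments $\sigma_{TUD}(T)\subset\sigma_{B\W_+}(T)$, $\sigma_{q\Phi}(T)\subset\sigma_{B\W_-}(T)$ and the set-algebra derivation of (6) merely make explicit what the paper leaves implicit in its element-wise argument.
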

\begin{proof} (1) Let $\sigma_*\in\{\sigma_{\W_+}, \sigma_{B\W_+}\}$.  From Theorem \ref{F+-} it follows that $
T-\lambda I\ {\rm is\ upper\ semi-Weyl}$  if and only if $
T-\lambda I\ {\rm has\  TUD}$ and $\lambda$ is not an interior point of $\sigma_*(T)$, that is there is  the following equality:
\begin{equation}\label{zamena}
  \sigma_{B\W_+}(T)=\sigma_{TUD}(T)\cup\inter\, \sigma_*(T).
\end{equation}

Also from Theorem \ref{F+-} it follows that $T-\lambda I\ {\rm is\ upper\ semi-Weyl}$ if and only if $
T-\lambda I\ {\rm has\  TUD}$ and $\lambda$ is not an accumulation  point of $\sigma_*(T)$,
which implies that $\sigma_{B\W_+}(T)=\sigma_{TUD}(T)\cup\acc\, \sigma_*(T)$.

The  equalities in (2) and (3) follow from Theorems    \ref{F-+} and \ref{W}, respectively.

(4) For $\W_*\in\{\W_+,\W_-, \W\}$, from (1), (2) and (3)  it follows that $\inter \, \sigma_{\W_*}(T)\subset \sigma_{B\W_*}(T)$ and hence, $\inter \, \sigma_{\W_*}(T)\subset \inter\, \sigma_{B\W_*}(T)$. The converse  inclusion follows from the inclusion $\sigma_{B\W_*}(T)\subset \sigma_{\W_*}(T)$.

(5) Since $\sigma_{B\W_*}(T)$ is closed (Proposition \ref{closed}), we have that $\p\, \sigma_{B\W_*}(T)\subset \sigma_{B\W_*}(T)$. As $\sigma_{B\W_*}(T)\subset\sigma_{\W_*}(T)$ and $\sigma_{\W_*}(T)=\p\, \sigma_{\W_*}(T)\cup\inter\, \sigma_{\W_*}(T)$ since $\sigma_{\W_*}(T)$ is also closed,  from (4) it follows that  $\p\, \sigma_{B\W_*}(T)\subset  \p\, \sigma_{\W_*}(T)$.

(6) Let $\lambda\in \sigma_{\W_+}(T)\setminus \sigma_{B\W_+}(T)$. From  (1) we get that $\lambda\notin\acc\, \sigma_{\W_+}(T)$ and hence, $\lambda\in\iso \, \sigma_{\W_+}(T)$. As $\lambda\notin \sigma_{B\W_+}(T)$, it follows that $\lambda\notin \sigma_{TUD}(T)$ and so, $\lambda\in (\iso\, \sigma_{\W_+}(T))\setminus\sigma_{TUD}(T)$.

Suppose that $\lambda\in(\iso\, \sigma_{\W_+}(T))\setminus\sigma_{TUD}(T)$. Then $\lambda\in \sigma_{\W_+}(T)$, $\lambda\notin\acc \, \sigma_{\W_+}(T)$ and $T-\lambda I$ has TUD. According to Theorem \ref{F+-}
we get that $T-\lambda I$ is upper semi-B-Weyl and thus, $\lambda\in\sigma_{\W_+}(T)\setminus \sigma_{B\W_+}(T)$.

The rest of equalities can be proved similarly.

(7) follows from (6).
\end{proof}

In the following theorem we  characterize left essentially Drazin invertible operators, that is,  upper semi-B-Fredholm operators.

\begin{theorem}\label{SF+} Let $\lambda\in\CC$,  $T\in L(X)$ and let $T-\lambda I$ have TUD  for $n\ge d$. Then the following statements are equivalent:

\begin{itemize}

\item[(1)] $\sigma_{\Phi_+}(T)$ does not cluster at $\lambda$;

\item[(2)] $\lambda$ is not an interior point of $\sigma_{\Phi_+}(T)$;

\item[(3)] $\sigma_{LD}^e(T)$ does not cluster at $\lambda$;

\item[(4)] $\lambda$ is not an interior point of $\sigma_{LD}^e(T)$;

\item[(5)] $a_e(T-\lambda I)<\infty$;

\item[(6)] $T-\lambda I$ is left essentially Drazin invertible.

\end{itemize}
\end{theorem}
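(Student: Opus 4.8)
The plan is to prove all six statements equivalent through one cycle of implications, mirroring the proof of Theorem \ref{F+-} but with no index condition to carry along, which makes the bookkeeping lighter. First I would dispose of the routine implications. The implications (1)$\Longrightarrow$(2) and (3)$\Longrightarrow$(4) are immediate, since an interior point of a set is always an accumulation point of that set. The implications (1)$\Longrightarrow$(3) and (2)$\Longrightarrow$(4) I would deduce from the inclusion $\sigma_{LD}^e(T)\subset\sigma_{\Phi_+}(T)$: if $T$ is upper semi-Fredholm then $\alpha(T)<\infty$, so $c_0^\prime(T)<\infty$ and $a_e(T)=0$ while $R(T)$ is closed, whence $T\in LD^e(X)$; thus $\Phi_+(X)\subset LD^e(X)$ and the inclusion of spectra follows. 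Finally, (5)$\Longleftrightarrow$(6) is exactly Lemma \ref{B-poc1}(1), as $T-\lambda I$ is assumed to have TUD.

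It then remains to prove (4)$\Longrightarrow$(5) and (6)$\Longrightarrow$(1), which together close the cycle. For (4)$\Longrightarrow$(5) I would invoke Grabiner's punctured neighborhood theorem \cite[Theorem 4.7]{Grabiner} for the TUD operator $T-\lambda I$ to produce $\epsilon>0$ for which the constancy relation \eqref{BW1} holds, then choose, using $\lambda\notin\inter\,\sigma_{LD}^e(T)$, a point $\mu$ with $0<|\mu-\lambda|<\epsilon$ and $T-\mu I\in LD^e(X)$. Since then $a_e(T-\mu I)<\infty$, some $c_n^\prime(T-\mu I)$ is finite, and \eqref{BW1} reads off $c_d^\prime(T-\lambda I)=c_n^\prime(T-\mu I)<\infty$, giving $a_e(T-\lambda I)\le d<\infty$, which is (5).

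For (6)$\Longrightarrow$(1), starting from $T-\lambda I\in LD^e(X)$ with $a_e(T-\lambda I)<\infty$, I would first note that the uniform descent condition $k_n(T-\lambda I)=0$ for $n\ge d$ makes the spaces $N(T-\lambda I)\cap R((T-\lambda I)^n)$ equal for all $n\ge d$, so $c_n^\prime(T-\lambda I)$ is constant for $n\ge d$; taking $n\ge d$ large enough that $c_n^\prime(T-\lambda I)<\infty$, possible since $a_e(T-\lambda I)<\infty$, then forces $c_d^\prime(T-\lambda I)<\infty$. Then \eqref{BW1} yields $\alpha(T-\mu I)=c_0^\prime(T-\mu I)=c_d^\prime(T-\lambda I)<\infty$ for every $\mu$ in the punctured disc, and since $R(T-\mu I)$ is closed there by \cite[Theorem 4.7]{Grabiner}, each such $T-\mu I$ is upper semi-Fredholm, so $\sigma_{\Phi_+}(T)$ does not cluster at $\lambda$. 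The main obstacle is exactly this two-way transfer of finiteness of the gap numbers across the punctured neighborhood: Grabiner's theorem is the lever that moves it, and the TUD hypothesis is what lets me read finiteness of $c_d^\prime(T-\lambda I)$ at the center $\lambda$ off finiteness at a nearby point, and conversely. The one point requiring care is certifying $c_d^\prime(T-\lambda I)<\infty$ in the last implication, since $a_e(T-\lambda I)<\infty$ on its own guarantees finiteness only of the tail of the decreasing sequence $(c_n^\prime)$, and it is the constancy for $n\ge d$ that pulls this back to the index $d$.
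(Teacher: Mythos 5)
Your proof is correct and follows essentially the same route as the paper: the same trivial implications, the same use of the spectral inclusion $\sigma_{LD}^e(T)\subset\sigma_{\Phi_+}(T)$, Lemma \ref{B-poc1}(1) for (5)$\Leftrightarrow$(6), and Grabiner's punctured neighborhood theorem with the constancy relation \eqref{BW1} for (4)$\Longrightarrow$(5). The only cosmetic difference is in the closing implication, where the paper simply cites \cite[Corollary 4.8(f)]{Grabiner} while you rederive it from \cite[Theorem 4.7]{Grabiner} via the constancy of $c_n^\prime$ for $n\ge d$ (exactly as the paper does in Theorem \ref{F+-}(5)$\Longrightarrow$(1)); your extra care in pulling finiteness of the tail of $(c_n^\prime)$ back to the index $d$ is sound.
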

\begin{proof}

(1)$\Longrightarrow$(2), (3)$\Longrightarrow$(4) Obvious.

(1)$\Longrightarrow$(3), (2)$\Longrightarrow$(4) It follows from the inclusions $\sigma_{LD}^e(T)\subset \sigma_{\Phi_+}(T)$.

(4)$\Longrightarrow$(5) Suppose that $\lambda$ is not an interior point of $\sigma_{LD}^e(T)$.  Since $T-\lambda I$ has TUD for $n\ge d$, according to \cite[Theorem 4.7]{Grabiner}  there exists an $\epsilon>0$ such that if $0<|\lambda-\mu|<\epsilon$ we have that
\begin{equation}\label{fgf}
  c_n^\prime(T-\mu I)=c_d^\prime(T-\lambda I)\ {\rm for\ all\ }n\ge 0.
\end{equation}
  Since $\lambda\notin \inter\, \sigma_{LD}^e(T)$, there is $\mu\in \CC$ such that $0<|\mu-\lambda|<\epsilon$ and $T-\mu I$ is left essentially Drazin invertible. Thus  $a_e(T-\mu I)<\infty$, which implies that $c_n^\prime(T-\mu I)<\infty$ for some $n\in\NN_0$. According to \eqref{fgf} we conclude that $c_d^\prime(T-\lambda I)<\infty$ and hence $a_e(T-\lambda I)\le d$.

(5)$\Longrightarrow$(6) It follows from Lemma  \ref{B-poc1} (1).

(6)$\Longrightarrow$(5) It is obvious.

(5)$\Longrightarrow$(1) Let $a_e(T-\lambda I)<\infty$. Since $T-\lambda I$ has TUD, from  \cite[Corolary 4.8 (f)]{Grabiner} we get that there is an $\epsilon>0$ such that for every   $\mu\in\CC$, from $0<|\lambda-\mu|<\epsilon$ it follows that $T-\mu I$ is upper semi-Fredholm. This means that $\lambda$ is not an accumulation points of  $\sigma_{\Phi_+}(T)$.
\end{proof}
%
%
%
%
%
%
%
%
%
%

We need the following result.
\begin{proposition}\label{prJ+}\cite[Proposition 3.4]{P8}  Let $T\in L(X)$. Then

\begin{itemize}
\item[(1)]
 $T$ is quasi-Fredholm and $\delta(T)<\infty$  $\Longleftrightarrow$  $T$ is right Drazin invertible.

\item[(2)] $T$ is quasi-Fredholm and $\delta_e(T)<\infty$ $\Longleftrightarrow$ $T$ is right essentially Drazin invertible.

\end{itemize}
\end{proposition}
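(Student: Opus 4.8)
The plan is to deduce both equivalences from the interplay between the stabilization of the sequence $k_n(T)$ and the finiteness of (essential) descent, using the identity $k_n(T)=c_n(T)-c_{n+1}(T)$ (valid as soon as $c_{n+1}(T)<\infty$) recalled in the introduction. For part (1), the easy direction is $T$ right Drazin invertible $\Longrightarrow$ ($T$ quasi-Fredholm and $\delta(T)<\infty$): if $\delta(T)=p<\infty$ and $R(T^p)$ is closed, then $R(T^n)=R(T^p)$ for all $n\ge p$, so $c_n(T)=0$ and hence $k_n(T)=0$ for $n\ge p$, while $R(T^{p+1})=R(T^p)$ is closed; taking $d=p$ shows $T$ is quasi-Fredholm. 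For the converse, assume $T$ is quasi-Fredholm with $k_n(T)=0$ for $n\ge d$ and $R(T^{d+1})$ closed, and $\delta(T)=p<\infty$. First I would show $p\le d$: when $p\ge 1$ one has $c_{p-1}(T)>0$ and $c_p(T)=0$, so $k_{p-1}(T)=c_{p-1}(T)-c_p(T)=c_{p-1}(T)\ne 0$, which forces $p-1<d$. Consequently $R(T^{\delta(T)})=R(T^{d+1})$, closed by hypothesis, so $T\in RD(X)$.

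Part (2) follows the same scheme, but now the ranges no longer stabilize, so closedness of the single space $R(T^{\delta_e(T)})$ has to be argued separately. For $T\in RD^e(X)\Longrightarrow$ ($T$ quasi-Fredholm and $\delta_e(T)<\infty$): writing $\delta_e(T)=p$, the sequence $(c_n(T))_{n\ge p}$ is a decreasing sequence of nonnegative integers, hence eventually constant, say $c_n(T)=c_{n+1}(T)$, i.e. $k_n(T)=0$, for all $n\ge d$ with $d\ge p$; closedness of $R(T^p)$ propagates to every $R(T^n)$, $n\ge p$, by the range-closedness results for finite essential descent \cite[Corollary 2.5]{Fredj}, \cite[Corollary 1.3]{BKMO}, so in particular $R(T^{d+1})$ is closed and $T$ is quasi-Fredholm. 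For the converse, suppose $T$ is quasi-Fredholm with parameter $d$ and $\delta_e(T)=p<\infty$. Since $T$ is quasi-Fredholm, $R(T^n)$ is closed for all $n\ge d$ \cite{MbekhtaMuller}, \cite[Corollary 4.8]{Grabiner}; setting $m=\max(d,p)$, the range $R(T^m)$ is closed, and because $c_k(T)<\infty$ for every $k\ge p$, repeated application of the step-down lemma \cite[Lemma 12]{MbekhtaMuller} transfers closedness from level $m$ down to level $p$, yielding that $R(T^{\delta_e(T)})$ is closed; hence $T\in RD^e(X)$.

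The main obstacle is exactly this last point in part (2): establishing closedness of the specific range $R(T^{\delta_e(T)})$. For ordinary descent it is automatic, since the ranges stabilize and the required space coincides with the already-closed $R(T^{d+1})$, whereas under finite essential descent the ranges only acquire finite codimension, so closedness must be transported between different powers. I expect the two ingredients that make this work to be the upward propagation of range-closedness under finite essential descent and the one-step downward transfer of \cite[Lemma 12]{MbekhtaMuller}, which is available precisely at the indices $k\ge \delta_e(T)$ where $c_k(T)<\infty$; verifying that these apply at every intermediate index between $\delta_e(T)$ and $\max(d,\delta_e(T))$ is the only delicate part of the argument.
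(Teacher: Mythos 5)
Your proof is mathematically sound, but note first that the paper itself offers no argument to compare against: Proposition \ref{prJ+} is imported verbatim from \cite[Proposition 3.4]{P8}, so the only ``paper proof'' is a citation to Berkani's Studia Mathematica article. Judged on its own merits, your argument is the natural one and is correct in both parts. In (1), the key observation that $\delta(T)=p\ge 1$ forces $k_{p-1}(T)=c_{p-1}(T)-c_p(T)=c_{p-1}(T)\neq 0$ (legitimate since $c_p(T)=0<\infty$), hence $p\le d$ and $R(T^{\delta(T)})=R(T^{d+1})$, is exactly the right mechanism, and the trivial case $p=0$ causes no trouble since $R(T^0)=X$. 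In (2), both transfer steps you isolate are genuine and correctly deployed: downward transfer of closedness at indices $k\ge\delta_e(T)$ via \cite[Lemma 12]{MbekhtaMuller} is precisely how the paper itself argues inside Theorem \ref{F-+}, and upward propagation under finite essential descent holds because $R(T^{n+1})$ is an operator range of finite codimension $c_n(T)$ in the Banach space $R(T^n)$, and a finite-codimensional operator range is automatically closed --- so you could make that step elementary rather than leaning on \cite[Corollary 2.5]{Fredj} or \cite[Corollary 1.3]{BKMO}. Two citation slips worth fixing, neither fatal: \cite[Corollary 4.8]{Grabiner} concerns perturbations $T-\mu I$, not closedness of ranges of powers; the fact that a quasi-Fredholm operator of degree $d$ has $R(T^n)$ closed for \emph{all} $n\ge d$ (the definition only posits closedness of $R(T^{d+1})$, and since the $c_n(T)$ may be infinite you cannot get this from finite codimension) should be attributed to \cite[Theorem 3.2]{Grabiner} or the corresponding result in \cite{MbekhtaMuller}. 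With those references repaired, your proof is complete and essentially reconstructs the standard argument behind Berkani's original proposition.
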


%

%

\bex \label{primer} Let $H$ be a
Hilbert space with an orthonormal basis
$\{e_{ij}\}^{\infty}_{i,j=1}$ and let the operator T defined by:\\
 $$
 Te_{i,j}=
\left\{
\begin{tabular}{ll}
$ 0 $  if  $ j = 1   ,$\\
 $\frac{1}{i}e_{i,1}$, if  $j=2$\\
$ e_{i, j-1},$  otherwise \\
\end{tabular}
\right.
$$

 It is easily seen   that  $R(T) =
R(T^{2})$  and $R(T)$ is not closed. Hence $R(T^n)$ is not closed
for  all $ n\geq 1$ and so  $T$ is neither a right Drazin invertible operator nor a right essentially Drazin invertible operator.
However, since $R(T) = R(T^2),$ then $T$ has uniform
descent for $n\geq 1 $ and $N(T) + R(T) = X $. Hence  $N(T) +
R(T)$ is closed and from \cite[Theorem 3.2]{Grabiner}  it follows that
$ T$ has TUD for $ n \geq 1.$ We remark that   finite descent   or  finite essential descent of a bounded  operator imply that it has TUD but does not imply closeness of ranges of its powers.  So, $T$ is an operator with $\delta(T)=\delta_e(T)<\infty$ which hence has TUD, but $T$ is neither right Drazin invertible nor right essentially Drazin invertible and this  shows that the condition that $T$ is quasi-Fredholm in the assertions (1) and (2) in Proposition \ref{prJ+} can neither be omitted nor
replaced by a weaker condition that $T$  has TUD.

\eex

In the following theorem we give some characterizations of right essentially Drazin invertible, that is, lower semi-B-Fredholm operators.

\begin{theorem}\label{SF-} Let $\lambda\in\CC$,  $T\in L(X)$ and let $T-\lambda I$ have TUD  for $n\ge d$. Then the following statements are equivalent:

\begin{itemize}

\item[(1)] $\sigma_{\Phi_-}(T)$ does not cluster at $\lambda$;

\item[(2)] $\lambda$ is not an interior point of $\sigma_{\Phi_-}(T)$;

\item[(3)] $\sigma_{RD}^e(T)$ does not cluster at $\lambda$;

\item[(4)] $\lambda$ is not an interior point of $\sigma_{RD}^e(T)$;

\item[(5)] $\sigma_{dsc}^e(T)$ does not cluster at $\lambda$;

\item[(6)] $\lambda$ is not an interior point of $\sigma_{dsc}^e(T)$;

\item[(7)] $\delta_e(T-\lambda I)<\infty$.

In particular, if $T-\lambda I$ is quasi-Fredholm then the statements (1)-(7) are equivalent to the following satement:

\item[(8)] $T-\lambda I$ is right essentially Drazin invertible.


\end{itemize}
\end{theorem}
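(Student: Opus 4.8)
The plan is to mirror the argument for Theorem~\ref{SF+}, now working on the descent side and again driving everything through Grabiner's punctured neighborhood theorem in the form \eqref{BW1}. First I would record the chain of inclusions $\sigma_{dsc}^e(T)\subset\sigma_{RD}^e(T)\subset\sigma_{\Phi_-}(T)$: indeed $\beta(S)<\infty$ gives $c_0(S)=\beta(S)<\infty$, hence $\delta_e(S)=0$ with $R(S^0)=X$ closed, so $\Phi_-(X)\subset RD^e(X)$, while right essential Drazin invertibility trivially gives $\delta_e<\infty$. These inclusions yield at once the implications (1)$\Rightarrow$(3)$\Rightarrow$(5) and (2)$\Rightarrow$(4)$\Rightarrow$(6) (a smaller set cannot cluster at $\lambda$, nor have $\lambda$ as an interior point, where the larger one does not), and (1)$\Rightarrow$(2), (3)$\Rightarrow$(4), (5)$\Rightarrow$(6) are immediate since failure to cluster implies $\lambda$ is not an interior point.

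The analytic heart is the pair of implications (6)$\Rightarrow$(7) and (7)$\Rightarrow$(1), both obtained from \eqref{BW1}. Since $T-\lambda I$ has TUD for $n\ge d$, fix $\epsilon>0$ so that \eqref{BW1} holds. For (6)$\Rightarrow$(7): if $\lambda\notin\inter\,\sigma_{dsc}^e(T)$ there is a point in the disc $|\mu-\lambda|<\epsilon$ lying outside $\sigma_{dsc}^e(T)$; if it is $\lambda$ itself we are done, otherwise it is some $\mu$ with $0<|\mu-\lambda|<\epsilon$ and $\delta_e(T-\mu I)<\infty$, so some $c_n(T-\mu I)<\infty$, and since \eqref{BW1} forces every $c_n(T-\mu I)$ to equal $c_d(T-\lambda I)$, we get $c_d(T-\lambda I)<\infty$, i.e. $\delta_e(T-\lambda I)\le d$. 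For (7)$\Rightarrow$(1): from $\delta_e(T-\lambda I)<\infty$ together with uniform descent $k_n(T-\lambda I)=0$ for $n\ge d$ one deduces $c_d(T-\lambda I)<\infty$, since the decreasing, eventually finite sequence $c_n(T-\lambda I)$ is constant for $n\ge d$ through the relation $k_n=c_n-c_{n+1}$ (downward induction from a large index where $c_n$ is finite); then \eqref{BW1} gives $\beta(T-\mu I)=c_0(T-\mu I)=c_d(T-\lambda I)<\infty$ for every $\mu$ with $0<|\mu-\lambda|<\epsilon$, so each such $T-\mu I$ is lower semi-Fredholm and $\lambda\notin\acc\,\sigma_{\Phi_-}(T)$. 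Together with the easy implications this closes a cycle through (1),(2),(4),(6),(7), and adjoining (1)$\Rightarrow$(3)$\Rightarrow$(5)$\Rightarrow$(6) makes (1)--(7) equivalent.

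It remains to incorporate (8) under the extra hypothesis that $T-\lambda I$ is quasi-Fredholm, and here lies the only genuine subtlety. The equivalence (7)$\Leftrightarrow$(8) is exactly Proposition~\ref{prJ+}(2) applied to $T-\lambda I$: quasi-Fredholmness together with $\delta_e(T-\lambda I)<\infty$ is precisely right essential Drazin invertibility, while (8)$\Rightarrow$(7) is built into the definition. I expect the main obstacle to be explaining why TUD alone does not suffice for (8), in contrast to the ascent side of Theorem~\ref{SF+}: there Lemma~\ref{B-poc} manufactures closed ranges of the powers out of TUD and finite essential ascent, but the descent analog fails, and Example~\ref{primer} exhibits an operator with $\delta_e<\infty$ having TUD whose powers have non-closed ranges, hence which is not right essentially Drazin invertible. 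Thus the closedness of $R((T-\lambda I)^{\delta_e(T-\lambda I)})$ required by (8) cannot be extracted from TUD and must be supplied by quasi-Fredholmness through Proposition~\ref{prJ+}.
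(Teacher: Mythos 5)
Your proposal is correct and follows essentially the same route as the paper: the inclusions $\sigma_{dsc}^e(T)\subset\sigma_{RD}^e(T)\subset\sigma_{\Phi_-}(T)$ for the easy implications, Grabiner's punctured neighborhood theorem in the form \eqref{BW1} for (6)$\Rightarrow$(7) and (7)$\Rightarrow$(1), and Proposition~\ref{prJ+}(2) for (7)$\Leftrightarrow$(8) under quasi-Fredholmness. The only cosmetic difference is that for (7)$\Rightarrow$(1) you re-derive inline (via $k_n=c_n-c_{n+1}$ and downward induction) what the paper simply cites as \cite[Corollary 4.8 (g)]{Grabiner}, and your closing remark on why TUD cannot replace quasi-Fredholmness matches the paper's Example~\ref{primer} and Remark~\ref{remark1}.
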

\begin{proof}
(1)$\Longrightarrow$(2), (3)$\Longrightarrow$(4), (5)$\Longrightarrow$(6) Obvious.

(1)$\Longrightarrow$(3)$\Longrightarrow$(5), (2)$\Longrightarrow$(4)$\Longrightarrow$(6) It follows from the inclusions $\sigma_{dsc}^e(T)\subset\sigma_{RD}^e(T)\subset \sigma_{\Phi_-}(T)$.

(6)$\Longrightarrow$(7) Suppose that $\lambda$ is not an interior point of $\sigma_{dsc}^e(T)$. Since $T-\lambda I$ has TUD  for $n\ge d$, by  \cite[Theorem 4.7]{Grabiner}  there exists an $\epsilon>0$ such that for every   $\mu\in\CC$, from $0<|\lambda-\mu|<\epsilon$ it follows that  $c_n(T-\mu I)=c_d(T-\lambda I)$ for all $n\ge 0$. Since $\lambda\notin \inter\, \sigma_{dsc}^e(T)$, there is $\mu\in \CC$ such that $0<|\mu-\lambda|<\epsilon$ and $\delta_e(T-\mu I)<\infty$. This implies that $c_d(T-\lambda I)<\infty$ and hence $\delta_e(T-\lambda I)\le d$.

(7)$\Longrightarrow$(1) Let $\delta_e(T-\lambda I)<\infty$. Then $T-\lambda I$ has TUD and from  \cite[Corolary 4.8 (g)]{Grabiner} it follows that there is an $\epsilon>0$ such that if $0<|\lambda-\mu|<\epsilon$ we have that $T-\mu I$ is lower semi-Fredholm. This means that $\lambda$ is not an accumulation points of  $\sigma_{\Phi_-}(T)$.

Under assumption that $T-\lambda I$ is quasi-Fredholm, the equivalence  (7)$\Longleftrightarrow$(8)  follows from  Proposition \ref{prJ+} (2).
\end{proof}


%
%
%
%
%
%

\begin{theorem}\label{F} Let $\lambda\in\CC$,  $T\in L(X)$ and let $T-\lambda I$ have TUD  for $n\ge d$. Then the following statements are equivalent:
\begin{itemize}
\item[(1)] $\sigma_{\Phi}(T)$ does not cluster at $\lambda$;

\item[(2)] $\lambda$ is not an interior point of $\sigma_{\Phi}(T)$;

\item[(3)] $\sigma_{B\Phi}(T)$ does not cluster at $\lambda$;

\item[(4)] $\lambda$ is not an interior point of $\sigma_{B\Phi}(T)$;

\item[(5)] $T-\lambda I$ is a B-Fredholm operator.

\end{itemize}
\end{theorem}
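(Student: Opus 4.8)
The plan is to mirror the structure of Theorems \ref{F+-}, \ref{F-+}, and \ref{W}, since the B-Fredholm case is the ``no index constraint'' version of the B-Weyl case and should follow the same pattern. The routine implications (1)$\Longrightarrow$(2), (3)$\Longrightarrow$(4) are clear, while (1)$\Longrightarrow$(3) and (2)$\Longrightarrow$(4) follow immediately from the inclusion $\sigma_{B\Phi}(T)\subset\sigma_{\Phi}(T)$, which holds because every upper (or lower) semi-Fredholm operator is in particular B-Fredholm with $n=0$. The substantive content lies in (4)$\Longrightarrow$(5) and (5)$\Longrightarrow$(1), exactly as before.

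For the implication (4)$\Longrightarrow$(5), I would invoke Grabiner's punctured neighborhood theorem \cite[Theorem 4.7]{Grabiner}: since $T-\lambda I$ has TUD for $n\ge d$, there is an $\epsilon>0$ so that the implication \eqref{BW1} holds, giving $c_n(T-\mu I)=c_d(T-\lambda I)$ and $c_n'(T-\mu I)=c_d'(T-\lambda I)$ for all $n\ge 0$ whenever $0<|\mu-\lambda|<\epsilon$. Assuming $\lambda\notin\inter\,\sigma_{B\Phi}(T)$, I can pick such a $\mu$ with $T-\mu I$ a B-Fredholm operator. Then for $n$ large both $c_n(T-\mu I)$ and $c_n'(T-\mu I)$ are finite, so by \eqref{BW1} both $c_d(T-\lambda I)$ and $c_d'(T-\lambda I)$ are finite. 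The key point is that one does not need the index computation at all here, only finiteness of both quantities. As in Theorem \ref{F+-}, finiteness of $c_d'(T-\lambda I)$ together with Lemma \ref{B-poc} yields closedness of $R((T-\lambda I)^d)$ and $R((T-\lambda I)^{d+1})$, and then $\dim(N(T-\lambda I)\cap R((T-\lambda I)^d))=c_d'(T-\lambda I)<\infty$ and $\dim(R((T-\lambda I)^d)/R((T-\lambda I)^{d+1}))=c_d(T-\lambda I)<\infty$ together say precisely that the restriction of $T-\lambda I$ to $R((T-\lambda I)^d)$ is a Fredholm operator, so $T-\lambda I$ is B-Fredholm.

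For (5)$\Longrightarrow$(1), I would argue as in the earlier theorems: if $T-\lambda I$ is B-Fredholm, there is $d$ for which it has TUD for $n\ge d$ with $c_d(T-\lambda I)$ and $c_d'(T-\lambda I)$ both finite, and the punctured neighborhood estimate \eqref{BW1} gives $\alpha(T-\mu I)=c_0'(T-\mu I)=c_d'(T-\lambda I)<\infty$ and $\beta(T-\mu I)=c_0(T-\mu I)=c_d(T-\lambda I)<\infty$ for $0<|\mu-\lambda|<\epsilon$. Since $R(T-\mu I)$ is closed by \cite[Theorem 4.7]{Grabiner}, each such $T-\mu I$ is Fredholm, so $\lambda\notin\acc\,\sigma_{\Phi}(T)$. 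Alternatively, and more cleanly, the implication (5)$\Longrightarrow$(1) can be read off directly from Theorems \ref{F+-} and \ref{F-+}, since a B-Fredholm operator is both upper and lower semi-B-Fredholm and the punctured neighborhood simultaneously has $\alpha$ and $\beta$ finite.

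The main obstacle, as in Theorem \ref{F-+}, is the closedness of the relevant ranges. For the upper case, Lemma \ref{B-poc} supplies closedness from finite essential ascent; but here I am only given TUD, not quasi-Fredholmness, so I must be careful that finiteness of $c_d'(T-\lambda I)$ (not merely of $c_d$) is what triggers Lemma \ref{B-poc}. Since the B-Fredholm hypothesis delivers both finiteness conditions, the argument goes through for all TUD operators without the extra quasi-Fredholm assumption that Theorem \ref{F-+} needed for its lower semi-B-Weyl statement; the two-sided finiteness is exactly what removes that obstruction.
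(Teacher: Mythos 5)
Your proof is essentially the paper's own. For (4)$\Longrightarrow$(5) the paper simply says it is proved ``similarly to the implication (4)$\Longrightarrow$(5) in Theorem \ref{W}'', and that argument is exactly the one you wrote out; you are in fact more explicit than the paper in noting that it is the finiteness of $c_d^\prime(T-\lambda I)$ (i.e.\ finite essential ascent) that triggers Lemma \ref{B-poc} and yields closedness of $R((T-\lambda I)^{d})$ and $R((T-\lambda I)^{d+1})$, and your closing observation that this two-sided finiteness is precisely what removes the quasi-Fredholm hypothesis needed in Theorem \ref{F-+} is correct. Your direct punctured-neighborhood argument for (5)$\Longrightarrow$(1) is also complete and sound.

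Two slips, neither fatal to the proof. First, your justification of $\sigma_{B\Phi}(T)\subset\sigma_{\Phi}(T)$ is misstated: an upper (or lower) semi-Fredholm operator is in general only upper (or lower) semi-B-Fredholm, not B-Fredholm; what you need, and what is true, is that every \emph{Fredholm} operator is B-Fredholm with $n=0$. Second, your ``cleaner alternative'' for (5)$\Longrightarrow$(1) cites the wrong theorems: a B-Fredholm operator of nonzero index is \emph{not} both upper and lower semi-B-Weyl (if $\ind (T-\lambda I)>0$ it fails statement (5) of Theorem \ref{F+-}, and if $\ind (T-\lambda I)<0$ it fails statement (5) of Theorem \ref{F-+}), so the implication cannot be read off from Theorems \ref{F+-} and \ref{F-+}. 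The index-free facts you actually invoke in your justifying clause (``both upper and lower semi-B-Fredholm'') belong to Theorems \ref{SF+} and \ref{SF-}, and indeed the paper proves (5)$\Longrightarrow$(1) by citing exactly those two theorems: B-Fredholmness gives $a_e(T-\lambda I)<\infty$ together with $\delta_e(T-\lambda I)<\infty$, whence neither $\sigma_{\Phi_+}(T)$ nor $\sigma_{\Phi_-}(T)$, and hence not $\sigma_{\Phi}(T)=\sigma_{\Phi_+}(T)\cup\sigma_{\Phi_-}(T)$, clusters at $\lambda$. Since your primary direct argument stands on its own, the proof as a whole goes through once these two citations are repaired.
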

\begin{proof} (4)$\Longrightarrow$(5): It can be proved similarly to the proof of the implication (4)$\Longrightarrow$(5) in Theorem \ref{W}.

(5)$\Longrightarrow$(1)
It follows from Theorems \ref{SF+} and  \ref{SF-}.
\end{proof}
\begin{corollary}\label{cor-F}  Let $T\in L(X)$. Then

 \begin{itemize}

 \item[(1)] \quad $\sigma_{LD}^e(T)=\sigma_{TUD}(T)\cup\inter\, \sigma_{*}(T)=\sigma_{TUD}(T)\cup\acc\, \sigma_{*}(T)$,

\quad  where $\sigma_{*}\in\{\sigma_{\Phi_+}, \sigma_{LD}^e\}$;


 \item[(2)] \quad $\sigma_{dsc}^e(T)=\sigma_{TUD}(T)\cup\inter\, \sigma_{*}(T)=\sigma_{TUD}(T)\cup\acc\, \sigma_{*}(T),$

 \quad  where $\sigma_{*}\in\{\sigma_{\Phi_-}, \sigma_{RD}^e, \sigma_{dsc}^e\}$;

 \item[(3)] \quad $\sigma_{RD}^e(T)=\sigma_{q\Phi}(T)\cup\inter\, \sigma_{*}(T)=\sigma_{q\Phi}(T)\cup\acc\, \sigma_{*}(T),$

 \quad where $\sigma_{*}\in\{\sigma_{\Phi_-}, \sigma_{RD}^e, \sigma_{dsc}^e\}$;

\item[(4)] \quad $\sigma_{B\Phi}(T)=\sigma_{TUD}(T)\cup\inter\, \sigma_{*}(T)=\sigma_{TUD}(T)\cup\acc\, \sigma_{*}(T)$,

\quad  where $\sigma_{*}\in\{\sigma_{\Phi}, \sigma_{B\Phi}\}$;

\item[(5)] \quad $\inter\, \sigma_{\Phi_+}(T)=\inter\, \sigma_{LD}^e(T)$,

\quad $\inter\, \sigma_{\Phi_-}(T)=\inter\, \sigma_{RD}^e(T)=\inter\, \sigma_{dsc}^e(T)$,

\quad $\inter\, \sigma_{\Phi}(T)=\inter\, \sigma_{B\Phi}(T)$;

\item[(6)] \quad $\p\, \sigma_{LD}^e(T)\subset \p \, \sigma_{\Phi_+}(T)$,

\quad $\p\, \sigma_{dsc}^e(T)\subset \p\, \sigma_{RD}^e(T)\subset\p\, \sigma_{\Phi_-}(T)$,

\quad $\p\, \sigma_{B\Phi}(T)\subset\p\, \sigma_{\Phi}(T)$;

\item[(7)] \quad $\sigma_{\Phi_+}(T)\setminus \sigma_{LD}^e(T)=(\iso\, \sigma_{\Phi_+}(T))\setminus\sigma_{TUD}(T)$,

\quad $\sigma_{\Phi_-}(T)\setminus \sigma_{dsc}^e(T)=(\iso\, \sigma_{\Phi_-}(T))\setminus\sigma_{TUD}(T)$,

\quad $\sigma_{\Phi_-}(T)\setminus \sigma_{RD}^e(T)=(\iso\, \sigma_{\Phi_-}(T))\setminus\sigma_{qF}(T)$,

\quad $\sigma_{\Phi}(T)\setminus \sigma_{B\Phi}(T)=(\iso\, \sigma_{\Phi}(T))\setminus\sigma_{TUD}(T)$;

  \item[(8)] \quad $\sigma_{\Phi_+}(T)\setminus \sigma_{LD}^e(T)$, $\sigma_{\Phi_-}(T)\setminus \sigma_{dsc}^e(T)$, $\sigma_{\Phi_-}(T)\setminus \sigma_{RD}^e(T)$,  $\sigma_{\Phi}(T)\setminus \sigma_{B\Phi}(T)$ are at most countable.

\end{itemize}
\end{corollary}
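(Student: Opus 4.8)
The plan is to mirror, line for line, the proof of Corollary \ref{cor-W}, simply substituting the three Fredholm-type characterizations (Theorems \ref{SF+}, \ref{SF-}, \ref{F}) for the three Weyl-type ones (Theorems \ref{F+-}, \ref{F-+}, \ref{W}). Concretely, assertion (1) will be deduced from Theorem \ref{SF+} and assertion (4) from Theorem \ref{F}, each exactly as (1) and (3) of Corollary \ref{cor-W} were deduced from Theorems \ref{F+-} and \ref{W}; assertion (2) will come from the equivalence of statements (1)--(7) of Theorem \ref{SF-}; and assertion (3) is the genuinely new case, in which the role of $\sigma_{TUD}(T)$ is played by $\sigma_{q\Phi}(T)$.

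For (1), (2) and (4) the engine is a dichotomy on $\lambda$. If $T-\lambda I$ fails to have TUD then $\lambda\in\sigma_{TUD}(T)$, and $T-\lambda I$ can be neither upper semi-B-Fredholm, nor of finite essential descent, nor B-Fredholm, since each such class consists of operators having TUD; hence $\sigma_{TUD}(T)$ is contained in each of $\sigma_{LD}^e(T)$, $\sigma_{dsc}^e(T)$, $\sigma_{B\Phi}(T)$. If instead $T-\lambda I$ has TUD, the relevant theorem applies directly: in case (1), Theorem \ref{SF+} gives that $\lambda\notin\sigma_{LD}^e(T)$ precisely when $\lambda\notin\inter\,\sigma_*(T)$ and precisely when $\lambda\notin\acc\,\sigma_*(T)$, for $\sigma_*\in\{\sigma_{\Phi_+},\sigma_{LD}^e\}$. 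Splicing the two cases gives $\sigma_{LD}^e(T)=\sigma_{TUD}(T)\cup\inter\,\sigma_*(T)=\sigma_{TUD}(T)\cup\acc\,\sigma_*(T)$, and (2), (4) follow identically, using statements (2),(4),(6),(7) of Theorem \ref{SF-} and Theorem \ref{F} respectively.

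The delicate point is (3), where right essential Drazin invertibility is strictly stronger than ``$\delta_e<\infty$ plus TUD'', as Example \ref{primer} shows. By Proposition \ref{prJ+}(2), $T-\lambda I$ is right essentially Drazin invertible if and only if it is quasi-Fredholm and $\delta_e(T-\lambda I)<\infty$, whence $\sigma_{q\Phi}(T)\subset\sigma_{RD}^e(T)$. I would again split on $\lambda$: if $\lambda\in\sigma_{q\Phi}(T)$ then $\lambda\in\sigma_{RD}^e(T)$, while if $\lambda\notin\sigma_{q\Phi}(T)$ then $T-\lambda I$ has TUD and $\lambda\in\sigma_{RD}^e(T)$ is equivalent to $\delta_e(T-\lambda I)=\infty$, i.e.\ to $\lambda\in\sigma_{dsc}^e(T)$, which by Theorem \ref{SF-} is equivalent to $\lambda\in\inter\,\sigma_*(T)$ and to $\lambda\in\acc\,\sigma_*(T)$. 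This yields $\sigma_{RD}^e(T)=\sigma_{q\Phi}(T)\cup\inter\,\sigma_*(T)=\sigma_{q\Phi}(T)\cup\acc\,\sigma_*(T)$ for the three choices of $\sigma_*$; I expect this replacement of $\sigma_{TUD}$ by $\sigma_{q\Phi}$, flagged by Example \ref{primer}, to be the only real bookkeeping obstacle in the corollary.

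Assertions (5)--(8) are then purely formal. For (5), each identity in (1)--(4) shows the interior of the large spectrum ($\sigma_{\Phi_+},\sigma_{\Phi_-},\sigma_{\Phi}$) is contained in the corresponding B-spectrum on the left, hence in its interior, while the reverse inclusion is immediate from the containment of the B-spectrum in the large spectrum. For (6) I would use compactness (Proposition \ref{closed}, together with the standard closedness of the semi-Fredholm spectra) to write each closed spectrum as $\p\cup\inter$, and deduce the boundary inclusions from (5) and $\sigma_{B}\subset\sigma_{*}$. For (7), the accumulation form of (1)--(4) forces any point of $\sigma_*(T)\setminus\sigma_{B}(T)$ to be isolated in $\sigma_*(T)$ and to lie outside $\sigma_{TUD}(T)$ (respectively outside $\sigma_{q\Phi}(T)$ in the right essential case), the converse direction being supplied once more by the pertinent theorem. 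Finally (8) is immediate, since each set in (7) is a set of isolated points of a compact set and hence at most countable.
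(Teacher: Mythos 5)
Your proposal is correct and follows essentially the same route as the paper: the paper deduces (1)--(4) from Theorems \ref{SF+}, \ref{SF-} and \ref{F} exactly in the manner of Corollary \ref{cor-W} (with Proposition \ref{prJ+}(2) supplying the quasi-Fredholm splitting for (3), just as you do), and then obtains (5)--(8) formally from (1)--(4) together with Proposition \ref{closed}. Your explicit dichotomy arguments and the handling of $\sigma_{q\Phi}$ versus $\sigma_{TUD}$ in (3) are precisely the details the paper leaves implicit.
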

\begin{proof} (1) follows from  Theorem \ref{SF+}, (2) and (3) follow from Theorem \ref{SF-} and (4) follows from Theorem  \ref{F}. (5) and (7) follow from (1), (2), (3) and (4), while  (6) follows from (5) and Proposition \ref{closed}. (8) follows from (7).
\end{proof}

Further   we focus to left and right Drazin invertible operators. Q. Jiang, H. Zhong and Q. Zeng     proved that if $\lambda\in\CC$,  $T\in L(X)$ and $T-\lambda I$ has TUD  for $n\ge d$, then the following statements are equivalent (see \cite[Theorem 3.2]{JiangJMAA} and the proof of this theorem):
\begin{itemize}

\item[(1)] $T-\lambda I$ is left Drazin invertible;

\item[(2)] $a(T-\lambda I)<\infty$;

\item[(3)] $\sigma_{ap}(T)$ does not cluster at $\lambda$;

\item[(4)] $\lambda$ is not an interior point of $\sigma_{ap}(T)$,


\end{itemize}

\noindent while M. Berkani, N. Castro and S.V. Djordjevi\'c  proved in \cite[Theorem 2.5]{BCD} that, under the same condition that $T-\lambda I$ has TUD,   $\sigma_{p}(T)$ does not cluster at $\lambda$ if and only if $a(T-\lambda I)<\infty$. In the following theorem we add some characterisations of left Drazin invertible operators.

\begin{theorem}\label{a} Let $\lambda\in\CC$,  $T\in L(X)$ and let $T-\lambda I$ have TUD  for $n\ge d$. Then the following statements are equivalent:
\begin{itemize}

\item[(1)] $\lambda$ is not an interior point of $\sigma_{p}(T)$;

\item[(2)] $\sigma_{{\cal B}_+}(T)$ does not cluster at $\lambda$;

\item[(3)] $\lambda$ is not an interior point of $\sigma_{{\cal B}_+}(T)$;

\item[(4)] $\sigma_{LD}(T)$ does not cluster at $\lambda$;

\item[(5)] $\lambda$ is not an interior point of $\sigma_{LD}(T)$;


\item[(6)] $T-\lambda I$ is left Drazin invertible.

\end{itemize}
\end{theorem}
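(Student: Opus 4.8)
The plan is to run a cycle of implications, isolating the one substantive step. First I would dispose of the routine implications exactly as in Theorems \ref{F+-}--\ref{SF-}: the implications (2)$\Longrightarrow$(3) and (4)$\Longrightarrow$(5) hold because an interior point of a set is always a cluster point of that set, so ``does not cluster at $\lambda$'' is stronger than ``$\lambda$ is not an interior point''. The implications (2)$\Longrightarrow$(4) and (3)$\Longrightarrow$(5) follow from the inclusion $\sigma_{LD}(T)\subset\sigma_{\B_+}(T)$, which is the spectral form of $\B_+(X)\subset LD(X)$: if $T\in\B_+(X)$, then $T\in\Phi_+(X)$ forces all ranges $R(T^n)$, in particular $R(T^{a(T)+1})$, to be closed, while $a(T)<\infty$, so $T\in LD(X)$. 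It then suffices to close the loop with two Grabiner-type arguments, (5)$\Longrightarrow$(6) and (6)$\Longrightarrow$(2), and to settle (1)$\Longleftrightarrow$(6) separately.

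For the Grabiner steps I would use that, since $T-\lambda I$ has TUD for $n\ge d$, \cite[Theorem 4.7]{Grabiner} supplies $\epsilon>0$ for which \eqref{BW1} holds whenever $0<|\mu-\lambda|<\epsilon$. To prove (5)$\Longrightarrow$(6), from $\lambda\notin\inter\,\sigma_{LD}(T)$ I would pick $\mu$ in the punctured disc with $T-\mu I$ left Drazin invertible; then $a(T-\mu I)<\infty$ gives $c_n'(T-\mu I)=0$ for large $n$, and \eqref{BW1} transports this to $c_d'(T-\lambda I)=0$, that is $a(T-\lambda I)\le d$, so $T-\lambda I$ is left Drazin invertible by Lemma \ref{B-poc1}(2). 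For (6)$\Longrightarrow$(2), from $a(T-\lambda I)<\infty$ I again obtain $c_d'(T-\lambda I)=0$, whence \eqref{BW1} yields $\alpha(T-\mu I)=c_0'(T-\mu I)=0$ throughout the punctured disc; together with the closedness of $R(T-\mu I)$ from \cite[Corollary 4.8]{Grabiner}, this makes $T-\mu I$ bounded below, hence upper semi-Browder, so $\sigma_{\B_+}(T)$ does not cluster at $\lambda$.

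The heart of the matter is (1)$\Longleftrightarrow$(6), which I would settle by a dichotomy on the quantity $c:=c_d'(T-\lambda I)=\dim\big(N(T-\lambda I)\cap R((T-\lambda I)^d)\big)$. By \eqref{BW1}, $\alpha(T-\mu I)=c_0'(T-\mu I)=c$ for every $\mu$ in the punctured disc. If $c=0$, then the punctured disc misses $\sigma_p(T)$, so $\lambda$ is not a cluster point of $\sigma_p(T)$, in particular not an interior point of it, while simultaneously $a(T-\lambda I)\le d<\infty$, so $T-\lambda I$ is left Drazin invertible; thus (1) and (6) both hold. If $c>0$, then the punctured disc lies inside $\sigma_p(T)$, and $\alpha(T-\lambda I)\ge c>0$ puts $\lambda$ itself in $\sigma_p(T)$, so the entire disc lies in $\sigma_p(T)$ and $\lambda\in\inter\,\sigma_p(T)$; at the same time $c_d'(T-\lambda I)\neq0$ forces $a(T-\lambda I)=\infty$, so $T-\lambda I$ is not left Drazin invertible, and (1) and (6) both fail. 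Hence (1)$\Longleftrightarrow$(6).

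Combining everything, (6)$\Longrightarrow$(2)$\Longrightarrow$(3)$\Longrightarrow$(5)$\Longrightarrow$(6) together with (2)$\Longrightarrow$(4)$\Longrightarrow$(5) shows that (2)-(6) are equivalent, and (1)$\Longleftrightarrow$(6) appends the last statement. The step needing most care is the forward direction (1)$\Longrightarrow$(6) hidden in the dichotomy: one must show that excluding $\lambda$ as an interior point of $\sigma_p(T)$ already forces finite ascent, and this works only because in the case $c>0$ the single inequality $c_d'(T-\lambda I)>0$ simultaneously drives the punctured neighbourhood into $\sigma_p(T)$ \emph{and} guarantees $\lambda\in\sigma_p(T)$, so that $\lambda$ is genuinely interior; the reasoning is unaffected if $c=\infty$, since only the alternative $c=0$ versus $c>0$ is used.
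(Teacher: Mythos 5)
Your proof is correct, but it is organized quite differently from the paper's. The paper disposes of the nontrivial directions almost entirely by citation: (1)$\Longrightarrow$(6) and (5)$\Longrightarrow$(6) are reduced to statements (d) and (a) of Grabiner's Corollary 4.8 (each giving $a(T-\lambda I)<\infty$, then Lemma \ref{B-poc1}(2)), while (6)$\Longrightarrow$(2) is quoted from \cite[Theorem 3.2]{JiangJMAA} and (6)$\Longrightarrow$(1) from \cite[Theorem 2.5]{BCD}. You instead derive everything in a self-contained way from the single stability statement \eqref{BW1} of Grabiner's Theorem 4.7: your (5)$\Longrightarrow$(6) and (6)$\Longrightarrow$(2) are direct transports of $c_n'$ through the punctured disc (noting that Theorem 4.7 also supplies closedness of $R(T-\mu I)$, so that $\alpha(T-\mu I)=0$ yields bounded below, hence upper semi-Browder), and your dichotomy on $c=c_d'(T-\lambda I)$ settles both directions of (1)$\Longleftrightarrow$(6) at once --- in particular it recovers the cited result of Berkani--Castro--Djordjevi\'c as a two-line consequence of \eqref{BW1}. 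What your route buys is independence from \cite{JiangJMAA} and \cite{BCD}; what the paper's buys is brevity. One step you state without justification deserves a line: in (6)$\Longrightarrow$(2) the claim that $a(T-\lambda I)<\infty$ forces $c_d'(T-\lambda I)=0$ is not just monotonicity --- it uses that uniform descent for $n\ge d$ makes the sequence $(c_n'(T-\lambda I))_{n\ge d}$ constant (since $k_n(T-\lambda I)=0$ means $N(T-\lambda I)\cap R((T-\lambda I)^n)$ stabilizes), so eventual vanishing forces $a(T-\lambda I)\le d$; the same observation is what makes $a(T-\lambda I)=\infty$ follow from $c>0$ in your dichotomy. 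With that made explicit, the argument is complete.
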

\begin{proof}

 (2)$\Longrightarrow$(3), (4)$\Longrightarrow$(5) It is obvious.

(2)$\Longrightarrow$(4),  (3)$\Longrightarrow$(5) It follows from the inclusion      $\sigma_{LD}(T)\subset\sigma_{{\cal B}_+}(T)$.

(1)$\Longrightarrow$(6) Suppose that $\lambda$ is not an interior point of $\sigma_{p}(T)$. Since $T-\lambda I$ has TUD, from \cite[Corolary 4.8 (d)]{Grabiner} it follows that $a=a(T-\lambda I)<\infty$. Now from Lemma \ref{B-poc1} (2) we get that $T-\lambda I$ is a left Drazin invertible operator.

(5)$\Longrightarrow$(6) Suppose that $\lambda$ is not an interior point of $\sigma_{LD}(T)$. As $T-\lambda I$ has TUD, according to \cite[Corolary 4.8, (a)]{Grabiner} we conclude that $a(T-\lambda I)<\infty$ and by  Lemma \ref{B-poc1} (2) $T-\lambda I$ is left Drazin invertible.


(6)$\Longrightarrow$(2) 
It follows from the implication (6)$\Longrightarrow$(5) in \cite[Theorem 3.2]{JiangJMAA}.

(6)$\Longrightarrow$(1) It follows from \cite[Theorem 2.5]{BCD}.
\end{proof}


Q. Jiang, H. Zhong and Q. Zeng in   \cite[Theorem 3.4]{JiangJMAA} proved that if $\lambda\in\CC$,  $T\in L(X)$ and $T-\lambda I$ has TUD  for $n\ge d$, then the following statements are equivalent:
\begin{itemize}
\item[(1)] $\sigma_{su}(T)$ does not cluster at $\lambda$;

\item[(2)] $\lambda$ is not an interior point of $\sigma_{su}(T)$;

\item[(3)] $\delta(T-\lambda I)<\infty$.

\end{itemize}

In the following  theorem we add some  statements equivalent to those ones  in \cite[Theorem 3.4]{JiangJMAA}.

\begin{theorem}\label{su} Let $\lambda\in\CC$,  $T\in L(X)$ and let $T-\lambda I$ have TUD  for $n\ge d$. Then the following statements are equivalent:
\begin{itemize}
%

\item[(1)] $\sigma_{cp}(T)$ does not cluster at $\lambda$;

\item[(2)] $\lambda$ is not an interior point of $\sigma_{cp}(T)$;

\item[(3)] $\sigma_{{\cal B}_-}(T)$ does not cluster at $\lambda$;

\item[(4)] $\lambda$ is not an interior point of $\sigma_{{\cal B}_-}(T)$;

\item[(5)] $\sigma_{RD}(T)$ does not cluster at $\lambda$;

\item[(6)] $\lambda$ is not an interior point of $\sigma_{RD}(T)$;

\item[(7)] $\sigma_{dsc}(T)$ does not cluster at $\lambda$;

\item[(8)] $\lambda$ is not an interior point of $\sigma_{dsc}(T)$;

\item[(9)] $\delta(T-\lambda I)<\infty$.

In particular, if $T-\lambda I$ is quasi-Fredholm, then the statements (1)-(7) are equivalent to the following satement:

\item[(10)] $T-\lambda I$ is right Drazin invertible.

\end{itemize}
\end{theorem}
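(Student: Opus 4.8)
The plan is to funnel all eight statements (1)--(8) into the single condition (9), $\delta(T-\lambda I)<\infty$, and then to obtain (10) from Proposition \ref{prJ+}; the argument is the ``descent'' counterpart of Theorems \ref{SF-} and \ref{a} and of \cite[Theorem 3.4]{JiangJMAA}, with Grabiner's punctured neighborhood theorem \cite[Theorem 4.7]{Grabiner}, \cite[Corollary 4.8]{Grabiner} as the engine. First I would clear the routine implications. Since an interior point of a set is a fortiori a cluster point of it, (1)$\Rightarrow$(2), (3)$\Rightarrow$(4), (5)$\Rightarrow$(6) and (7)$\Rightarrow$(8) are immediate. The inclusions $\sigma_{dsc}(T)\subset\sigma_{RD}(T)\subset\sigma_{{\cal B}_-}(T)$ (a lower semi-Browder operator is right Drazin invertible, and right Drazin invertibility entails finite descent) then yield the two cascades (3)$\Rightarrow$(5)$\Rightarrow$(7) and (4)$\Rightarrow$(6)$\Rightarrow$(8).

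Next I would close the loop through (9). For (8)$\Rightarrow$(9): as $T-\lambda I$ has TUD for $n\ge d$, the stability relation \eqref{BW1} gives $\epsilon>0$ with $c_n(T-\mu I)=c_d(T-\lambda I)$ for all $n\ge 0$ whenever $0<|\mu-\lambda|<\epsilon$; since $\lambda\notin\inter\,\sigma_{dsc}(T)$ there is such a $\mu$ with $\delta(T-\mu I)<\infty$, hence $c_n(T-\mu I)=0$ for large $n$ and therefore $c_d(T-\lambda I)=0$, i.e.\ $\delta(T-\lambda I)\le d$. For the reverse direction (9)$\Rightarrow$(1),(3),(5),(7): $\delta(T-\lambda I)<\infty$ makes the $c_n(T-\lambda I)$ vanish for large $n$, and the TUD condition $k_n(T-\lambda I)=0$ for $n\ge d$ propagates this down to $c_d(T-\lambda I)=0$; then \eqref{BW1} forces $c_0(T-\mu I)=0$, so $T-\mu I$ is surjective for every $\mu$ in the punctured disc. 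A surjective operator lies outside each of $\sigma_{cp}$, $\sigma_{{\cal B}_-}$, $\sigma_{RD}$ and $\sigma_{dsc}$, so none of these clusters at $\lambda$.

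The delicate point, and the one I expect to require the most care, is linking the compression spectrum (statements (1), (2)) to (9), since $\sigma_{cp}(T)$ lies outside the inclusion chain and its complement need not be open. The crucial input is that \cite[Corollary 4.8]{Grabiner} renders $T-\mu I$ semi-regular, hence of \emph{closed} range, throughout the punctured disc $0<|\mu-\lambda|<\epsilon$; there dense range is automatically all of $X$, so on that disc $\sigma_{cp}(T)$ and $\sigma_{su}(T)$ coincide. This at once reduces the clustering statement (1) to \cite[Theorem 3.4]{JiangJMAA}. For (2)$\Rightarrow$(9) I would take $\mu\ne\lambda$ arbitrarily near $\lambda$ with $R(T-\mu I)$ dense, upgrade this to surjectivity by closedness, and conclude $c_0(T-\mu I)=0$, hence $c_d(T-\lambda I)=0$ as above. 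The real obstacle is the borderline configuration in which $\lambda$ is the \emph{only} nearby point of dense range; here one must exclude the possibility that $T-\lambda I$ has TUD, dense range and yet infinite descent, and it is exactly the closed-range content of semi-regularity --- together with the fact that a finite-codimensional range is automatically closed --- that must be exploited to do so.

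Finally, the quasi-Fredholm addendum is short. A quasi-Fredholm operator has TUD, so the equivalences (1)--(9) already apply, and Proposition \ref{prJ+}(1) states that $T-\lambda I$ is quasi-Fredholm with $\delta(T-\lambda I)<\infty$ exactly when $T-\lambda I$ is right Drazin invertible; thus (9)$\Leftrightarrow$(10), and all of (1)--(7) are equivalent to (10).
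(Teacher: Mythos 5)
Your overall architecture coincides with the paper's proof: the trivial implications, the cascades via $\sigma_{dsc}(T)\subset\sigma_{RD}(T)\subset\sigma_{{\cal B}_-}(T)$, the implication (8)$\Rightarrow$(9) through \eqref{BW1} and $c_d(T-\lambda I)=0$, the return from (9) via surjectivity of $T-\mu I$ throughout the punctured disc (the paper quotes \cite[Corollary 4.8 (c)]{Grabiner} where you argue directly through TUD and \eqref{BW1}, which is fine), and Proposition \ref{prJ+} (1) for the quasi-Fredholm addendum. Your rerouting of (1) through \cite[Theorem 3.4]{JiangJMAA}, using that $\sigma_{cp}(T)$ and $\sigma_{su}(T)$ agree on the punctured disc because ranges are closed there, is a legitimate small variant of the paper's direct proof of (2)$\Rightarrow$(1),(3).

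The one genuine gap is exactly the ``borderline configuration'' you name but do not resolve. If $\lambda\notin\sigma_{cp}(T)$, then (2) holds vacuously even when every $\mu\neq\lambda$ near $\lambda$ lies in $\sigma_{cp}(T)$, so no $\mu_0\neq\lambda$ with dense range is available, and one must show directly that TUD together with density of $R(T-\lambda I)$ forces $\delta(T-\lambda I)<\infty$. The two tools you invoke do not accomplish this: semi-regularity via \cite[Corollary 4.8]{Grabiner} concerns $T-\mu I$ for $\mu\neq\lambda$, which in this configuration have closed but possibly non-dense range, and ``a finite-codimensional operator range is closed'' only helps if you already know $c_d(T-\lambda I)<\infty$, which you do not. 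The correct instrument is \cite[Theorem 3.2]{Grabiner}, used elsewhere in the paper (Lemma \ref{B-poc}, Example \ref{primer}): TUD for $n\ge d$ implies that $R(T-\lambda I)+N((T-\lambda I)^d)$ is closed in $X$; this subspace is dense when $R(T-\lambda I)$ is, hence equals $X$, whence $c_d(T-\lambda I)=\codim\bigl(R(T-\lambda I)+N((T-\lambda I)^d)\bigr)=0$ and $\delta(T-\lambda I)\le d$. To be fair, the paper's own write-up of (2)$\Rightarrow$(1),(3) silently asserts the existence of $\mu_0$ with $0<|\mu_0-\lambda|<\epsilon$ and dense range, thereby glossing over the same case; your instinct in isolating it was sound, but a complete proof has to close it, e.g.\ as above.
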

\begin{proof}
 (1)$\Longrightarrow$(2), (3)$\Longrightarrow$(4), (5)$\Longrightarrow$(6), (7)$\Longrightarrow$(8) Obvious.

  (3)$\Longrightarrow$(5)$\Longrightarrow$(7), (4)$\Longrightarrow$(6)$\Longrightarrow$(8) It follows from the inclusions $\sigma_{dsc}(T)\subset \sigma_{RD}(T)\subset\sigma_{{\cal B}_-}(T)$.

(2)$\Longrightarrow$(1), (2)$\Longrightarrow$(3) Suppose that $\lambda$ is not an interior point of $\sigma_{cp}(T)$. Since $T-\lambda I$ has TUD for $n\ge d$, from \cite[Theorem 4.7]{Grabiner} we have that  there is an $\epsilon>0$ such that if  $0<|\lambda-\mu|<\epsilon$ it follows that $R(T-\mu I)$ is closed and
\begin{equation}\label{kuc}
 c_n(T-\mu I)=c_d(T-\lambda I)\ {\rm for\ all\ } n\in\NN_0.
\end{equation}
 From $\lambda\notin \inter\, \sigma_{cp}(T)$ it follows that there exists $\mu_0\in\CC$ such that $0<|\lambda-\mu_0|<\epsilon$ and $T-\mu_0 I$ has dense range. As $R(T-\mu_0 I)$ is closed, it implies that $T-\mu_0 I$ is onto and hence $c_n(T-\mu_0 I)=0$ for all $n\in\NN_0$. Consequently, $c_d(T-\lambda I)=0$ and hence for all $\mu\in\CC$ such that  $0<|\lambda-\mu|<\epsilon$ we have that $\beta(T-\mu I)=c_0(T-\mu I)=0$, i.e. $T-\mu I$ is surjective,  which means that  $\lambda\notin\acc\, \sigma_{cp}(T)$ and $\lambda\notin\acc\, \sigma_{{\cal B}_-}(T)$.

(8)$\Longrightarrow$(9) Suppose that $\lambda$ is not an interior point of $\sigma_{dsc}(T)$. Since $T-\lambda I$ has TUD for $n\ge d$, from \cite[Theorem 4.7]{Grabiner} we have that  there is an $\epsilon>0$ such that if  $0<|\lambda-\mu|<\epsilon$, then the equalities \eqref{kuc} hold.  From $\lambda\notin \inter\, \sigma_{dsc}(T)$ we have that there exists $\mu_0\in\CC$ such that $0<|\lambda-\mu_0|<\epsilon$ and $\delta(T-\mu_0 I)<\infty$. So there is $n\in\NN_0$ such that $c_n(T-\mu_0 I)=0$ and hence, according to \eqref{kuc}, it follows that $c_d(T-\lambda I)=0$. Thus $\delta(T-\lambda I)<\infty$.

(9)$\Longrightarrow$(1) It follows from  \cite[Corollary 4.8 (c)]{Grabiner}.

Under assumption that $T-\lambda I$ is quasi-Fredholm, the equivalence  (9)$\Longleftrightarrow$(10)  follows from  Proposition \ref{prJ+} (1).
\end{proof}

\begin{remark}\label{remark1} \rm Since  the operator $T$ in Example \ref{primer} has the finite descent, then according to \cite[Theorem 4.7 and Corollary 4.8]{Grabiner}  there exists an $\epsilon>0$ such that for $\mu\in\CC$ from $0<|\mu|<\epsilon$ it follows that $\delta(T-\mu I)=0$, i.e. $T-\mu I$ is surjective. This means that $0$ is not an accumulation point of $\sigma_{su}(T)$, as well as $\sigma_{\Phi_-}(T)$, $\sigma_{\W_-}(T)$, $\sigma_{RD}(T)$, $\sigma_{B\W_-}(T)$ and $\sigma_{RD}^e(T)$. As for every $n\in\NN$, $R(T^n)=R(T)$ is not closed, then $T$ is neither a lower semi-Fredholm nor  a lower semi-B Weyl operator, and as we have already mentioned $T$ is neither right Drazin invertible nor right essentially Drazin invertible. This means that the condition that $T-\lambda I$ is quasi-Fredholm in Theorems \ref{F-+}, \ref{SF-} and \ref{su}   can not be replaced by a weaker condition that $T-\lambda I$ has TUD.
\end{remark}

The next theorem follows immediately from  \cite[Theorems 3.2 and  3.4]{JiangJMAA} and Theorems \ref{a} and \ref{su}.
\begin{theorem}\label{Dra}
Let $\lambda\in\CC$,  $T\in L(X)$ and let $T-\lambda I$ have TUD  for $n\ge d$. Then the following statements are equivalent:
\begin{itemize}
\item[(1)] $\sigma(T)$ does not cluster at $\lambda$;

\item[(2)] $\lambda$ is not an interior point of $\sigma
(T)$;

\item[(3)] $\sigma_{\B}(T)$ does not cluster at $\lambda$;

\item[(4)] $\lambda$ is not an interior point of $\sigma_{\B}
(T)$;

\item[(5)] $\sigma_{D}(T)$ does not cluster at $\lambda$;

\item[(6)] $\lambda$ is not an interior point of $\sigma_{D}(T)$;

\item[(7)] $T-\lambda I$ is Drazin invertible.
\end{itemize}
\end{theorem}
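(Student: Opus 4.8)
Theorem \ref{Dra} characterizes Drazin invertibility under the standing assumption that $T-\lambda I$ has TUD for $n \ge d$. I would prove it by reducing the seven conditions to the two already-established characterizations: the chain for left Drazin invertibility in Theorem \ref{a} (and the preceding results from \cite{JiangJMAA}, \cite{BCD}) and the chain for right Drazin invertibility in Theorem \ref{su}. The guiding observation is that Drazin invertibility is exactly the conjunction of left and right Drazin invertibility, i.e. $a(T-\lambda I) < \infty$ \emph{and} $\delta(T-\lambda I) < \infty$, while the relevant one-sided spectra satisfy $\sigma_{ap}(T) \cup \sigma_{su}(T) = \sigma(T)$ and $\sigma_{\B_+}(T) \cup \sigma_{\B_-}(T) = \sigma_{\B}(T)$ and $\sigma_{LD}(T) \cup \sigma_{RD}(T) = \sigma_D(T)$. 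So each two-sided condition splits into a pair of one-sided conditions, each of which is already known to be equivalent to the appropriate finiteness statement under the TUD hypothesis.

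\medskip

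The plan is as follows. First I would dispose of the trivial implications $(1)\Rightarrow(2)$, $(3)\Rightarrow(4)$, $(5)\Rightarrow(6)$ (a set not clustering at a point cannot have that point as an interior point) and the inclusion-based implications $(1)\Rightarrow(3)\Rightarrow(5)$ and $(2)\Rightarrow(4)\Rightarrow(6)$, which follow from $\sigma_D(T) \subset \sigma_{\B}(T) \subset \sigma(T)$. This leaves the task of closing the loop, for which the cleanest route is to show $(6)\Rightarrow(7)$ and $(7)\Rightarrow(1)$. For $(6)\Rightarrow(7)$: if $\lambda \notin \inter\, \sigma_D(T)$, then since $\sigma_D(T) = \sigma_{LD}(T) \cup \sigma_{RD}(T)$ and both $\sigma_{LD}(T)$ and $\sigma_{RD}(T)$ are contained in $\sigma_D(T)$, the point $\lambda$ is an interior point of neither, so Theorem \ref{a}(5)$\Rightarrow$(6) gives that $T-\lambda I$ is left Drazin invertible and Theorem \ref{su}(6)$\Rightarrow$(10) (using the quasi-Fredholmness that left-plus-right Drazin invertibility supplies, or directly that $\delta(T-\lambda I) < \infty$) gives the right-sided half; together $a(T-\lambda I) < \infty$ and $\delta(T-\lambda I) < \infty$, so $T-\lambda I$ is Drazin invertible.

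\medskip

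For $(7)\Rightarrow(1)$: if $T-\lambda I$ is Drazin invertible, then it is simultaneously left and right Drazin invertible, so by Theorem \ref{a} $\sigma_{ap}(T)$ does not cluster at $\lambda$ and by Theorem \ref{su} $\sigma_{su}(T)$ does not cluster at $\lambda$. Since $\sigma(T) = \sigma_{ap}(T) \cup \sigma_{su}(T)$, the union of two sets neither of which clusters at $\lambda$ cannot cluster at $\lambda$ either, so $\sigma(T)$ does not cluster at $\lambda$, which is $(1)$. This completes the cycle and establishes the equivalence of all seven statements. The main obstacle, and the only genuinely non-routine point, is keeping the bookkeeping of the one-sided spectra straight: one must verify that the interior and accumulation-point conditions on $\sigma_D$, $\sigma_{\B}$, $\sigma$ really do decompose coordinatewise into the corresponding conditions on the upper and lower pieces, which rests on the set-theoretic facts $\inter(A \cup B) \supset \inter A \cup \inter B$ and $\acc(A \cup B) = \acc A \cup \acc B$ together with the spectral identities above. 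Everything else is a direct appeal to the already-proved Theorems \ref{a} and \ref{su} and the cited results of Jiang--Zhong--Zeng.
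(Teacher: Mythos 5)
Your proposal is correct and follows essentially the same route as the paper, whose entire proof is the one-line remark that the theorem ``follows immediately from \cite[Theorems 3.2 and 3.4]{JiangJMAA} and Theorems \ref{a} and \ref{su}''; you have simply made explicit the decomposition (Drazin $=$ left $+$ right Drazin, $\sigma=\sigma_{ap}\cup\sigma_{su}$, $\sigma_{\B}=\sigma_{\B_+}\cup\sigma_{\B_-}$, $\sigma_D=\sigma_{LD}\cup\sigma_{RD}$) that the authors leave implicit. Your hedge in $(6)\Rightarrow(7)$ is resolved the right way: use Theorem \ref{su} $(6)\Rightarrow(9)$ to get $\delta(T-\lambda I)<\infty$ directly, since finite ascent plus finite descent is the paper's definition of Drazin invertibility, avoiding any circular appeal to quasi-Fredholmness.
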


%

\begin{corollary}\label{cor-Dra}  Let $T\in L(X)$. Then
\begin{itemize}
 \item[(1)]
\quad $\sigma_{LD}(T)=\sigma_{TUD}(T)\cup\inter\, \sigma_*(T)=\sigma_{TUD}(T)\cup\acc\, \sigma_*(T),$

 \quad where $\sigma_*\in\{\sigma_p, \sigma_{ap}, \sigma_{{\cal B}_+},\sigma_{LD}\}$;

 \item[(2)] \quad $\sigma_{dsc}(T)=\sigma_{TUD}(T)\cup\, \inter\, \sigma_{*}(T)=\sigma_{TUD}(T)\cup\, \acc\, \sigma_{*}(T),$

\quad where $\sigma_{*}\in\{\sigma_{su},\sigma_{cp}, \sigma_{\B_-}, \sigma_{RD}, \sigma_{dsc}\}$;

 \item[(3)] \quad $\sigma_{RD}(T)=\sigma_{q\Phi}(T)\cup\, \inter\, \sigma_{*}(T)=\sigma_{q\Phi}(T)\cup\, \acc\, \sigma_{*}(T),$

\quad where $\sigma_{*}\in\{\sigma_{su},\sigma_{cp}, \sigma_{\B_-}, \sigma_{RD}, \sigma_{dsc}\}$;

\item[(4)] \quad $\sigma_D(T)=\sigma_{TUD}(T)\cup\inter\, \sigma_*(T)=\sigma_{TUD}(T)\cup\acc\, \sigma_*(T),$

\quad where $\sigma_*\in\{\sigma,  \sigma_{\B}, \sigma_D\}$;

\item[(5)] \quad $\inter\, \sigma_{ap}(T)=\inter\, \sigma_{\B_+}(T)=\inter\, \sigma_{LD}(T)$,

\quad $\inter\, \sigma_{su}(T)=\inter\, \sigma_{\B_-}(T)=\inter\, \sigma_{RD}(T)=\inter\, \sigma_{dsc}(T)$,

\quad $\inter\, \sigma(T)=\inter\, \sigma_{\B}(T)=\inter\, \sigma_{D}(T)$;

\item[(6)] \quad $\p\, \sigma_{LD}(T)\subset\p\, \sigma_{\B_+}(T)\subset\p\, \sigma_{ap}(T)$,

\quad $\p\, \sigma_{dsc}(T)\subset\p\, \sigma_{RD}(T)\subset\p\, \sigma_{\B_-}(T)\subset\p\, \sigma_{su}(T)$,

\quad $\p\, \sigma_{D}(T)\subset\p\, \sigma_{\B}(T)\subset\p\, \sigma(T)$;

\item[(7)] \quad $\sigma_{*}(T)\setminus \sigma_{LD}(T)=(\iso\, \sigma_{*}(T))\setminus\sigma_{TUD}(T)$ for $\sigma_{*}\in\{\sigma_{ap}, \sigma_{\B_+}\}$,

\quad $\sigma_{*}(T)\setminus \sigma_{dsc}(T)=(\iso\, \sigma_{*}(T))\setminus\sigma_{TUD}(T)$ for  $\sigma_{*}\in\{\sigma_{su}, \sigma_{\B_-},\sigma_{RD} \}$,

\quad $\sigma_{*}(T)\setminus \sigma_{RD}(T)=(\iso\, \sigma_{*}(T))\setminus\sigma_{qF}(T)$  for  $\sigma_{*}\in\{\sigma_{su}, \sigma_{\B_-},\sigma_{dsc} \}$,

\quad $\sigma_{*}(T)\setminus \sigma_{D}(T)=(\iso\, \sigma_{*}(T))\setminus\sigma_{TUD}(T)$  for  $\sigma_{*}\in\{\sigma, \sigma_{\B} \}$.

\end{itemize}
\end{corollary}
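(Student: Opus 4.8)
The plan is to read off all seven statements from the equivalence theorems of this section, exactly as Corollaries~\ref{cor-W} and~\ref{cor-F} were read off from theirs; no new analytic input is needed. Fixing $\sigma_*\in\{\sigma_p,\sigma_{ap},\sigma_{\B_+},\sigma_{LD}\}$ and combining Theorem~\ref{a} with the Jiang--Zhong--Zeng \cite{JiangJMAA} and Berkani--Castro--Djordjevi\'c \cite{BCD} equivalences quoted before it, ``$T-\lambda I$ is left Drazin invertible'' is equivalent, for $T-\lambda I$ having TUD, both to $\lambda\notin\inter\,\sigma_*(T)$ and to $\lambda\notin\acc\,\sigma_*(T)$. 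Since left Drazin invertibility already forces TUD (Lemma~\ref{B-poc1}(2)), one has $\sigma_{TUD}(T)\subset\sigma_{LD}(T)$, and reading the equivalence contrapositively gives $\lambda\in\sigma_{LD}(T)$ iff $T-\lambda I$ fails TUD or $\lambda\in\inter\,\sigma_*(T)$, which is exactly $\sigma_{LD}(T)=\sigma_{TUD}(T)\cup\inter\,\sigma_*(T)$, and identically with $\acc$ replacing $\inter$. I would obtain (2) the same way from Theorem~\ref{su}, pivoting on the condition $\delta(T-\lambda I)<\infty$ (that is, $\lambda\notin\sigma_{dsc}(T)$), and (4) from Theorem~\ref{Dra}.

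The only point demanding extra care is (3), where $\sigma_{TUD}$ must be replaced by $\sigma_{q\Phi}$. Here I would invoke Proposition~\ref{prJ+}(1): $T-\lambda I$ is right Drazin invertible iff it is quasi-Fredholm and $\delta(T-\lambda I)<\infty$. Quasi-Fredholmness implies TUD, so Theorem~\ref{su} still applies and makes $\delta(T-\lambda I)<\infty$ equivalent to $\lambda\notin\inter\,\sigma_*(T)$ and to $\lambda\notin\acc\,\sigma_*(T)$; conjoining with quasi-Fredholmness and using $\sigma_{q\Phi}(T)\subset\sigma_{RD}(T)$ gives $\sigma_{RD}(T)=\sigma_{q\Phi}(T)\cup\inter\,\sigma_*(T)=\sigma_{q\Phi}(T)\cup\acc\,\sigma_*(T)$.

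Parts (5)--(7) then follow formally. For (5) I would use the chains $\sigma_{LD}(T)\subset\sigma_{\B_+}(T)\subset\sigma_{ap}(T)$, $\sigma_{dsc}(T)\subset\sigma_{RD}(T)\subset\sigma_{\B_-}(T)\subset\sigma_{su}(T)$ and $\sigma_{D}(T)\subset\sigma_{\B}(T)\subset\sigma(T)$, each coming from a containment of operator classes (for instance $\I(X)\subset\B_+(X)\subset LD(X)$), which give one direction of the interior equalities; parts (1)--(4) supply $\inter\,\sigma_{ap}(T)\subset\sigma_{LD}(T)$ and its analogues, and since an interior is open this yields $\inter\,\sigma_{ap}(T)\subset\inter\,\sigma_{LD}(T)$, whereupon the chain collapses to a single interior. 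Part (6) follows from (5) and the elementary fact that $A\subset B$ closed with $\inter A=\inter B$ forces $\partial A\subset\partial B$, all spectra involved being closed by Proposition~\ref{closed} together with the classical compactness of $\sigma,\sigma_{ap},\sigma_{su},\sigma_{\B},\sigma_{\B_+},\sigma_{\B_-}$. Part (7) is the two-inclusion argument of Corollary~\ref{cor-W}(6): a point of $\sigma_*(T)\setminus\sigma_{LD}(T)$ is, by the equivalence, not an accumulation point of $\sigma_*(T)$, hence isolated, and lies outside $\sigma_{TUD}(T)$ because $\sigma_{TUD}(T)\subset\sigma_{LD}(T)$; conversely any isolated point of $\sigma_*(T)$ outside $\sigma_{TUD}(T)$ has TUD and is non-accumulating, so $T-\lambda I$ is left Drazin invertible. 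The third identity of (7) is the same with $\sigma_{q\Phi}$ in place of $\sigma_{TUD}$ and Proposition~\ref{prJ+}(1) supplying right Drazin invertibility.

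The main obstacle is organizational rather than mathematical: one must track the nesting of a large number of spectra and respect the TUD-versus-quasi-Fredholm distinction, so that $\sigma_{q\Phi}$ is substituted for $\sigma_{TUD}$ in exactly the right places, namely (3) and the third identity of (7). All genuine content is already carried by Theorems~\ref{a},~\ref{su},~\ref{Dra} and Propositions~\ref{closed} and~\ref{prJ+}.
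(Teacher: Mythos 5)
Your proposal is correct and follows essentially the same route as the paper, whose proof of this corollary is simply the remark that it follows from Theorems \ref{a}, \ref{su} and \ref{Dra}, \cite[Theorem 3.2]{JiangJMAA}, \cite[Theorem 2.5]{BCD} and Proposition \ref{closed}, ``similarly to the proof of Corollary \ref{cor-W}''. You have merely made explicit what the paper leaves implicit — the contrapositive reading that yields the union formulas, the role of Proposition \ref{prJ+}(1) in substituting $\sigma_{q\Phi}$ for $\sigma_{TUD}$ in part (3) and in the third identity of (7), and the formal derivation of (5)--(7) from (1)--(4) and closedness — all of which matches the template of Corollary \ref{cor-W}.
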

\begin{proof} It follows from Theorems \ref{a} and \ref{su}, \cite[Theorem 3.2]{JiangJMAA}, \cite[Theorem 2.5]{BCD},
 Theorem \ref{Dra} and Proposition \ref{closed}, similarly to the proof of Corollary \ref{cor-W}.
%
%
%
\end{proof}


We remark that from \cite[Lemma 3.1]{P8} it follows that
\begin{equation}\label{lk1}
  {\bf BR^a_4=R^a_4},\ \ {\bf BR^a_5=R^a_5}.
\end{equation}

Now we can formulate  a general assertion:
\begin{theorem}\label{final} Let $T\in L(X)$.

\begin{itemize}
\item[(1)]
If $\R\in\{{\bf R_1,R_2,R_3,R_4,R_5},\W_-(X)\}$, then
\begin{eqnarray*}
  T\in {\bf BR}&\Longleftrightarrow& T\ {\rm is\ quasi-Fredholm}\ \ \wedge\ \ 0\notin\acc\, \sigma_{{\bf R}}(T)\\
    &\Longleftrightarrow& T\ {\rm is\ quasi-Fredholm}\ \ \wedge\ \  0\notin\inter\, \sigma_{{\bf R}}(T).
 \end{eqnarray*}

 \bigskip

If $\R\in\{{\bf R_6, R_7,R_8,R_9,R_{10}, R^a_4, R^a_5}, \W_+(X),\W(X),\Phi(X), \B(X), L(X)^{-1}\}$, then
\begin{eqnarray*}
  T\in {\bf BR}&\Longleftrightarrow& T\ {\rm has\ TUD}\ \ \wedge\ \  0\notin\acc\, \sigma_{{\bf R}}(T)\\
    &\Longleftrightarrow& T\ {\rm has\ TUD}\ \ \wedge\ \  0\notin\inter\, \sigma_{{\bf R}}(T).
 \end{eqnarray*}

 \item[(2)] If $\R\in\{{\bf R_1, R_2,R_4,R_6, R_7,R_9}\}\cup\{\W_+(X),\W_-(X),\W(X),\Phi(X), \B(X), L(X)^{-1}\}$, then
 \begin{eqnarray*}
   \inter\, \sigma_{\bf R}(T) &=& \inter\, \sigma_{\bf{BR}}(T), \\
   \p \, \sigma_{\bf{BR}}(T)&\subset&\p\, \sigma_{\bf R}(T)
 \end{eqnarray*}
  and
  $\sigma_{\bf R}(T)\setminus \sigma_{\bf{BR}}(T)$ consists of at most countably many isolated points.
 \end{itemize}
 \end{theorem}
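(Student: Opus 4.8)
The plan is to recognise both parts as consolidations, read at the single point $\lambda=0$, of the spectral identities already established in Corollaries \ref{cor-W}, \ref{cor-F} and \ref{cor-Dra}. First I would record the identification of each class $\mathbf{BR}$ with a concrete B-class. From \cite[Theorem 3.6]{P8} and \cite{MbekhtaMuller} one has $\mathbf{BR_1}=\mathbf{BR_2}=\mathbf{BR_3}=RD(X)$, $\mathbf{BR_4}=\mathbf{BR_5}=RD^e(X)$, and $\mathbf{B}\W_-(X)$ is the class of lower semi-B-Weyl operators; likewise $\mathbf{BR_6}=\mathbf{BR_7}=\mathbf{BR_8}=LD(X)$, $\mathbf{BR_9}=\mathbf{BR_{10}}=LD^e(X)$, while $\mathbf{B}\W_+(X)$, $\mathbf{B}\W(X)$ and $\mathbf{B}\Phi(X)$ are the upper semi-B-Weyl, B-Weyl and B-Fredholm operators and $\mathbf{B}\B(X)=\mathbf{B}(L(X)^{-1})=D(X)$; finally $\mathbf{BR^a_4}=\mathbf{R^a_4}$ and $\mathbf{BR^a_5}=\mathbf{R^a_5}$ by \eqref{lk1}. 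Under these identifications $\sigma_{\mathbf{R}}$ is, in the first group, one of $\sigma_{su},\sigma_{\B_-},\sigma_{RD},\sigma_{\Phi_-},\sigma_{RD}^e,\sigma_{\W_-}$, and in the second group one of $\sigma_{ap},\sigma_{\B_+},\sigma_{LD},\sigma_{\Phi_+},\sigma_{LD}^e,\sigma_{dsc},\sigma_{dsc}^e,\sigma_{\W_+},\sigma_{\W},\sigma_{\Phi},\sigma_{\B},\sigma$.

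The engine of Part (1) is the trivial remark that $T\in\mathbf{BR}\Leftrightarrow 0\notin\sigma_{\mathbf{BR}}(T)$, that $0\notin\sigma_{q\Phi}(T)$ says exactly that $T$ is quasi-Fredholm, and that $0\notin\sigma_{TUD}(T)$ says exactly that $T$ has TUD. With these dictionaries I would simply pass to complements at $\lambda=0$ in the relevant set-equalities. For the first group, Corollaries \ref{cor-W}(2), \ref{cor-F}(3) and \ref{cor-Dra}(3) read $\sigma_{\mathbf{BR}}(T)=\sigma_{q\Phi}(T)\cup\acc\,\sigma_{\mathbf{R}}(T)=\sigma_{q\Phi}(T)\cup\inter\,\sigma_{\mathbf{R}}(T)$; evaluating at $0$ and negating turns this into the asserted equivalence of $T\in\mathbf{BR}$ with ``$T$ is quasi-Fredholm and $0\notin\acc\,\sigma_{\mathbf{R}}(T)$'' and with its $\inter$-analogue. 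For the second group, Corollaries \ref{cor-W}(1),(3), \ref{cor-F}(1),(2),(4) and \ref{cor-Dra}(1),(2),(4) give the same equalities with $\sigma_{TUD}$ in place of $\sigma_{q\Phi}$, yielding the TUD characterisation. The two descent classes $\mathbf{R^a_4},\mathbf{R^a_5}$ are covered by choosing $\sigma_*=\sigma_{dsc}$ in Corollary \ref{cor-Dra}(2) and $\sigma_*=\sigma_{dsc}^e$ in Corollary \ref{cor-F}(2), together with \eqref{lk1}.

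Part (2) requires no new work: for $\W_+,\W_-,\W$ it is Corollary \ref{cor-W}(4),(5),(7); for the classes producing $\sigma_{\Phi_+},\sigma_{\Phi_-},\sigma_{\Phi}$ (namely $\mathbf{R_9},\mathbf{R_4},\Phi(X)$) it is Corollary \ref{cor-F}(5),(6),(8); and for the surjective, bounded-below, Browder and invertible classes ($\mathbf{R_1},\mathbf{R_2},\mathbf{R_6},\mathbf{R_7},\B(X),L(X)^{-1}$) it is Corollary \ref{cor-Dra}(5),(6),(7), in each case after substituting the identifications above. I would point out that $\mathbf{R_3},\mathbf{R_5},\mathbf{R_8},\mathbf{R_{10}},\mathbf{R^a_4},\mathbf{R^a_5}$ are deliberately absent from Part (2): for them $\mathbf{BR}=\mathbf{R}$ (the B-operation is idempotent on the Drazin-type classes and, by \eqref{lk1}, trivial on the descent classes), so all three assertions degenerate to equalities of a set with itself.

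Since every ingredient is already proved, there is no analytic obstacle; the only real care is bookkeeping — pinning down each $\mathbf{BR}$ and its matching spectrum, and selecting the correct item of the correct corollary. The one genuinely substantive point to flag is the placement of $\mathbf{R^a_4}$ and $\mathbf{R^a_5}$ in the TUD group rather than the quasi-Fredholm one. By Example \ref{primer} a finite-descent operator need not be quasi-Fredholm, so for these two classes the regularity hypothesis cannot be strengthened to quasi-Fredholmness without breaking the forward implication, and TUD is exactly right; this is in contrast to the first group, where Remark \ref{remark1} shows that TUD is in turn too weak and quasi-Fredholmness is indispensable.
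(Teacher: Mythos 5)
Your proposal is correct and is essentially the paper's own route: the paper states Theorem \ref{final} with no separate argument precisely because it is the consolidation, read at $\lambda=0$, of Corollaries \ref{cor-W}, \ref{cor-F} and \ref{cor-Dra} together with the identifications of each $\mathbf{BR}$ with a concrete B-class from \cite{P8} and \eqref{lk1}, which is exactly the reduction you carry out. Your bookkeeping — including the degenerate cases $\mathbf{BR}=\mathbf{R}$ explaining the omissions in part (2), and the role of Example \ref{primer} and Remark \ref{remark1} in placing $\mathbf{R^a_4},\mathbf{R^a_5}$ in the TUD group while keeping quasi-Fredholmness for the first group — matches the paper's intent.
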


\section{Boundaries and connected hulls of corresponding\\ spectra}

The {\it connected hull}  of a compact subset $K$ of the complex
plane $\CC$, denoted by $\eta K$, is the complement of the unbounded
component of $\CC\setminus K$ \cite[Definition
7.10.1]{H}. Given a compact subset $K$ of the plane, a hole of $K$
is a bounded component of $\CC\setminus K$, and so a hole of $K$ is
a component of $\eta K\setminus K$.

We shall need the following well-known result (see \cite[Theorem 1.2, Theorem 1.3]{HW}, \cite[Theorem 7.10.2, and
7.10.3]{H}).

\begin{proposition}\label{pocetna}
Let $K,H\subset \CC$ be compact and let
$$
\partial K\subset H \subset K.
$$
Then
$$
\partial K\subseteq\partial H\subseteq H\subseteq
K\subseteq\eta K=\eta H.
$$
\par If $\Omega$ is a component of $\CC\setminus H$, then $\Omega\subset K$ or $\Omega\cap K=\emptyset$.

The set $K$ can be obtained
from $H$ by filling in some holes of $H$.
\end{proposition}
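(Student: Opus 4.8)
The plan is to establish the chain of inclusions first, then prove the equality of connected hulls by a single connectedness argument, and finally read off both the component alternative and the ``filling holes'' description as consequences of that same argument.

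First I would dispatch the soft inclusions. The relations $H\subseteq K$ and $\partial H\subseteq H$ are immediate from the hypothesis and from $H$ being compact, hence closed. For $\partial K\subseteq\partial H$ I would take $x\in\partial K$; by hypothesis $x\in H$, and if $x$ were interior to $H$ then, since $H\subseteq K$, it would be interior to $K$, contradicting $x\in\partial K=K\setminus\inter K$. The inclusions $K\subseteq\eta K$ and $H\subseteq\eta H$ hold by the very definition of the connected hull. Writing $U_K$ (resp.\ $U_H$) for the unbounded component of $\CC\setminus K$ (resp.\ $\CC\setminus H$), so that $\eta K=\CC\setminus U_K$ and $\eta H=\CC\setminus U_H$, the equality $\eta K=\eta H$ reduces to $U_K=U_H$.

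The heart of the argument, and the step I expect to be the main obstacle, is proving $U_K=U_H$ using the hypothesis $\partial K\subseteq H$. One inclusion is automatic: from $H\subseteq K$ we get $\CC\setminus K\subseteq\CC\setminus H$, so $U_K$ is a connected, unbounded subset of $\CC\setminus H$ and hence $U_K\subseteq U_H$. For the reverse I would show $U_H\cap K=\emptyset$. The key observation is that any component $\Omega$ of the open set $\CC\setminus H$ satisfies $\Omega\cap\partial K\subseteq\Omega\cap H=\emptyset$, precisely because $\partial K\subseteq H$. Thus $\Omega$ is the disjoint union of the two open sets $\Omega\cap\inter K$ and $\Omega\cap(\CC\setminus K)$, and since $\Omega$ is connected one of them must be empty. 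Applying this to $\Omega=U_H$: the alternative $U_H\subseteq K$ is impossible because $U_H$ is unbounded while $K$ is compact, so $U_H\cap\inter K=\emptyset$ and therefore $U_H\cap K=\emptyset$. Hence $U_H\subseteq\CC\setminus K$ is connected and unbounded, giving $U_H\subseteq U_K$ and the desired equality.

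Finally I would harvest the remaining claims from the same dichotomy. For an arbitrary component $\Omega$ of $\CC\setminus H$, the splitting above yields either $\Omega\cap\inter K=\emptyset$, whence (using $\Omega\cap\partial K=\emptyset$) $\Omega\cap K=\emptyset$, or $\Omega\cap(\CC\setminus K)=\emptyset$, whence $\Omega\subseteq K$; this is exactly the stated alternative. For the last assertion I would take any $x\in K\setminus H$, let $\Omega$ be its component in $\CC\setminus H$, and note that $\Omega\cap K\neq\emptyset$ forces $\Omega\subseteq K$, so in fact $\Omega\subseteq K\setminus H$; being contained in the compact set $K$, such an $\Omega$ is bounded, i.e.\ a hole of $H$. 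Consequently $K\setminus H$ is a union of holes of $H$, and $K=H\cup(K\setminus H)$ is obtained from $H$ by filling in exactly those holes.
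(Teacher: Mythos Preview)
Your argument is correct. The dichotomy you isolate---that each component of $\CC\setminus H$ misses $\partial K$ and is therefore either inside $\inter K$ or inside $\CC\setminus K$---is exactly the right engine, and you extract all three conclusions (the equality $\eta K=\eta H$, the component alternative, and the filling-holes description) cleanly from it. One tiny remark: when you pass from $U_H\cap\inter K=\emptyset$ to $U_H\cap K=\emptyset$ you are silently using $U_H\cap\partial K=\emptyset$ again; this is fine, but you might make it explicit there as you do a few lines later.

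As for comparison with the paper: the paper does not actually prove this proposition. It is quoted as a well-known result and simply attributed to Harte--Wickstead \cite{HW} (Theorems~1.2 and~1.3) and Harte's book \cite{H} (Theorems~7.10.2 and~7.10.3). So you have supplied a self-contained proof where the authors chose to cite the literature; your connectedness argument is the standard one underlying those references.
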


\begin{remark}\label{the same connected hulls} \rm If $K\subseteq\CC$ is at most countable, then $\eta K=K$.
Therefore, for compact subsets $H,K\subseteq\CC$, if
$\eta K=\eta H$, then $H\ {\rm is\
finite \ (countable)}$ if and only if $ K\ {\rm is\ finite\ (countable)}$,
and in that case $H=K$. Particulary,  for compact subsets $H,K\subseteq\CC$, if
$\eta K=\eta H$, then $K$ is empty if and only if $H$ is empty.
\end{remark}


\begin{corollary}\label{pocetna-rub}  Let $T\in L(X)$.
 \begin{itemize}
\item[(1)] $\p\sigma_*(T)\subset \p\sigma_{TUD}(T)$, where $\sigma_*\in\{ \sigma_{B\W_+},  \sigma_{B\W},\sigma_{LD}^e, \sigma_{dsc}^e,\sigma_{B\Phi}, \sigma_{LD}, \sigma_{dsc}, \sigma_D \}$;


\item[(2)] $\p\sigma_*(T)\subset \p\sigma_{q\Phi}(T)$, where $\sigma_*\in\{\sigma_{B\W_-},\sigma_{RD}, \sigma_{RD}^e \}$.


\end{itemize}
\end{corollary}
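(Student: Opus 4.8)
The plan is to reduce the whole statement to the elementary topological fact recorded in Proposition~\ref{pocetna}: if $K,H\subset\CC$ are compact and $\p K\subset H\subset K$, then $\p K\subset\p H$. For each spectrum $\sigma_*$ occurring in the list I take $K=\sigma_*(T)$ and set $H=\sigma_{TUD}(T)$ for the families in~(1) and $H=\sigma_{q\Phi}(T)$ for the families in~(2). Both $K$ and $H$ are compact by Proposition~\ref{closed}, so it only remains to verify the two inclusions $\p K\subset H$ and $H\subset K$.

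The key observation is that both inclusions are already encoded in a single equality. Indeed, Corollaries~\ref{cor-W}, \ref{cor-F} and~\ref{cor-Dra} supply, for each $\sigma_*$ in the list, a decomposition of the form
\[
\sigma_*(T)=\sigma_\sharp(T)\cup\inter\,\sigma_*(T),
\]
where $\sigma_\sharp=\sigma_{TUD}$ for the spectra in~(1) and $\sigma_\sharp=\sigma_{q\Phi}$ for the spectra in~(2). From this equality the inclusion $H=\sigma_\sharp(T)\subset\sigma_*(T)=K$ is immediate, since $\sigma_\sharp(T)$ is one of the two sets forming the union.

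For the other inclusion I would argue as follows. Let $\lambda\in\p\sigma_*(T)$. Then $\lambda\in\sigma_*(T)$ and, being a boundary point, $\lambda\notin\inter\,\sigma_*(T)$. The displayed equality then forces $\lambda\in\sigma_\sharp(T)$, whence $\p\sigma_*(T)\subset\sigma_\sharp(T)$, i.e.\ $\p K\subset H$. With both inclusions established, Proposition~\ref{pocetna} yields $\p K\subset\p H$, which is exactly $\p\sigma_*(T)\subset\p\sigma_{TUD}(T)$ in case~(1) and $\p\sigma_*(T)\subset\p\sigma_{q\Phi}(T)$ in case~(2).

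There is no genuine analytic difficulty here: once the union decompositions of the preceding corollaries are in hand, the argument is purely topological and uniform across the list. The only point that needs care is the bookkeeping --- one must match each listed $\sigma_*$ with the corollary providing its decomposition and, crucially, keep the two regimes apart, using $\sigma_{TUD}$ precisely for the classes whose members are guaranteed to have topological uniform descent and $\sigma_{q\Phi}$ for the three classes ($\sigma_{B\W_-},\sigma_{RD},\sigma_{RD}^e$) whose decompositions are available only in the quasi-Fredholm regime.
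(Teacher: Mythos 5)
Your proof is correct and follows essentially the same route as the paper's: the paper likewise derives $\p\sigma_{B\W_+}(T)\subset\sigma_{TUD}(T)$ from the decomposition $\sigma_{B\W_+}(T)=\sigma_{TUD}(T)\cup\inter\,\sigma_{B\W_+}(T)$ of Corollary~\ref{cor-W} (using closedness from Proposition~\ref{closed}) and then applies Proposition~\ref{pocetna} to $\p K\subset H\subset K$, treating the remaining cases ``similarly.'' Your version merely makes the uniform bookkeeping across Corollaries~\ref{cor-W}, \ref{cor-F} and~\ref{cor-Dra} explicit, including the correct split between the $\sigma_{TUD}$ and $\sigma_{q\Phi}$ regimes.
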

\begin{proof}  
 Since $\sigma_{B\W_+}(T)$ is closed (Proposition \ref{closed}), it follows that $\partial\sigma_{B\W_+}(T)\subset\sigma_{B\W_+}(T)$. Hence, by using Corollary \ref{cor-W} (1), we obtain that
 $$
 \partial\sigma_{B\W_+}(T)=\partial\sigma_{B\W_+}(T)\cap  \sigma_{B\W_+}(T)= \p\sigma_{B\W_+}(T)\cap\sigma_{TUD}(T)\subset \sigma_{TUD}(T).
 $$
 Now from $\partial\sigma_{B\W_+}(T)\subset \sigma_{TUD}(T)\subset \sigma_{B\W_+}(T)$, according to Proposition \ref{pocetna}, it follows that $\partial\sigma_{B\W_+}(T)\subset \p\sigma_{TUD}(T)$.

Similarly for the rest of inclusions.
\end{proof}

It is known  that \cite[Theorem 1.65 (i)]{Ai}
$$
\partial\sigma_{\Phi}(T)\cap \acc\, \sigma_{\Phi}(T)\subset\sigma_{Kt}(T).
$$
We remark that it holds more than this: $
\partial\sigma_{\Phi}(T)\cap \acc\, \sigma_{\Phi}(T)\subset\p\sigma_{TUD}(T) $ (see Corollary \ref{cor-LDe-moved} (5) ).

Further we establish the  inclusions of the similar type for other essential  spectra.

\begin{corollary}\label{cor-W-moved}  Let $T\in L(X)$. Then

 \begin{itemize}


\item[(1)] $\partial\sigma_{\W_+}(T)\cap \acc\, \sigma_{\W_+}(T)\subset\partial\sigma_{\W_+}(T)\cap \sigma_{B\W_+}(T)\subset \partial\sigma_{TUD}(T)$;


\item[(2)] $ \partial \sigma_{\W_-}(T)\cap \acc\, \sigma_{\W_-}(T)\subset\partial\sigma_{TUD}(T)$;

\item[(3)] $ \partial \sigma_{B\W_-}(T)\cap \acc\, \sigma_{B\W_-}(T)\subset
\partial \sigma_{B\W_-}(T)\cap \acc\, \sigma_{\W_-}(T)\subset\partial\sigma_{TUD}(T)$;


\item[(4)] $\partial\sigma_{\W}(T)\cap \acc\, \sigma_{\W}(T)\subset\partial\sigma_{\W}(T)\cap \sigma_{B\W}(T)\subset \partial\sigma_{TUD}(T)$.
\end{itemize}
\end{corollary}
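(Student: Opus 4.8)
The plan is to handle the four parts uniformly, each being a chain of two inclusions (part~(2) a single one). I would first dispose of the ``left'' inclusions, which are purely set-theoretic. For parts~(1) and~(4), the identities $\sigma_{B\W_+}(T)=\sigma_{TUD}(T)\cup\acc\,\sigma_{\W_+}(T)$ and $\sigma_{B\W}(T)=\sigma_{TUD}(T)\cup\acc\,\sigma_{\W}(T)$ from Corollary~\ref{cor-W} give $\acc\,\sigma_{\W_+}(T)\subset\sigma_{B\W_+}(T)$ and $\acc\,\sigma_{\W}(T)\subset\sigma_{B\W}(T)$; intersecting with the relevant boundary yields the first inclusions. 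For part~(3), monotonicity of $\acc$ applied to $\sigma_{B\W_-}(T)\subset\sigma_{\W_-}(T)$ gives $\acc\,\sigma_{B\W_-}(T)\subset\acc\,\sigma_{\W_-}(T)$, and intersecting with $\p\sigma_{B\W_-}(T)$ finishes the first inclusion.

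For the substantive inclusion, the one terminating in $\p\sigma_{TUD}(T)$, I would fix $\lambda$ in the left-hand set and prove the two memberships $\lambda\in\sigma_{TUD}(T)$ and $\lambda\in\overline{\CC\setminus\sigma_{TUD}(T)}$. Since $\sigma_{TUD}(T)$ is closed (Proposition~\ref{closed}) and $\p\sigma_{TUD}(T)=\sigma_{TUD}(T)\cap\overline{\CC\setminus\sigma_{TUD}(T)}$, these two facts together place $\lambda\in\p\sigma_{TUD}(T)$.

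The membership $\lambda\in\sigma_{TUD}(T)$ is where parts~(1),(4) and parts~(2),(3) diverge, and the latter pair is the main obstacle. For~(1) and~(4), Corollary~\ref{cor-W}(1),(3),(4) gives $\sigma_{B\W_+}(T)=\sigma_{TUD}(T)\cup\inter\,\sigma_{\W_+}(T)$ and $\sigma_{B\W}(T)=\sigma_{TUD}(T)\cup\inter\,\sigma_{\W}(T)$; as $\lambda\in\p\sigma_{\W_+}(T)$ (resp.\ $\p\sigma_{\W}(T)$) is not an interior point, it must lie in $\sigma_{TUD}(T)$. For~(2) and~(3) the analogous decomposition in Corollary~\ref{cor-W}(2) is written with $\sigma_{q\Phi}(T)$, which is in general strictly larger than $\sigma_{TUD}(T)$, so it only yields $\lambda\in\sigma_{q\Phi}(T)$; to reach the sharper conclusion I would argue by contradiction. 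If $\lambda\notin\sigma_{TUD}(T)$, then $T-\lambda I$ has TUD, so Theorem~\ref{F-+} makes its statements (1)--(4) equivalent. Since $\lambda\in\acc\,\sigma_{\W_-}(T)$ falsifies statement~(1), it falsifies statement~(2) in part~(2) (placing $\lambda\in\inter\,\sigma_{\W_-}(T)$) and statement~(4) in part~(3) (placing $\lambda\in\inter\,\sigma_{B\W_-}(T)$); either conclusion contradicts $\lambda\in\p\sigma_{\W_-}(T)$, respectively $\lambda\in\p\sigma_{B\W_-}(T)$, since a boundary point is never interior. This forces $\lambda\in\sigma_{TUD}(T)$.

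Finally, for $\lambda\in\overline{\CC\setminus\sigma_{TUD}(T)}$ I would exploit that $\lambda$ lies on the boundary of the spectrum appearing in the hypothesis, so there is a sequence $\mu_n\to\lambda$ with $T-\mu_n I$ in the complementary class: upper semi-Weyl in~(1), lower semi-Weyl in~(2), lower semi-B-Weyl in~(3), and Weyl in~(4). Each of these operators has TUD: a semi-Fredholm operator has finite nullity or finite defect and closed ranges of all its powers, hence uniform descent and TUD, while a lower semi-B-Weyl operator is quasi-Fredholm and therefore also has TUD. Consequently $\mu_n\notin\sigma_{TUD}(T)$ for all $n$, so $\lambda\in\overline{\CC\setminus\sigma_{TUD}(T)}$, and combined with $\lambda\in\sigma_{TUD}(T)$ this gives $\lambda\in\p\sigma_{TUD}(T)$, completing each part.
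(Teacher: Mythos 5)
Your proposal is correct and takes essentially the same route as the paper's proof: the left-hand inclusions from the $\acc$-form of the decompositions in Corollary \ref{cor-W}, membership of $\lambda$ in $\sigma_{TUD}(T)$ via the $\inter$-form for parts (1),(4) and via the equivalences of Theorem \ref{F-+} at points where $T-\lambda I$ has TUD for parts (2),(3), and finally $\lambda\in\partial\sigma_{TUD}(T)$ by approximating $\lambda$ with points $\mu_n$ at which $T-\mu_n I$ is semi-Weyl (resp.\ lower semi-B-Weyl, hence quasi-Fredholm) and therefore has TUD. The only cosmetic differences are that you phrase the Theorem \ref{F-+} step as a proof by contradiction where the paper argues contrapositively, and that you make explicit the packaging $\partial K=K\cap\overline{\CC\setminus K}$ for the closed set $K=\sigma_{TUD}(T)$, which the paper leaves implicit in ``proceeding as in the proof of (1)''.
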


\begin{proof} (1) By using the first equality in Corollary \ref{cor-W} (1)\ 
 we get $$\partial \sigma_{\W_+}(T)\cap \sigma_{B\W_+}(T)=\partial \sigma_{\W_+}(T)\cap (\sigma_{TUD}(T)\cup\inter\, \sigma_{\W_+}(T))= \partial \sigma_{\W_+}(T)\cap \sigma_{TUD}(T),$$
 and therefore
\begin{equation}\label{a0}
  \partial \sigma_{\Phi_+}(T)\cap \sigma_{B\W_+}(T)\subset   \sigma_{TUD}(T).
 \end{equation}

 Let $ \lambda \in  \partial \sigma_{\W_+}(T)\cap \sigma_{B\W_+}(T)$.  Then there exists a sequence $ (\lambda_n)$ which converges to $\lambda $ and  such that $T- \lambda_n$ is upper semi-Weyl  for every $n\in\NN$. As  $T- \lambda_n$ is  upper semi-Fredholm, then it has TUD and so $\lambda_n  \notin \sigma_{TUD}(T)$, $n\in\NN$.   As $ (\lambda_n)$ converges to $\lambda $ and since $\lambda\in \sigma_{TUD}(T)$ according to \eqref{a0}, we get that $ \lambda \in \partial \sigma_{TUD}(T).$ Therefore, $\partial\sigma_{\W_+}(T)\cap \sigma_{B\W_+}(T)\subset \partial\sigma_{TUD}(T)$.

 Further, from the second equality in Corollary \ref{cor-W} (1) it follows that $\partial\sigma_{\W_+}(T)\cap \acc\, \sigma_{\W_+}(T)\subset\partial\sigma_{\W_+}(T)\cap \sigma_{B\W_+}(T)$.

(2) Let $T-\lambda I$ have TUD and let $\lambda\in \partial\sigma_{\W_-}(T)$. Since $\lambda\notin \inter\, \sigma_{\W_-}(T)$, according to Theorem \ref{F-+} we conclude that
$\lambda\notin \acc\, \sigma_{\W_-}(T)$. Therefore, $ \partial \sigma_{\W_-}(T)\cap \acc\, \sigma_{\W_-}(T)\subset\sigma_{TUD}(T)$. Now proceeding as in the proof of (1) we get $ \partial \sigma_{\W_-}(T)\cap \acc\, \sigma_{\W_-}(T)\subset\partial\sigma_{TUD}(T)$.

(3)
Suppose that $T-\lambda I$ has TUD and  $\lambda\in \partial\sigma_{B\W_-}(T)$. From  Theorem \ref{F-+} it follows that $\lambda\notin\acc\, \sigma_{\W_-}(T)$. Thus
$\partial\sigma_{B\W_-}(T) \cap acc\,   \sigma_{\W_-}(T)  \subset\sigma_{TUD}(T)$. As in  the  proof of (1), we obtain that $$\partial\sigma_{B\W_-}(T) \cap acc\,   \sigma_{\W_-}(T)  \subset\p\sigma_{TUD}(T).$$
 From $\sigma_{B\W_-}(T)\subset \sigma_{\W_-}(T)$ it follows that $\acc\, \sigma_{B\W_-}(T)\subset \acc\, \sigma_{\W_-}(T)$, which implies the first inclusion in (3).

 (4)  Similarly to the proof of (1) by using Corollary \ref{cor-W}  (3).
%
%
%
%
\end{proof}


%
%
%
%
\begin{corollary}\label{cor-LDe-moved}  Let $T\in L(X)$.
 \begin{itemize}

\item[(1)] $\partial\sigma_{\Phi_+}(T)\cap \acc\, \sigma_{\Phi_+}(T)\subset\partial\sigma_{\Phi_+}(T)\cap \sigma_{LD}^e(T)\subset \partial\sigma_{TUD}(T)$;
\item[(2)] $\partial\sigma_{\Phi_-}(T)\cap \acc\, \sigma_{\Phi_-}(T)\subset\partial\sigma_{\Phi_-}(T)\cap \sigma_{dsc}^e(T)\subset \partial\sigma_{TUD}(T)$;

\item[(3)]
$\partial \sigma_{RD}^e(T)\cap \acc\, \sigma_{RD}^e(T)\subset
\partial \sigma_{RD}^e(T)\cap \acc\, \sigma_{\Phi_-}(T)\subset \partial \sigma_{RD}^e(T)\cap \sigma_{dsc}^e(T)=\break
\partial \sigma_{RD}^e(T)\cap \sigma_{TUD}(T)\subset \partial\sigma_{TUD}(T)$;

\item[(4)] $\partial\sigma_{\Phi_-}(T)\cap  \sigma_{RD}^e(T)=\partial\sigma_{\Phi_-}(T)\cap  \sigma_{q\Phi}(T)\subset\partial\sigma_{q\Phi}(T)$;

\item[(5)] $\partial\sigma_{\Phi}(T)\cap \acc\, \sigma_{\Phi}(T)\subset\partial\sigma_{\Phi}(T)\cap \sigma_{B\Phi}(T)\subset \partial\sigma_{TUD}(T)$.

\end{itemize}
\end{corollary}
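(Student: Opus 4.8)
The plan is to dispose of all five items by the same two-stage template already used in the proof of Corollary \ref{cor-W-moved}: first use the appropriate identity from Corollary \ref{cor-F} to reduce each asserted intersection to an intersection with $\sigma_{TUD}(T)$ (or, for item (4), with $\sigma_{q\Phi}(T)$), exploiting that the boundary and the interior of a set are disjoint; then run an approximation argument to upgrade containment in $\sigma_{TUD}(T)$ (resp.\ $\sigma_{q\Phi}(T)$) to containment in its boundary, using closedness of these spectra from Proposition \ref{closed}.

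Concretely, for item (1) I would start from the two equalities of Corollary \ref{cor-F} (1), $\sigma_{LD}^e(T)=\sigma_{TUD}(T)\cup\inter\,\sigma_{\Phi_+}(T)=\sigma_{TUD}(T)\cup\acc\,\sigma_{\Phi_+}(T)$. The second form gives $\acc\,\sigma_{\Phi_+}(T)\subset\sigma_{LD}^e(T)$, which is precisely the first inclusion. For the second inclusion I would intersect the first form with $\partial\sigma_{\Phi_+}(T)$ and use $\partial\sigma_{\Phi_+}(T)\cap\inter\,\sigma_{\Phi_+}(T)=\emptyset$ to get
$$\partial\sigma_{\Phi_+}(T)\cap\sigma_{LD}^e(T)=\partial\sigma_{\Phi_+}(T)\cap\bigl(\sigma_{TUD}(T)\cup\inter\,\sigma_{\Phi_+}(T)\bigr)=\partial\sigma_{\Phi_+}(T)\cap\sigma_{TUD}(T).$$
Items (2), (4), (5) are entirely parallel: for (2) I feed in Corollary \ref{cor-F} (2) and approximate by lower semi-Fredholm operators; for (5) I feed in Corollary \ref{cor-F} (4) and approximate by Fredholm operators; for (4) I feed in Corollary \ref{cor-F} (3), which produces $\sigma_{q\Phi}(T)$ rather than $\sigma_{TUD}(T)$, so after the identity $\partial\sigma_{\Phi_-}(T)\cap\sigma_{RD}^e(T)=\partial\sigma_{\Phi_-}(T)\cap\sigma_{q\Phi}(T)$ the approximation uses that lower semi-Fredholm operators are quasi-Fredholm. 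Item (3) is a single chain: $\acc\,\sigma_{RD}^e(T)\subset\acc\,\sigma_{\Phi_-}(T)$ since $\sigma_{RD}^e(T)\subset\sigma_{\Phi_-}(T)$; then $\acc\,\sigma_{\Phi_-}(T)\subset\sigma_{dsc}^e(T)$ by Corollary \ref{cor-F} (2); the equality with $\partial\sigma_{RD}^e(T)\cap\sigma_{TUD}(T)$ comes from intersecting $\sigma_{dsc}^e(T)=\sigma_{TUD}(T)\cup\inter\,\sigma_{RD}^e(T)$ with $\partial\sigma_{RD}^e(T)$; and the last inclusion is the approximation argument, now approximating by right essentially Drazin invertible operators, which have finite essential descent and hence TUD.

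The set-theoretic manipulations and the disjointness of boundary and interior are routine. The one step requiring care — the main obstacle in each item — is the final approximation argument: given $\lambda$ in the reduced intersection, membership $\lambda\in\partial\sigma_{\bullet}(T)$ supplies a sequence $\lambda_n\to\lambda$ with $T-\lambda_n I$ lying outside $\sigma_{\bullet}(T)$, i.e.\ in a semi-Fredholm (or right essentially Drazin invertible) class; one must then verify that every such $T-\lambda_n I$ has TUD (respectively is quasi-Fredholm), so that $\lambda_n\notin\sigma_{TUD}(T)$ (respectively $\lambda_n\notin\sigma_{q\Phi}(T)$). Since the limit $\lambda$ lies in the relevant spectrum while being approached from its complement, closedness from Proposition \ref{closed} forces $\lambda\in\partial\sigma_{TUD}(T)$ (respectively $\partial\sigma_{q\Phi}(T)$). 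Everything beyond this verification follows the template of Corollary \ref{cor-W-moved} verbatim.
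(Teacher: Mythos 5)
Your proposal is correct and coincides with the paper's own argument: the paper disposes of this corollary with the one-line remark that it follows from Corollary \ref{cor-F} in the same way that Corollary \ref{cor-W-moved} follows from Corollary \ref{cor-W}, which is exactly the two-stage template you spell out (reduction via the $\sigma_{TUD}(T)\cup\inter$ and $\sigma_{TUD}(T)\cup\acc$ identities, using disjointness of boundary and interior, followed by the approximating-sequence argument based on the fact that semi-Fredholm operators have TUD and are quasi-Fredholm, and that right essentially Drazin invertible operators have finite essential descent and hence TUD). One cosmetic remark: the final step does not actually need closedness from Proposition \ref{closed}, since $\lambda\in\sigma_{TUD}(T)$ together with $\lambda_n\to\lambda$ and $\lambda_n\notin\sigma_{TUD}(T)$ already places $\lambda$ in $\partial\sigma_{TUD}(T)$ (and likewise for $\sigma_{q\Phi}(T)$).
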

\begin{proof} It follows from Corollary \ref{cor-F}, similarly to the proof of Corollary \ref{cor-W-moved}.
\end{proof}

\begin{corollary}\label{cor-a-moved}  Let $T\in L(X)$. Then
\begin{itemize}

\item[(1)] $\partial\sigma_{ap}(T)\cap \acc\, \sigma_{ap}(T)\subset\partial\sigma_{ap}(T)\cap \sigma_{LD}(T)\subset \partial\sigma_{TUD}(T)$;

\item[(2)] $\partial\sigma_{\B_+}(T)\cap \acc\, \sigma_{\B_+}(T)\subset\partial\sigma_{\B_+}(T)\cap \sigma_{LD}(T)\subset \partial\sigma_{TUD}(T)$;

\item[(3)] $\partial\sigma_p(T)\cap \acc\, \sigma_{p}(T)\subset\partial\sigma_p(T)\cap \sigma_{LD}(T)\subset \sigma_{TUD}(T)$;
\item[(4)] $\partial\sigma_{su}(T)\cap \acc\, \sigma_{su}(T)\subset\partial\sigma_{su}(T)\cap \sigma_{dsc}(T)\subset \partial\sigma_{TUD}(T)$;

\item[(5)] $\partial \sigma_{cp}(T)\cap \acc\, \sigma_{cp}(T)\subset \partial \sigma_{cp}(T)\cap \sigma_{dsc}(T)\subset \sigma_{TUD}(T)$;

\item[(6)] $\partial \sigma_{\B_-}(T)\cap \acc\, \sigma_{\B_-}(T)\subset \partial \sigma_{\B_-}(T)\cap \sigma_{dsc}(T)\subset \p\sigma_{TUD}(T)$;

\item[(7)] $\partial \sigma_{RD}(T)\cap \acc\, \sigma_{RD}(T)\subset$ 
\ $\partial \sigma_{RD}(T)\cap \sigma_{dsc}(T)=
\partial \sigma_{RD}(T)\cap \sigma_{TUD}(T)\subset \partial\sigma_{TUD}(T)$;

\item[(8)] $\partial\sigma_{su}(T)\cap \sigma_{RD}(T)\subset \partial\sigma_{q\Phi}(T)$;

\item[(9)] $\partial \sigma_{cp}(T)\cap \sigma_{RD}(T)=\partial \sigma_{cp}(T)\cap \sigma_{q\Phi}(T)\subset \sigma_{q\Phi}(T)$;

\item[(10)] $\partial\sigma(T)\cap \acc\, \sigma(T)\subset\partial\sigma(T)\cap \sigma_D(T)\subset \partial\sigma_{TUD}(T)$;

\item[(11)] $\partial\sigma_{\B}(T)\cap \acc\, \sigma_{\B}(T)\subset\partial\sigma_{\B}(T)\cap \sigma_D(T)\subset \partial\sigma_{TUD}(T)$.

\end{itemize}
\end{corollary}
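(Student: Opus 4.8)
The plan is to treat all eleven inclusions uniformly, following exactly the template of the proof of Corollary \ref{cor-W-moved}, but feeding in the decomposition identities of Corollary \ref{cor-Dra} in place of those of Corollary \ref{cor-W}. For each item Corollary \ref{cor-Dra} supplies a formula of the shape
\[
\sigma_{\bullet}(T)=\sigma_{\sharp}(T)\cup\inter\,\sigma_*(T)=\sigma_{\sharp}(T)\cup\acc\,\sigma_*(T),
\]
where $\sigma_\bullet$ is the relevant B-spectrum ($\sigma_{LD}$, $\sigma_{dsc}$, $\sigma_{RD}$ or $\sigma_D$), $\sigma_\sharp$ is either $\sigma_{TUD}$ or $\sigma_{q\Phi}$, and $\sigma_*$ is the classical spectrum of the item ($\sigma_{ap},\sigma_p,\sigma_{\B_+},\sigma_{su},\sigma_{cp},\sigma_{\B_-},\sigma_{RD},\sigma,\sigma_{\B}$). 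From the second equality one reads off $\acc\,\sigma_*(T)\subset\sigma_\bullet(T)$, which immediately gives the first inclusion of each item, namely $\partial\sigma_*(T)\cap\acc\,\sigma_*(T)\subset\partial\sigma_*(T)\cap\sigma_\bullet(T)$.

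For the second inclusion I would first invoke the disjointness $\partial\sigma_*(T)\cap\inter\,\sigma_*(T)=\emptyset$ together with the first equality to collapse
\[
\partial\sigma_*(T)\cap\sigma_\bullet(T)=\partial\sigma_*(T)\cap\bigl(\sigma_\sharp(T)\cup\inter\,\sigma_*(T)\bigr)=\partial\sigma_*(T)\cap\sigma_\sharp(T)\subset\sigma_\sharp(T).
\]
This already yields the weaker conclusions phrased with $\subset\sigma_{TUD}(T)$ in items (3), (5) and with $\subset\sigma_{q\Phi}(T)$ in item (9), and the same collapse produces the intermediate equalities displayed in items (7) and (9). To upgrade $\partial\sigma_*(T)\cap\sigma_\sharp(T)\subset\sigma_\sharp(T)$ to $\subset\partial\sigma_\sharp(T)$ in the remaining items, I would repeat the approximation argument of Corollary \ref{cor-W-moved}(1): given $\lambda\in\partial\sigma_*(T)\cap\sigma_\sharp(T)$, choose $\lambda_n\to\lambda$ with $\lambda_n\notin\sigma_*(T)$; then $T-\lambda_n I$ lies in the complementary class (bounded below, surjective, semi-Browder, right Drazin or invertible), each such operator has TUD (respectively is quasi-Fredholm), whence $\lambda_n\notin\sigma_\sharp(T)$. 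Since $\lambda\in\sigma_\sharp(T)$ and $\sigma_\sharp(T)$ is closed by Proposition \ref{closed}, this forces $\lambda\in\partial\sigma_\sharp(T)$.

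The main obstacle---in fact the only point requiring care---is precisely this last upgrade, and it is exactly what splits the items into two kinds. For $\sigma_*\in\{\sigma_{ap},\sigma_{\B_+},\sigma_{su},\sigma_{\B_-},\sigma_{RD},\sigma,\sigma_{\B}\}$ the complementary operators are at worst semi-Fredholm (or better) and so automatically have TUD or are quasi-Fredholm, and the boundary upgrade goes through, giving $\subset\partial\sigma_{TUD}(T)$ in (1),(2),(4),(6),(7),(10),(11) and $\subset\partial\sigma_{q\Phi}(T)$ in (8). By contrast, for $\sigma_*=\sigma_p$ (item (3)) and $\sigma_*=\sigma_{cp}$ (items (5),(9)) the complementary operators are merely injective, respectively of dense range, and carry no topological-uniform-descent or quasi-Fredholm information; the approximating $\lambda_n$ need not lie outside $\sigma_\sharp(T)$, so the argument stops at $\subset\sigma_{TUD}(T)$, respectively $\subset\sigma_{q\Phi}(T)$, which is exactly what is claimed. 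Thus the whole proof reduces to bookkeeping the correct instance of Corollary \ref{cor-Dra} for each item and observing this dichotomy; no new estimate beyond Grabiner's punctured-neighbourhood machinery already packaged in Corollary \ref{cor-Dra} is needed.
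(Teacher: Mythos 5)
Your proof is correct and takes essentially the same route as the paper, whose proof of this corollary simply cites Corollary \ref{cor-Dra} while tacitly reusing the template worked out in the proof of Corollary \ref{cor-W-moved}: collapse $\partial\sigma_*(T)\cap\sigma_\bullet(T)$ via the $\inter$/$\acc$ decompositions, then run the approximating-sequence argument to upgrade membership in $\sigma_{TUD}(T)$ (or $\sigma_{q\Phi}(T)$) to membership in its boundary. Your dichotomy — that the upgrade fails for $\sigma_p$ and $\sigma_{cp}$ because merely injective or dense-range operators carry no TUD or quasi-Fredholm information — is exactly why items (3), (5) and (9) are stated without the boundary, so nothing is missing.
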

\begin{proof} It follows from  Corollary  \ref{cor-Dra}.
\end{proof}

\begin{remark} \rm  For  the operator $T$ in Example \ref{primer}, from Remark \ref{remark1},  we can conclude  that  $0\in \partial\sigma_*(T)$ where $\sigma_*\in\{\sigma_{su}, \sigma_{\Phi_-}, \sigma_{\W_-}, \sigma_{RD}, \sigma_{B\W_-}, \sigma_{RD}^e \}$. As $T$ has TUD, we have that $0\notin\sigma_{TUD}(T)$. So, in the inclusions  (2) in Corollary \ref{pocetna-rub},  as well as
in the inclusion (4)  in Corollary
\ref{cor-LDe-moved}, and
in the inclusion (8)  in Corollary \ref{cor-a-moved},   $\sigma_{q\Phi}(T)$ can not be replaced by $\sigma_{TUD}(T)$.
\end{remark}

In  the proof of the next theorem we use the following inclusions:

\bigskip

{\footnotesize
$$\begin{array}{ccccccccccc}
&&&&   \sigma_{LD}^e(T) & \subset &    \sigma_{B{\W_+}}(T) &  \subset  & \sigma_{LD}(T)& & \\
&&& \rotatebox{20}{$\subset$} & & \rotatebox{-20}{$\subset$}&
&\!\rotatebox{-20}{$\subset$}& & \rotatebox{-20}{$\subset$}&\\
 \sigma_{TUD}(T) &\subset &  \sigma_{q\Phi}(T) & \subset &
\sigma_{Kt}(T) & \subset &  \sigma_{{ B\Phi}}(T)&
  \subset  & \sigma_{{ B\W}}(T)&\subset&   \sigma_{D}(T).\\
&&& \rotatebox{-20}{$\subset$} & & \rotatebox{20}{$\subset$}&
&\rotatebox{20}{$\subset$}& & \rotatebox{20}{$\subset$}&\\
&\rotatebox{-20}{$\subset$}&&&   \sigma_{RD}^e(T)& \subset&  \sigma_{B{\W_-}}(T) & \subset & \sigma_{RD}(T) & &\\
&&& \rotatebox{20}{$\subset$} & & &
&\rotatebox{20}{$\subset$}& & &\\
&&\sigma_{dsc}^e(T)&  &\subset &  & \sigma_{dsc}(T) & & & &\\
\end{array}$$

}

\bigskip

%

\begin{theorem}\label{eta} Let  $T\in L(X)$. Then
\begin{enumerate}
\item
%
%
%

{\footnotesize
$$\begin{array}{ccccccccccc}
&&&&   \p\sigma_{LD}(T) & \subset &    \p\sigma_{B{\W_+}}(T) &  \subset  &\p \sigma_{LD}^e(T)& & \\
&&& \rotatebox{20}{$\subset$} & & \rotatebox{20}{$\subset$}& &\!\rotatebox{20}{$\subset$}& & \rotatebox{-20}{$\subset$}&\\
&& \p\sigma_{D}(T)  & \subset & \p \sigma_{{ B\W}}(T)&
  \subset  & \p\sigma_{{ B\Phi}}(T)&\subset& & &\p \sigma_{TUD}(T),\\
&&&\rotatebox{-20}{$\subset$} & & & &\rotatebox{-20}{$\subset$}& & \rotatebox{20}{$\subset$}&\\
&\ &&&   & \p\sigma_{dsc}(T)\ &   & \subset & \p\sigma_{dsc}^e(T) & &\\
\end{array}$$
}

\bigskip

{\footnotesize
$$\begin{array}{ccccccccccc}
&&&&  \p \sigma_{LD}(T) & \subset &    \p\sigma_{B{\W_+}}(T) &  \subset  & \p\sigma_{LD}^e(T)& & \\
&&& \rotatebox{20}{$\subset$} & & \rotatebox{20}{$\subset$}&
&\!\rotatebox{20}{$\subset$}& & \rotatebox{-20}{$\subset$}&\\
  & &  \p\sigma_{D}(T) & \subset &
\p\sigma_{B\W}(T) & \subset &  \p\sigma_{{ B\Phi}}(T)&
  \subset  & &&  \p \sigma_{q\Phi}(T),\\
&&& \rotatebox{-20}{$\subset$} & & \rotatebox{-20}{$\subset$}&
&\rotatebox{-20}{$\subset$}& & \rotatebox{20}{$\subset$}&\\
& &&&   \p\sigma_{RD}(T)& \subset& \p \sigma_{B{\W_-}}(T) & \subset & \p\sigma_{RD}^e(T) & &\\
\end{array}$$
}

\bigskip

{\footnotesize
$$\begin{array}{ccccccccccc}
 \p\sigma_{D}(T) &\subset &
 \p \sigma_{{ B\W}}(T)&
  \subset  & \p\sigma_{{ B\Phi}}(T)&\subset&  \p\sigma_{Kt}(T).\\
\end{array}$$
}

\item
\begin{eqnarray*}
  \eta\sigma_{TUD}(T)&=&\eta\sigma_{q\Phi}(T)=\eta\sigma_{Kt}(T)=\eta\sigma_{B\Phi}(T)=\eta\sigma_{B\W}(T)=\eta\sigma_{D}(T)\\
  &=&\eta\sigma_{LD}^e(T)=\eta\sigma_{B\W_+}(T)=\eta\sigma_{LD}(T)\\&=&\eta\sigma_{RD}^e(T)=\eta\sigma_{B\W_-}(T)=\eta\sigma_{RD}(T)\\&=&
  \eta\sigma_{dsc}^e(T)=\eta\sigma_{dsc}(T).
\end{eqnarray*}

\item
The set $\sigma_D(T)$ consists of $\sigma_{*}(T)$ and possibly some holes in $\sigma_{*}(T)$ where
 $\sigma_{*}\in\{\sigma_{q\Phi},\break\sigma_{Kt},\sigma_{B\Phi},\sigma_{B\W}, \sigma_{LD}^e, \sigma_{B{\W_+}}, \sigma_{LD},\sigma_{RD}^e, \sigma_{B\W_-}, \sigma_{RD}, \sigma_{dsc}^e, \sigma_{dsc}\}$.

\end{enumerate}
\end{theorem}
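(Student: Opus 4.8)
The plan is to reduce all three parts to repeated applications of Proposition \ref{pocetna}, whose hypothesis $\partial K\subset H\subset K$ is precisely the kind of ``sandwich'' produced by the decomposition corollaries of the previous section together with the chain of set inclusions displayed just above the theorem. First I would record the single fact that drives everything: for each spectrum $\sigma_*$ appearing in the list, its boundary lies in the smallest member of the chain to which it is adapted. Concretely, from Corollaries \ref{cor-W}, \ref{cor-F} and \ref{cor-Dra} each such spectrum admits a decomposition $\sigma_*(T)=\sigma_{TUD}(T)\cup\inter\,\sigma_*(T)$ (for the ``upper/left'' spectra $\sigma_{B\W_+},\sigma_{B\W},\sigma_{LD}^e,\sigma_{dsc}^e,\sigma_{B\Phi},\sigma_{LD},\sigma_{dsc},\sigma_D$) or $\sigma_*(T)=\sigma_{q\Phi}(T)\cup\inter\,\sigma_*(T)$ (for the ``lower/right'' spectra $\sigma_{B\W_-},\sigma_{RD}^e,\sigma_{RD}$); since each $\sigma_*(T)$ is closed by Proposition \ref{closed}, subtracting $\inter\,\sigma_*(T)$ gives $\partial\sigma_*(T)\subset\sigma_{TUD}(T)$, respectively $\partial\sigma_*(T)\subset\sigma_{q\Phi}(T)$.

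For part (1) I would treat every arrow $\partial\sigma_A(T)\subset\partial\sigma_B(T)$ in the three diagrams by one uniform argument. Reading off the big inclusion diagram one always has $\sigma_B(T)\subset\sigma_A(T)$, and the boundary fact above together with $\sigma_{TUD}(T)\subset\sigma_B(T)$ (respectively $\sigma_{q\Phi}(T)\subset\sigma_B(T)$), which is again read from the chain, yields $\partial\sigma_A(T)\subset\sigma_B(T)\subset\sigma_A(T)$. Proposition \ref{pocetna} with $K=\sigma_A(T)$ and $H=\sigma_B(T)$ then gives $\partial\sigma_A(T)\subset\partial\sigma_B(T)$. The arrows that land in $\partial\sigma_{TUD}(T)$ or $\partial\sigma_{q\Phi}(T)$ are already Corollary \ref{pocetna-rub}. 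The only point needing care is to match each source spectrum with the correct ``floor'' of the chain: for a lower/right source $\sigma_A$ one must use $\partial\sigma_A(T)\subset\sigma_{q\Phi}(T)$ and then check $\sigma_{q\Phi}(T)\subset\sigma_B(T)$, which holds because every target of those arrows sits above $\sigma_{q\Phi}(T)$ in the chain.

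For part (2) I would specialise the sandwich to $K=\sigma_D(T)$. Since the decomposition of $\sigma_D$ gives $\partial\sigma_D(T)\subset\sigma_{TUD}(T)$ and the chain gives $\sigma_{TUD}(T)\subset\sigma_*(T)\subset\sigma_D(T)$ for every $\sigma_*$ in the list, Proposition \ref{pocetna} delivers $\eta\sigma_*(T)=\eta\sigma_D(T)$ simultaneously for all of them, which is exactly the asserted string of equalities. Part (3) is then immediate from the last sentence of Proposition \ref{pocetna}: with $K=\sigma_D(T)$, $H=\sigma_*(T)$ and the same sandwich $\partial\sigma_D(T)\subset\sigma_*(T)\subset\sigma_D(T)$, the set $\sigma_D(T)$ is obtained from $\sigma_*(T)$ by filling in some of its holes.

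I expect the genuine content to be entirely front-loaded into the decomposition corollaries and Grabiner's punctured-neighbourhood theorem that underlies them, so that the theorem itself is a bookkeeping exercise. The main obstacle is therefore purely organisational: verifying, arrow by arrow, that the two inclusions $\partial\sigma_A(T)\subset\sigma_B(T)$ and $\sigma_B(T)\subset\sigma_A(T)$ needed to invoke Proposition \ref{pocetna} are indeed available, and in particular keeping the $\sigma_{TUD}$/$\sigma_{q\Phi}$ dichotomy straight along the lower semi-B-Weyl, right Drazin and right essentially Drazin branches, where the relevant floor of the chain is $\sigma_{q\Phi}(T)$ rather than $\sigma_{TUD}(T)$.
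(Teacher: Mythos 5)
Your proposal is correct and is essentially the paper's own argument: the paper proves Theorem \ref{eta} by exactly this combination of Proposition \ref{pocetna} sandwiches $\partial K\subset H\subset K$ built from the displayed inclusion chain, compactness (Proposition \ref{closed}), and the boundary facts $\partial\sigma_*(T)\subset\sigma_{TUD}(T)$ resp.\ $\partial\sigma_*(T)\subset\sigma_{q\Phi}(T)$ of Corollary \ref{pocetna-rub}, which you re-derive from the decomposition corollaries in just the way the paper does. Your bookkeeping, including the $\sigma_{TUD}$/$\sigma_{q\Phi}$ dichotomy on the right/lower branches and the simultaneous choice $K=\sigma_D(T)$ for parts (2) and (3), checks out arrow by arrow.
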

\begin{proof}

It follows from  Proposition \ref{pocetna}, the previous inclusions,
  Proposition \ref{closed} and Corollary \ref{pocetna-rub}.
   \end{proof}
From Theorem \ref{eta} and  Remark \ref{the same connected hulls} it follows that if one of $\sigma_{TUD}(T)$, $\sigma_{q\Phi}(T)$, $\sigma_{Kt}(T)$, $\sigma_{B\Phi}(T)$, $\sigma_{B\W}(T)$, $\sigma_{D}(T)$, $\sigma_{LD}^e(T)$, $\sigma_{B\W_+}(T)$, $\sigma_{LD}(T)$, $\sigma_{RD}^e(T)$, $\sigma_{B\W_-}(T)$, $\sigma_{RD}(T)$, $\sigma_{dsc}^e(T)$ and $\sigma_{dsc}(T)$ is finite (countable), then all of them are equal and therefore finite (countable). This result is already obtained in \cite[Corollary 3.4]{Q. Jiang}, but our method of proofing is different.

\begin{example}
{\em Let $Q: \ell_2(\NN) \to \ell_2(\NN)$ be the operator defined by
\[Q(\xi_1, \xi_2, \xi_3, \ldots)=(0, \xi_1, \frac{1}{2}\xi_2, \frac{1}{3}\xi_3,
\ldots), \; \; \; (\xi_1, \xi_2, \xi_3, \ldots) \in \ell_2(\NN).\]
From $\lim_{n \to \infty} ||Q^n||^{\frac{1}{n}}=\lim_{n \to \infty}
(\frac{1}{n!})^{\frac{1}{n}}=0$ we see that $Q$ is quasinilpotent.
It follows that $0$ is not an accumulation point of $\sigma_{\bf R}(Q)$ for  $\R\in\{{\bf R_i}:1\le i\le 10\}\cup\{{\bf  R^a_4, R^a_5}\}\cup\{\W_+(X),\W_-(X),\W(X),\Phi(X), \B(X), L(X)^{-1}\}$.
As $Q$ is the limit of finite rank operators, $Q$ is compact. Since $Q^n$ is compact and
$R(Q^n)$ is infinite dimensional, we conclude that $R(Q^n)$ is not
closed for every $n \in \NN$. Therefore, $Q$ is not quasi-Fredholm and so, $0\in \sigma_{qF}(Q)\subset\sigma(Q)=\{0\}$. Thus $\sigma_{qF}(Q)=\{0\}$, which implies that $\sigma_{TUD}(Q)=\{0\}$, that is $Q$ does not have TUD. It means that the condition that $T$ has TUD  can not be
omitted in Theorems \ref{F+-}, \ref{W}, \ref{SF+}, \ref{SF-}, \ref{F}, \ref{a}, \ref{su}, \ref{Dra}, \ref{final}. Also,  the condition that $T$ is quasi-Fredholm can not be
omitted in  Theorems \ref{F-+}, \ref{SF-}, \ref{su}, \ref{final}.
}
\end{example}

\section{Applications}

An operator $T\in L(X)$ is  {\it meromorphic} if its non-zero spectral points are poles of its resolvent. We say that $T$ is {\it polinomially meromorphic} if there exists non-trivial polynomial\ $p$\ \ such that $p(T)$ is meromorphic.

In \cite[Theorem 2.11]{P13} it is given a characterization of meromorphic operators in terms of B-Fredholm operators: an operator $T\in L(X)$ is meromorphic  if and only if $\sigma_{B\Phi}(T)\subset\{0\}$. This result is extended by Q. Jiang, H. Zhong and S. Zhang in \cite[Corollary 3.3]{Q. Jiang} by including   the characterization of meromorphic operators in terms of operators of topological uniform descent: $T\in L(X)$ is meromorphic  if and only if $\sigma_{TUD}(T)\subset\{0\}$. Their proof is based on the local constancy of the mappings $\lambda\mapsto K(\lambda I-T)+H_0(\lambda I-T)$ and $\lambda\mapsto \overline{K(\lambda I-T)\cap H_0(\lambda I-T)
}$ and results about SVEP established in \cite{JiangJMAA}.
  We obtain the same assertion as a corollary of Theorem \ref{eta} and our method of proofing is rather different.
\begin{theorem}
Let $T\in L(X)$. Then the following conditions are equivalent:

1. $T$ is a meromorphic operator;

2. $\sigma_{TUD}(T)\subset\{0\}$;

3. $\sigma_{B\Phi}(T)\subset\{0\}$.

%
\end{theorem}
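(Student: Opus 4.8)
The plan is to route everything through the Drazin spectrum and then appeal to the connected-hull machinery already assembled in Section~3. First I would record the classical identification of meromorphicity with a single spectral inclusion. A non-zero point $\lambda\in\sigma(T)$ is a pole of the resolvent of $T$ exactly when $T-\lambda I$ is Drazin invertible, while for $\lambda\notin\sigma(T)$ the operator $T-\lambda I$ is invertible and hence trivially Drazin invertible (with zero ascent and descent). Consequently, every non-zero spectral point of $T$ is a pole if and only if $T-\lambda I$ is Drazin invertible for every $\lambda\neq 0$, that is, if and only if $\sigma_D(T)\subset\{0\}$. This reduces the first condition to the inclusion $\sigma_D(T)\subset\{0\}$.

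Next I would invoke Theorem \ref{eta}(2), which in particular gives
\[
\eta\sigma_{TUD}(T)=\eta\sigma_{B\Phi}(T)=\eta\sigma_D(T).
\]
Combined with Remark \ref{the same connected hulls}, the equality of these connected hulls forces the following: as soon as one of the sets $\sigma_{TUD}(T)$, $\sigma_{B\Phi}(T)$, $\sigma_D(T)$ is at most countable, all three coincide. Since any subset of $\{0\}$ is finite, this yields the chain of equivalences
\[
\sigma_D(T)\subset\{0\}\ \Longleftrightarrow\ \sigma_{B\Phi}(T)\subset\{0\}\ \Longleftrightarrow\ \sigma_{TUD}(T)\subset\{0\}.
\]
Putting this together with the reduction of the first paragraph gives
$\text{(1)}\Leftrightarrow\sigma_D(T)\subset\{0\}\Leftrightarrow\text{(3)}\Leftrightarrow\text{(2)}$,
which is the assertion.

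The only step carrying genuine content is the pole-versus-Drazin identification in the first paragraph; everything else is a formal consequence of Theorem \ref{eta} and Remark \ref{the same connected hulls}, so I do not expect a serious obstacle, the heavy lifting (the coincidence of all the connected hulls, resting on Grabiner's punctured-neighbourhood theorem and the corollaries of Section~2) having already been carried out. One point to be careful about is the direction $\sigma_{TUD}(T)\subset\{0\}\Rightarrow\sigma_{B\Phi}(T)\subset\{0\}$: here the trivial set inclusion $\sigma_{TUD}(T)\subset\sigma_{B\Phi}(T)$ runs the wrong way, so I must genuinely use the finiteness argument via the common connected hull to upgrade the smaller spectrum's finiteness to an equality of the two sets. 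The reverse implication $\sigma_{B\Phi}(T)\subset\{0\}\Rightarrow\sigma_{TUD}(T)\subset\{0\}$, by contrast, is immediate from that same inclusion.
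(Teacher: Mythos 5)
Your proposal is correct and follows essentially the same route as the paper: the paper likewise reduces meromorphicity to $\sigma_D(T)\subset\{0\}$ and then cites Theorem \ref{eta} together with Remark \ref{the same connected hulls} (equality of the connected hulls of $\sigma_{TUD}$, $\sigma_{B\Phi}$ and $\sigma_D$ forcing the spectra themselves to coincide once one of them is finite). Your added remark about the direction $\sigma_{TUD}(T)\subset\{0\}\Rightarrow\sigma_{B\Phi}(T)\subset\{0\}$ requiring the hull argument, rather than the trivial inclusion, is exactly the point implicit in the paper's appeal to that machinery.
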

\begin{proof} Since $T$ is a meromorphic operator if and only if $\sigma_{D}(T)\subset\{0\}$, the assertion follows from Theorem \ref{eta} (see the comment after Theorem \ref{eta}).
\end{proof}

\bigskip

For $T\in L(X)$ set $\rho_{TUD}(T)=\CC\setminus\sigma_{TUD}(T)$.

\begin{theorem}\label{omega} Let $T\in L(X)$. If $\Omega$ is a component of $\rho_{TUD}(T)$, then $\Omega\subset \sigma_D(T)$ or $\Omega\setminus E\subset\rho(T)$, where $E=\{\lambda\in\CC:\lambda\ {\rm is\ the\ pole\ of\ the \ resolvent\ od\ }T\}$.

\end{theorem}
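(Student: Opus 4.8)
The plan is to exploit that $T-\lambda I$ has TUD at \emph{every} point of $\Omega$, which via Theorem~\ref{Dra} pins down $\sigma_D(T)$ inside $\Omega$ tightly enough to make it relatively clopen; connectedness of $\Omega$ then yields the dichotomy.

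First I would reformulate the conclusion in purely spectral terms. Since $T-\lambda I$ is Drazin invertible precisely when $a(T-\lambda I)<\infty$ and $\delta(T-\lambda I)<\infty$, and in that situation the common value $p$ of ascent and descent gives $\lambda\in\rho(T)$ if $p=0$ and makes $\lambda$ a pole of the resolvent of order $p$ if $p\ge 1$, I obtain
\[
\CC\setminus\sigma_D(T)=\rho(T)\cup E .
\]
Hence the assertion ``$\Omega\setminus E\subset\rho(T)$'' is equivalent to ``$\Omega\cap\sigma_D(T)=\emptyset$'', and the whole theorem reduces to showing that $\Omega\cap\sigma_D(T)$ equals either $\emptyset$ or $\Omega$.

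Next I would prove that $\Omega\cap\sigma_D(T)$ is both open and closed in $\Omega$. For openness, each $\lambda\in\Omega\subset\rho_{TUD}(T)$ has TUD, so Theorem~\ref{Dra} gives $\lambda\in\sigma_D(T)\Leftrightarrow\lambda\in\inter\,\sigma(T)$; thus
\[
\Omega\cap\sigma_D(T)=\Omega\cap\inter\,\sigma(T),
\]
which is open because $\inter\,\sigma(T)$ is open. For closedness, $\sigma_D(T)$ is compact by Proposition~\ref{closed}, so $\Omega\cap\sigma_D(T)$ is closed in $\Omega$. Finally, $\Omega$ is a component of the open set $\rho_{TUD}(T)=\CC\setminus\sigma_{TUD}(T)$ and hence connected, so a relatively clopen subset must be $\emptyset$ or all of $\Omega$: the first case gives $\Omega\subset\sigma_D(T)$, the second gives $\Omega\subset\rho(T)\cup E$, i.e. $\Omega\setminus E\subset\rho(T)$.

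I expect the only genuine subtlety to be the reformulation step, namely invoking the classical identification of the Drazin-invertible points (those with finite, hence necessarily equal, ascent and descent) with $\rho(T)\cup E$. Once $\CC\setminus\sigma_D(T)=\rho(T)\cup E$ is in hand, the remaining connectedness argument, fed by Theorem~\ref{Dra} and the closedness coming from Proposition~\ref{closed}, is routine.
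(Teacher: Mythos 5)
Your proof is correct, but it takes a genuinely different route from the paper's. The paper gets the dichotomy in one stroke from its connected-hull machinery: by Corollary \ref{pocetna-rub} one has $\partial\sigma_D(T)\subset\sigma_{TUD}(T)\subset\sigma_D(T)$, and Proposition \ref{pocetna} (Harte--Wickstead) then says directly that any component $\Omega$ of $\CC\setminus\sigma_{TUD}(T)$ satisfies $\Omega\subset\sigma_D(T)$ or $\Omega\cap\sigma_D(T)=\emptyset$; the identification $\sigma_D(T)=\sigma(T)\setminus E$ finishes, exactly as in your reformulation step. You instead run the connectedness argument by hand: on $\Omega$ the pointwise equivalence (2)$\Leftrightarrow$(7) of Theorem \ref{Dra} gives $\Omega\cap\sigma_D(T)=\Omega\cap\inter\,\sigma(T)$, hence relative openness; compactness of $\sigma_D(T)$ (Proposition \ref{closed}) gives relative closedness; and connectedness of $\Omega$ yields the dichotomy. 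This is a fair trade: the paper's version is shorter given that the hull lemma is already in place, and the boundary inclusion it invokes is itself ultimately a consequence of Theorem \ref{Dra} (via Corollaries \ref{cor-Dra} and \ref{pocetna-rub}), so both arguments rest on the same underlying local fact; yours is more self-contained, replacing the black-box Proposition \ref{pocetna} with an explicit clopen argument and making visible exactly where TUD at every point of $\Omega$ is used. One small slip at the end: your case labels are transposed --- the case $\Omega\cap\sigma_D(T)=\Omega$ gives $\Omega\subset\sigma_D(T)$, while $\Omega\cap\sigma_D(T)=\emptyset$ gives $\Omega\subset\rho(T)\cup E$, i.e.\ $\Omega\setminus E\subset\rho(T)$; this is purely expository and does not affect the argument.
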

\begin{proof} Since $\partial\sigma_D(T)\subset\sigma_{TUD}(T)$, from Proposition \ref{pocetna} it follows that
\begin{equation}\label{omega1}
 \Omega\subset \sigma_D(T) \ \ {\rm or}\ \ \Omega\cap \sigma_D(T)=\emptyset.
\end{equation}
 If the second formula in \eqref{omega1} holds, then, as $\sigma_D(T)=\sigma(T)\setminus E$, we obtain that $(\Omega\setminus E)\cap \sigma(T)=\emptyset$, which implies $\Omega\setminus E\subset\rho(T)$.
\end{proof}
In  \cite[Corollary 2.12]{Q. Jiang} Q. Jiang, H. Zhong and S. Zhang
  obtained the same result as Theorem \ref{omega} by using the  constancy of the mappings $\lambda\mapsto K(\lambda I-T)+N(\lambda I-T)$ and $\lambda\mapsto \overline{K(\lambda I-T)\cap H_0(\lambda I-T)}$ on the components of $\rho_{TUD}(T)$,
 however our proof is rather  different. They also obtained  that if $\rho_{TUD}(T)$ has only one component, then $\sigma_{TUD}(T)=\sigma_D(T)$. We get this result in a different way and also obtain that analogous assertion holds for   other spectra.
\begin{theorem}\label{only one}
Let $T\in L(X)$ and let  $\sigma_{*}\in\{\sigma_{TUD}, \sigma_{q\Phi},\sigma_{Kt},\sigma_{B\Phi},\sigma_{B\W}, \sigma_{LD}^e, \sigma_{B\W_+},\break \sigma_{LD}, \sigma_{RD}^e, \sigma_{B\W_-}, \sigma_{RD}, \sigma_{dsc}^e, \sigma_{dsc}  \}$. Then there is implication
$$
\CC\setminus\sigma_{*}(T) \ {\rm has\ only\ one\ component}\Longrightarrow \sigma_{*}(T)=\sigma_{D}(T).
$$
\end{theorem}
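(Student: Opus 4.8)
The plan is to derive the statement as a direct corollary of the equality of connected hulls proved in Theorem \ref{eta}(2), combined with the elementary observation that a compact planar set coincides with its own connected hull exactly when its complement has no bounded component. The content of the result is essentially packaged already in Theorem \ref{eta}; what remains is to translate the topological hypothesis on $\CC\setminus\sigma_*(T)$ into a statement about holes and then chain two inclusions.

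First I would record the two standing facts that hold uniformly across the whole list of thirteen spectra. Each $\sigma_*(T)$ is compact by Proposition \ref{closed}, and each satisfies $\sigma_*(T)\subseteq\sigma_D(T)$ by the chain of inclusions displayed immediately before Theorem \ref{eta} (in every branch of that diagram the terminal set is $\sigma_D(T)$). From compactness one always has $\sigma_D(T)\subseteq\eta\sigma_D(T)$, and Theorem \ref{eta}(2) supplies $\eta\sigma_*(T)=\eta\sigma_D(T)$.

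Next I would translate the hypothesis. Since $\sigma_*(T)$ is compact, its complement $\CC\setminus\sigma_*(T)$ possesses exactly one unbounded component; the assumption that it has only one component then forces that component to be the unbounded one, so $\CC\setminus\sigma_*(T)$ has no bounded component, i.e.\ $\sigma_*(T)$ has no holes. By the definition of the connected hull as the complement of the unbounded component, this says precisely that $\eta\sigma_*(T)=\sigma_*(T)$. (Alternatively, one may quote Theorem \ref{eta}(3) verbatim: $\sigma_D(T)$ is obtained from $\sigma_*(T)$ by filling in some of its holes, and if there are none then $\sigma_D(T)=\sigma_*(T)$ on the spot.)

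Combining the two steps closes the argument in a single line,
$$
\sigma_D(T)\subseteq\eta\sigma_D(T)=\eta\sigma_*(T)=\sigma_*(T)\subseteq\sigma_D(T),
$$
whence $\sigma_*(T)=\sigma_D(T)$. The only place demanding any care — and the step I would flag as the main, albeit minor, obstacle — is the passage from the topological hypothesis to $\eta\sigma_*(T)=\sigma_*(T)$, which relies on compactness to guarantee a unique unbounded component and thereby to identify ``only one component'' with ``no holes.'' The degenerate case $\sigma_*(T)=\emptyset$ is handled automatically: then $\CC\setminus\sigma_*(T)=\CC$ is connected, and Remark \ref{the same connected hulls} (applied to $\eta\sigma_*(T)=\eta\sigma_D(T)$) yields $\sigma_D(T)=\emptyset$ as well.
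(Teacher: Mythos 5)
Your proof is correct and follows essentially the same route as the paper's: the authors likewise observe that the one-component hypothesis means $\sigma_*(T)$ has no holes, so $\sigma_*(T)=\eta\sigma_*(T)$, and then close with the identical chain $\sigma_D(T)\subset\eta\sigma_D(T)=\eta\sigma_*(T)=\sigma_*(T)\subset\sigma_D(T)$ via Theorem \ref{eta}. Your extra care about the unique unbounded component and the empty-spectrum case is fine but adds nothing beyond what the paper's argument already implicitly uses.
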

\begin{proof} Since
$\CC\setminus\sigma_{*}(T)$ has only one component, it follows that
$\sigma_{*}(T)$ has no holes and hence  $\sigma_{*}(T)=\eta
\sigma_{*}(T)$. According to  Theorem \ref{eta} we conclude that
$\sigma_{D}(T)\subset\eta  \sigma_{D}(T)=\eta \sigma_{*}(T)=
\sigma_{*}(T)\subset \sigma_{D}(T)$ and hence
$\sigma_{D}(T)=\sigma_{*}(T)$.
\end{proof}

\medskip

Let $F_0(X)$ denote set of finite rank operators on $X$.
Now we can prove Theorem 2.10 in \cite{P13} in a different way.
\begin{theorem} Let $T\in L(X)$ and suppose that $\sigma_{B\W}(T)$ is simply connected. Then $T+F$ satisfies the generalized version II of the Weyl's theorem for every $F\in F_0(X)$.
\end{theorem}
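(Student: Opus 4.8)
The plan is to reduce the statement, via Theorem \ref{only one}, to showing that $\sigma_{B\W}(T+F)$ has no holes for every $F\in F_0(X)$, and to obtain this by transferring the simple connectedness of $\sigma_{B\W}(T)$ through the ordinary Weyl spectrum. Recall that $T+F$ satisfies the generalized version II of Weyl's theorem precisely when $\sigma(T+F)\setminus\sigma_{B\W}(T+F)$ is the set of poles of the resolvent of $T+F$; since $\sigma(T+F)\setminus\sigma_D(T+F)$ is exactly this set of poles, the property is equivalent to the equality $\sigma_{B\W}(T+F)=\sigma_D(T+F)$. Now $\sigma_{B\W}(T)$ simply connected means that $\CC\setminus\sigma_{B\W}(T)$ is connected, so the goal becomes to prove that $\CC\setminus\sigma_{B\W}(T+F)$ is connected as well: once this is known, Theorem \ref{only one} (applied with $\sigma_{*}=\sigma_{B\W}$ to the operator $T+F$) yields $\sigma_{B\W}(T+F)=\sigma_D(T+F)$, which is the desired conclusion.

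First I would record two facts. The Weyl spectrum is invariant under finite rank perturbations, i.e. $\sigma_{\W}(T+F)=\sigma_{\W}(T)$ for every $F\in F_0(X)$, since $\sigma_{\W}(S)=\bigcap_{K}\sigma(S+K)$ with $K$ ranging over compact operators and adding $F$ merely reparametrises this intersection. Second, by Corollary \ref{cor-W}(4) and (7), for any $S\in L(X)$ one has $\inter\,\sigma_{\W}(S)=\inter\,\sigma_{B\W}(S)$ and $\sigma_{\W}(S)\setminus\sigma_{B\W}(S)$ consists of at most countably many points, each isolated in $\sigma_{\W}(S)$; in particular each such point lies in $\CC\setminus\sigma_{B\W}(S)$ and is a limit point of $\CC\setminus\sigma_{\W}(S)$. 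Both $\sigma_{\W}$ and $\sigma_{B\W}$ are compact by Proposition \ref{closed}.

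The core is then a purely topological observation: removing countably many points from a connected open subset of $\CC$ leaves it connected, and adjoining to a connected subset of $\CC$ countably many of its limit points leaves it connected. Applying the first statement to the open set $\CC\setminus\sigma_{B\W}(T)$ together with the countable set $\sigma_{\W}(T)\setminus\sigma_{B\W}(T)\subseteq\CC\setminus\sigma_{B\W}(T)$ shows that $\CC\setminus\sigma_{\W}(T)$ is connected. By finite rank invariance $\sigma_{\W}(T+F)=\sigma_{\W}(T)$, so $\CC\setminus\sigma_{\W}(T+F)$ is connected too. Finally, applying the second statement to this connected set together with the points of $\sigma_{\W}(T+F)\setminus\sigma_{B\W}(T+F)$ (each isolated in $\sigma_{\W}(T+F)$, hence a limit point of its complement) shows that $\CC\setminus\sigma_{B\W}(T+F)=\bigl(\CC\setminus\sigma_{\W}(T+F)\bigr)\cup\bigl(\sigma_{\W}(T+F)\setminus\sigma_{B\W}(T+F)\bigr)$ is connected. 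Theorem \ref{only one} then gives $\sigma_{B\W}(T+F)=\sigma_D(T+F)$, i.e. the generalized version II of Weyl's theorem for $T+F$.

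I expect the main obstacle to be the bookkeeping in the topological step: one must check that the points relating $\sigma_{\W}$ and $\sigma_{B\W}$ really are isolated in $\sigma_{\W}$, so that in one direction they are interior to the complement of $\sigma_{B\W}$ and in the other they are limit points of the complement of $\sigma_{\W}$, guaranteeing that no hole is created or destroyed. This is exactly where Corollary \ref{cor-W}(4),(7) and the compactness from Proposition \ref{closed} are used. A secondary point is the correct identification of the generalized version II of Weyl's theorem with the poles equality $\sigma_{B\W}(T+F)=\sigma_D(T+F)$; once this identification is made, the argument needs no perturbation theory for the B-Weyl spectrum itself, only the invariance of the ordinary Weyl spectrum under finite rank perturbations.
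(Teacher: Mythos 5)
Your argument is correct, but it travels a genuinely different road than the paper's proof, which is a two-line affair: the paper invokes the invariance of the B-Weyl spectrum itself under finite rank perturbations, $\sigma_{B\W}(T+F)=\sigma_{B\W}(T)$ \cite[Theorem 4.3]{berkani-3}, so that simple connectedness transfers instantly, and then applies Theorem \ref{eta} (the set $\sigma_D(T+F)$ is $\sigma_{B\W}(T+F)$ together with some of its holes, of which there are none) to conclude $\sigma_D(T+F)=\sigma_{B\W}(T+F)$. You instead transfer the hypothesis through the \emph{ordinary} Weyl spectrum: Schechter's characterization $\sigma_{\W}(S)=\bigcap_{K}\sigma(S+K)$ gives $\sigma_{\W}(T+F)=\sigma_{\W}(T)$, and Corollary \ref{cor-W} (4), (6), (7) together with Proposition \ref{closed} lets you pass between $\CC\setminus\sigma_{B\W}$ and $\CC\setminus\sigma_{\W}$ at the cost of removing or adjoining an at most countable set of points, each isolated in $\sigma_{\W}$; the two standard planar facts you cite (a connected open subset of $\CC$ minus countably many points stays connected; a connected set plus limit points stays connected, since $A\subseteq B\subseteq\overline{A}$ with $A$ connected forces $B$ connected) are applied correctly, and Theorem \ref{only one} with $\sigma_{*}=\sigma_{B\W}$ then closes the argument exactly as Theorem \ref{eta} does for the paper. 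Your identification of the generalized version II of Weyl's theorem with the equality $\sigma_{B\W}(T+F)=\sigma_{D}(T+F)$ is the same one the paper uses implicitly, and is legitimate because $\sigma_{B\W}\subseteq\sigma_D\subseteq\sigma$ and $\sigma(T+F)\setminus\sigma_D(T+F)$ is precisely the set of poles. What your route buys is independence from the perturbation theory of the B-Weyl spectrum: you only need the classical stability of $\sigma_{\W}$ under compact perturbations, a much older and more robust fact than the finite-rank stability of $\sigma_{B\W}$ cited by the paper, at the price of the topological bookkeeping you correctly identified as the delicate step. What the paper's route buys is brevity: one citation replaces your entire connectedness-transfer argument.
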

\begin{proof} From $F\in F_0(X)$ it follows that $\sigma_{B\W}(T)=\sigma_{B\W}(T+F)$ \cite[Theorem 4.3]{berkani-3} and  $\sigma_{B\W}(T+F)$ is simply connected. According to
Teorem \ref{eta}, since there are no holes in $\sigma_{B\W}(T+F)$, we conclude that $\sigma_D(T+F)=\sigma_{B\W}(T+F)$, and so $T+F$ satisfies the generalized version II of the Weyl's theorem.
\end{proof}

\begin{corollary}\label{prazan} 
Let $T\in L(X)$.
\begin{itemize}

\item[(1)] If $\sigma_p(T)\subset \partial\sigma_p(T)$,  then $\sigma_{LD}(T)=\sigma_{TUD}(T)$.

 \item[(2)]  If $\sigma_{cp}(T)\subset\partial \sigma_{cp}(T)$, then $\sigma_{dsc}(T)=\sigma_{TUD}(T)$ and $\sigma_{RD}(T)=\sigma_{q\Phi}(T)$.

\item[(3)] If $\sigma_*(T)=\partial \sigma_*(T)$, where $\sigma_*\in\{ \sigma_{D}, \sigma_{LD}, \sigma_{LD}^e,\sigma_{B\Phi}, \sigma_{B\W_+},\sigma_{B\W} \}$, then
$\sigma_{*}(T)=\sigma_{TUD}(T)$.

\item[(4)] If $\sigma_{dsc}(T)=\partial \sigma_{dsc}(T)$,  then $\sigma_{dsc}(T)=\sigma_{TUD}(T)$ and $\sigma_{RD}(T)=\sigma_{q\Phi}(T)$.


\item[(5)]  If $\sigma_{dsc}^e(T)=\partial \sigma_{dsc}^e(T)$, then $\sigma_{dsc}^e(T)=\sigma_{TUD}(T)$ and $\sigma_{RD}^e(T)=\sigma_{q\Phi}(T)$.
%
%

\item[(6)] If  $\sigma_{B\W_-}(T)=\partial \sigma_{B\W_-}(T)$,  then $\sigma_{B\W_-}(T)=\sigma_{q\Phi}(T)$.


\end{itemize}

\end{corollary}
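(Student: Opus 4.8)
The plan is to reduce all six items to a single elementary observation combined with the decomposition formulas already established in Corollaries~\ref{cor-W}, \ref{cor-F} and \ref{cor-Dra}. The common thread is that each hypothesis asserts that a certain spectrum is contained in its own boundary, which is precisely the statement that its interior is empty. Indeed, for any $K\subset\CC$ one always has $\inter K\cap\partial K=\emptyset$, so from the chain $\inter K\subset K\subset\partial K$ together with this disjointness one gets $\inter K=\emptyset$; and when $K$ is closed the hypothesis $K=\partial K$ is likewise equivalent to $\inter K=\emptyset$. Thus in every item the first step is to record that the interior of the spectrum appearing in the hypothesis vanishes.

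The second step is to substitute $\inter\,\sigma_*(T)=\emptyset$ into the relevant identity. Each cited corollary expresses a B-type spectrum as the union of either $\sigma_{TUD}(T)$ or $\sigma_{q\Phi}(T)$ with the interior of an appropriate spectrum, so once the interior term drops out the desired equality is immediate. Concretely, for (1) I would apply Corollary~\ref{cor-Dra}(1) with $\sigma_*=\sigma_p$ to obtain $\sigma_{LD}(T)=\sigma_{TUD}(T)\cup\inter\,\sigma_p(T)=\sigma_{TUD}(T)$; for (2) Corollary~\ref{cor-Dra}(2) and (3) with $\sigma_*=\sigma_{cp}$ yield the two equalities at once. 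Part (3) splits according to the list: $\sigma_D$ uses Corollary~\ref{cor-Dra}(4), $\sigma_{LD}$ uses Corollary~\ref{cor-Dra}(1), $\sigma_{LD}^e$ and $\sigma_{B\Phi}$ use Corollary~\ref{cor-F}(1) and (4), while $\sigma_{B\W_+}$ and $\sigma_{B\W}$ use Corollary~\ref{cor-W}(1) and (3) --- in each case taking $\sigma_*$ equal to the very spectrum named in the hypothesis, so that the interior term is its own. Items (4), (5) and (6) are handled identically, via Corollary~\ref{cor-Dra}(2)--(3) with $\sigma_*=\sigma_{dsc}$, Corollary~\ref{cor-F}(2)--(3) with $\sigma_*=\sigma_{dsc}^e$, and Corollary~\ref{cor-W}(2) with $\sigma_*=\sigma_{B\W_-}$, respectively.

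I do not expect a genuine obstacle: the whole corollary is a formal consequence of the decomposition identities, so the work lies entirely in the bookkeeping of which corollary to invoke. The one point requiring a little care is the distinction between the two styles of hypothesis. In (1) and (2) the spectra $\sigma_p(T)$ and $\sigma_{cp}(T)$ need not be closed, which is why the hypothesis is phrased as the inclusion $\sigma_*(T)\subset\partial\sigma_*(T)$; here emptiness of the interior must be deduced from the disjointness of interior and boundary rather than from closedness. In (3)--(6) the relevant spectra are compact by Proposition~\ref{closed}, so the equality $\sigma_*(T)=\partial\sigma_*(T)$ directly forces $\inter\,\sigma_*(T)=\emptyset$. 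With this single remark settled, each item follows in one line.
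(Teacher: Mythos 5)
Your proposal is correct and takes essentially the same route as the paper: the paper's proof likewise deduces $\inter\, \sigma_p(T)=\emptyset$ from $\sigma_p(T)\subset\partial\sigma_p(T)$ and applies Corollary \ref{cor-Dra} (1), then dispatches the remaining items ``similarly'' via Corollaries \ref{cor-Dra}, \ref{cor-F} and \ref{cor-W}. Your explicit bookkeeping of which decomposition formula serves each item, and your remark distinguishing the inclusion-style hypotheses for the possibly non-closed sets $\sigma_p(T)$, $\sigma_{cp}(T)$ from the equality-style hypotheses for the compact spectra, simply spell out what the paper leaves implicit.
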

\begin{proof} (1) From $\sigma_p(T)\subset \partial\sigma_p(T)$ it follows that  $\inter\, \sigma_p(T)=\emptyset$. Using Corollary \ref{cor-Dra} (1) we get $\sigma_{LD}(T)=\sigma_{TUD}(T)$.

The rest of  assertions  can be proved similarly by using Corollaries  \ref{cor-Dra},  \ref{cor-F}  and \ref{cor-W}.
\end{proof}

\begin{remark}\label{poslednja} \rm   Q. Jiang, H. Zhong and S. Zhang concluded in \cite[p. 1156]{Q. Jiang} that if $\sigma(T)$ is  contained in a line segment, then $\sigma_{D}(T)=\sigma_{TUD}(T)$. From Corollary \ref{prazan} (3) we get  that if $\sigma(T)$ is  contained in a line, then
$\sigma_{D}(T)=\sigma_{TUD}(T)$. If $T$ is
unitary operator on Hilbert space, then its spectrum is contained in a
line and so, $\sigma_{D}(T)=\sigma_{TUD}(T)$.

From  Corollary \ref{prazan} it follows also that  if $\sigma_{*}(T)$ is  contained in a line for  $\sigma_{*}\in\{\sigma_{LD},  \sigma_{D},\break \sigma_{LD}^e, \sigma_{B\W_+}, \sigma_{B\W}, \sigma_{B\Phi},     \sigma_{dsc}^e, \sigma_{dsc}  \}$,
    then $\sigma_{*}(T)=\sigma_{TUD}(T)$. Also, if $\sigma_{*}(T)$ is  contained in a line for
     $\sigma_{*}\in\{  \sigma_{RD}^e, \sigma_{B\W_-}, \sigma_{RD}\}$, then $\sigma_{*}(T)=\sigma_{qF}(T)$.

  Therefore,  if $\sigma_{\bf R}(T)$ is  contained in a line for
     $\R\in\{{\bf R_6, R_7,R_8,R_9,R_{10}, R^a_4, R^a_5},\break \W_+(X),\W(X),\Phi(X), \B(X), L(X)^{-1}\}$, then
$\sigma_{\bf{BR}}(T)=\sigma_{TUD}(T)$. If $\R\in\{{\bf R_1,R_2,R_3,R_4},\break {\bf R_5},\W_-(X)\}$ and  $\sigma_{\bf R}(T)$ is  contained in a line, then $\sigma_{\bf{BR}}(T)=\sigma_{qF}(T)$ (see also Theorem \ref{final}).

Furthermore, if $\sigma_{dsc}^e(T)$ ($\sigma_{dsc}(T)$)  is  contained in a line, then  $\sigma_{RD}^e(T)=\sigma_{qF}(T)$ ($\sigma_{RD}(T)=\sigma_{qF}(T)$).
If   $\sigma_p(T)$ ($\sigma_{cp}(T)$) is countable or contained in a line, then  $\sigma_{LD}(T)=\sigma_{TUD}(T)$ ($\sigma_{RD}(T)=\sigma_{q\Phi}(T)$ and $\sigma_{dsc}(T)=\sigma_{TUD}(T)$).
\end{remark}
 \begin{example} \rm
 If $X$ is one of $c_{0}(\ZZ)$ and $\ell_{p}(\ZZ)$, $p\ge 1$,
then for the forward and backward bilateral shifts $W_1,\ W_2\in
L(X)$ there are equalities
$
  \sigma (W_1)=\sigma (W_2)=\partial\DD$, where $\DD=\{\lambda\in\CC:|\lambda|\le 1\}$.
 From $\acc\, \sigma (W_1)=\acc\, \sigma (W_2)=\partial\DD$ we conclude that $\sigma_D (W_1)=\sigma_D (W_2)=\partial\DD$ and since $\sigma (W_1)$ and $\sigma (W_2)$ are contained in a line, we obtain $\sigma_{TUD} (W_1)=\sigma_{TUD} (W_2)=\partial\DD$.
\end{example}




\begin{corollary}\label{skoro}
Let $T\in L(X)$.

 If
 $\sigma_{*}\in\{\sigma_{\W_+}, \sigma_{\W_-},\sigma_\W, \sigma_{B\W_-}, \sigma_{{\Phi_+}},  \sigma_{{\Phi_-}},  \sigma_{\Phi},\sigma_{RD}^e, \sigma_{ap},\sigma_{su},\sigma_{\B_+}, \sigma_{\B_-}, \sigma_{\B},\sigma_{RD}, \sigma\}$
   and
\begin{equation}\label{bac}
\sigma_{*}(T)=\partial\sigma_{*}(T)=\acc\,
\sigma_{*}(T),
\end{equation}
 then
 \begin{equation}\label{solo}
\sigma_{TUD}(T)=
\sigma_{*}(T).
\end{equation}
\end{corollary}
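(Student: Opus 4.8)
The plan is to reduce the whole statement to one elementary set-theoretic observation and then dispatch the fifteen spectra by quoting the equalities already established in Section~2, with a single genuinely new ingredient needed for the two lower-type Weyl spectra.

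First I would record the observation. Suppose $A\subset\sigma_\sharp\subset\sigma_*$ are subsets of $\CC$ with $\sigma_*$ closed and
\[
\sigma_\sharp=A\cup\inter\,\sigma_*=A\cup\acc\,\sigma_*.
\]
Since $\sigma_*$ is closed, the hypothesis $\sigma_*=\partial\,\sigma_*$ is equivalent to $\inter\,\sigma_*=\emptyset$, while $\sigma_*=\acc\,\sigma_*$ says $\sigma_*$ has no isolated points. The first equality then forces $\sigma_\sharp=A$, and the second gives $\sigma_\sharp=A\cup\sigma_*=\sigma_*$ (because $A\subset\sigma_*$); hence $A=\sigma_\sharp=\sigma_*$. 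Every spectrum occurring below is closed (Proposition~\ref{closed} together with the classical Fredholm theory for $\sigma,\sigma_{ap},\sigma_{su},\sigma_{\Phi_\pm},\sigma_\Phi,\sigma_{\W_\pm},\sigma_\W,\sigma_{\B_\pm},\sigma_\B$), so the observation applies.

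I would then run the observation with $A=\sigma_{TUD}(T)$. For each spectrum the intermediate set $\sigma_\sharp$ and the defining double equality come straight from the Section~2 corollaries: $\sigma_\sharp=\sigma_{B\W_+}(T)$ for $\sigma_*=\sigma_{\W_+}$ and $\sigma_\sharp=\sigma_{B\W}(T)$ for $\sigma_*=\sigma_\W$ (Corollary~\ref{cor-W}); $\sigma_\sharp=\sigma_{LD}^e(T)$ for $\sigma_*=\sigma_{\Phi_+}$, $\sigma_\sharp=\sigma_{dsc}^e(T)$ for $\sigma_*\in\{\sigma_{\Phi_-},\sigma_{RD}^e\}$ and $\sigma_\sharp=\sigma_{B\Phi}(T)$ for $\sigma_*=\sigma_\Phi$ (Corollary~\ref{cor-F}); and $\sigma_\sharp=\sigma_{LD}(T)$ for $\sigma_*\in\{\sigma_{ap},\sigma_{\B_+}\}$, $\sigma_\sharp=\sigma_{dsc}(T)$ for $\sigma_*\in\{\sigma_{su},\sigma_{\B_-},\sigma_{RD}\}$ and $\sigma_\sharp=\sigma_D(T)$ for $\sigma_*\in\{\sigma_\B,\sigma\}$ (Corollary~\ref{cor-Dra}). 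In each case the chain $\sigma_{TUD}(T)\subset\sigma_\sharp(T)\subset\sigma_*(T)$ holds because the larger operator class sits inside the smaller (e.g. $\I(X)\subset LD(X)$, $\B_+(X)\subset LD(X)$, $\Phi_-(X)\subset\{\delta_e<\infty\}$ via $c_0=\beta$, $RD(X)\subset\{\delta<\infty\}$, invertible $\Rightarrow$ Drazin invertible), and because every quasi-Fredholm operator has TUD. The observation then yields $\sigma_{TUD}(T)=\sigma_*(T)$ for all thirteen of these; note in particular that routing $\sigma_{RD}^e,\sigma_{RD}$ through $\sigma_{dsc}^e,\sigma_{dsc}$ rather than through $\sigma_{q\Phi}$ is what keeps the base set equal to $\sigma_{TUD}$.

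The remaining case $\sigma_*\in\{\sigma_{\W_-},\sigma_{B\W_-}\}$ is the one genuine obstacle: here the only available double equality is Corollary~\ref{cor-W}(2), phrased with $\sigma_{q\Phi}(T)$, so the observation (with $A=\sigma_{q\Phi}(T)$, $\sigma_\sharp=\sigma_{B\W_-}(T)$) gives only $\sigma_{q\Phi}(T)=\sigma_*(T)$. To upgrade this I would prove the lemma $\acc\,\sigma_{q\Phi}(T)\subset\sigma_{TUD}(T)$: if $\lambda\notin\sigma_{TUD}(T)$ then $T-\lambda I$ has TUD, so by Grabiner's punctured neighbourhood theorem \cite[Theorem 4.7 and Corollary 4.8]{Grabiner} there is $\epsilon>0$ with $T-\mu I$ semi-regular, hence quasi-Fredholm (with $k_n=0$ for all $n$ and closed range), for $0<|\mu-\lambda|<\epsilon$; thus $\lambda\notin\acc\,\sigma_{q\Phi}(T)$. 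Since $\sigma_{q\Phi}(T)=\sigma_*(T)=\acc\,\sigma_*(T)=\acc\,\sigma_{q\Phi}(T)$, the lemma gives $\sigma_{q\Phi}(T)\subset\sigma_{TUD}(T)$, and the always-valid reverse inclusion yields $\sigma_{TUD}(T)=\sigma_{q\Phi}(T)=\sigma_*(T)$. The crux is exactly this passage from the $\sigma_{q\Phi}$-characterisation to the $\sigma_{TUD}$-characterisation, and it is the punctured neighbourhood theorem — surrounding every TUD point by semi-regular, hence quasi-Fredholm, points — that makes it work; everything else is a bookkeeping application of the corollaries.
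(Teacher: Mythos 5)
Your proof is correct, but it takes a genuinely different route from the paper's. The paper disposes of the corollary in two lines: Corollaries \ref{cor-W-moved}, \ref{cor-LDe-moved} and \ref{cor-a-moved} already give $\partial\sigma_{*}(T)\cap\acc\,\sigma_{*}(T)\subset\sigma_{TUD}(T)$ for every $\sigma_*$ in the list --- including the lower-type spectra $\sigma_{\W_-},\sigma_{B\W_-},\sigma_{RD}^e,\sigma_{RD}$ --- the point being that the interior/accumulation equivalences (1)--(4) of Theorems \ref{F-+}, \ref{SF-} and \ref{su} hold under TUD alone, quasi-Fredholmness being needed only for the operator-theoretic statements ((5), (8), (10)); combining with \eqref{bac} yields $\sigma_{*}(T)=\partial\sigma_{*}(T)\cap\acc\,\sigma_{*}(T)\subset\sigma_{TUD}(T)$, and the reverse inclusion is automatic. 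You instead run a clean set-theoretic observation against the equality corollaries \ref{cor-W}, \ref{cor-F} and \ref{cor-Dra}; this works verbatim for thirteen of the spectra, and your routing of $\sigma_{RD}^e$ and $\sigma_{RD}$ through $\sigma_{dsc}^e$ and $\sigma_{dsc}$ (Corollary \ref{cor-F}(2), Corollary \ref{cor-Dra}(2)), whose base set is $\sigma_{TUD}$ rather than $\sigma_{q\Phi}$, is exactly the right move. For $\sigma_{\W_-}$ and $\sigma_{B\W_-}$ you compensate for the $\sigma_{q\Phi}$-base of Corollary \ref{cor-W}(2) with the lemma $\acc\,\sigma_{q\Phi}(T)\subset\sigma_{TUD}(T)$, proved via Grabiner's punctured neighbourhood theorem (every TUD point is surrounded by semi-regular, hence quasi-Fredholm, points); the lemma is correct, and is consistent with $\eta\sigma_{TUD}(T)=\eta\sigma_{q\Phi}(T)$ in Theorem \ref{eta}, but the paper never needs it, since Theorem \ref{F-+} ((2)$\Rightarrow$(1)) is available under TUD alone. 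In sum: your approach is longer where the paper is immediate, but it buys a reusable auxiliary lemma and a uniform bookkeeping scheme that makes the fifteen cases mechanical.
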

\begin{proof} From Corollaries \ref{cor-W-moved},  \ref{cor-LDe-moved} and  \ref{cor-a-moved} we have that $\partial \sigma_{*}(T)\cap\acc\, \sigma_{*}(T)\subset \sigma_{TUD}(T)$, which together with the equalities \eqref{bac} gives the inclusion $\sigma_{*}(T)\subset\sigma_{TUD}(T)$. Since
 $\sigma_{TUD}(T)\subset
\sigma_{*}(T)$, we get \eqref{solo}.
\end{proof}

We recall that if $K\subset\CC$ is compact, then
for $\lambda\in\partial K$  the following equivalence holds:
\begin{equation}\label{zz}
 \lambda\in \acc\, K\Longleftrightarrow \lambda\in \acc\, \partial K.
\end{equation}
The following corollary is an improvement of Theorem 2.10 and Corollary 2.11 in \cite{aienarosas}.

\begin{corollary}\label{isom} Let $T\in L(X)$.
\begin{enumerate}

\item
 Let $T$ be an operator for which $\sigma_{ap}(T)=\partial\sigma (T)$ and every $\lambda\in \partial\sigma (T)$ is not isolated in $\sigma (T)$. Then
$\sigma_{ap}(T)=\sigma_{TUD}(T)
$.

\item
 Let $T$ be an operator for which $\sigma_{su}(T)=\partial\sigma (T)$ and every $\lambda\in \partial\sigma (T)$ is not isolated in $\sigma (T)$. Then
$\sigma_{su}(T)=\sigma_{TUD}(T)
$.

\end{enumerate}
\end{corollary}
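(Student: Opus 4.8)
The plan is to reduce both statements to Corollary \ref{skoro}, so the whole task is to check that its hypothesis \eqref{bac} holds for the spectrum in question. I treat part (1); part (2) is word-for-word the same with $\sigma_{su}$ in place of $\sigma_{ap}$, since both appear in the list of admissible $\sigma_*$ in Corollary \ref{skoro}. Thus I must deduce from $\sigma_{ap}(T)=\partial\sigma(T)$ together with the assumption that no point of $\partial\sigma(T)$ is isolated in $\sigma(T)$ that
$$
\sigma_{ap}(T)=\partial\sigma_{ap}(T)=\acc\,\sigma_{ap}(T).
$$

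First I would verify $\sigma_{ap}(T)=\partial\sigma_{ap}(T)$, i.e. $\inter\,\sigma_{ap}(T)=\emptyset$. Here I use that $\sigma(T)$ is closed, so $\partial\sigma(T)=\sigma(T)\setminus\inter\,\sigma(T)$. If some nonempty open set were contained in $\partial\sigma(T)=\sigma_{ap}(T)$, it would lie in $\sigma(T)$ and hence in $\inter\,\sigma(T)$, contradicting $\partial\sigma(T)\cap\inter\,\sigma(T)=\emptyset$. Therefore the boundary of the closed set $\sigma(T)$ is nowhere dense and $\inter\,\sigma_{ap}(T)=\emptyset$; since $\sigma_{ap}(T)$ is itself closed, this gives $\partial\sigma_{ap}(T)=\sigma_{ap}(T)$.

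Next I would establish $\sigma_{ap}(T)=\acc\,\sigma_{ap}(T)$. By hypothesis every $\lambda\in\partial\sigma(T)$ belongs to $\acc\,\sigma(T)$. Applying the equivalence \eqref{zz} to the compact set $K=\sigma(T)$ and to $\lambda\in\partial\sigma(T)$ converts this into $\lambda\in\acc\,\partial\sigma(T)$. Because $\sigma_{ap}(T)=\partial\sigma(T)$, this reads $\sigma_{ap}(T)\subseteq\acc\,\sigma_{ap}(T)$, and the reverse inclusion is automatic as $\sigma_{ap}(T)$ is closed. Together with the previous paragraph this yields \eqref{bac} for $\sigma_*=\sigma_{ap}$, and Corollary \ref{skoro} then gives $\sigma_{TUD}(T)=\sigma_{ap}(T)$, as claimed.

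I do not expect a genuine obstacle, since the substantive work is already packed into Corollary \ref{skoro}; the whole point is to recognize that the two hypotheses unpack \emph{exactly} into condition \eqref{bac}. The only steps requiring care are the empty-interior observation (that the boundary of the closed spectrum is nowhere dense, which forces $\partial\sigma_{ap}(T)=\sigma_{ap}(T)$) and the correct invocation of \eqref{zz}, whose applicability is guaranteed precisely because the relevant points lie in $\partial\sigma(T)=\sigma_{ap}(T)$.
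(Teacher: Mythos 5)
Your proposal is correct and takes essentially the same route as the paper: both verify condition \eqref{bac} for $\sigma_{ap}$ (resp.\ $\sigma_{su}$) and then invoke Corollary \ref{skoro}, with the equivalence \eqref{zz} applied to $K=\sigma(T)$ doing the work for $\sigma_{ap}(T)\subset\acc\,\sigma_{ap}(T)$. The only cosmetic difference is in establishing $\sigma_{ap}(T)=\partial\sigma_{ap}(T)$: the paper uses the inclusion chain $\partial\sigma(T)\subset\partial\sigma_{ap}(T)\subset\sigma_{ap}(T)$, while you observe directly that the boundary of the closed set $\sigma(T)$ is nowhere dense, so $\inter\,\sigma_{ap}(T)=\emptyset$ --- both are valid one-line arguments.
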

\begin{proof}
From $\sigma_{ap}(T)=\partial\sigma (T)$ and $\partial\sigma(T)\subset\partial\sigma_{ap}(T)\subset\sigma_{ap}(T)$   it follows
that $\sigma_{ap}(T)=\partial\sigma_{ap}(T)$, while from (\ref{zz})
it follows that every $\lambda\in \partial\sigma (T)$ is not
isolated in $\partial\sigma (T)$. Therefore, every $\lambda\in \partial\sigma (T)$ is not
isolated  in $\sigma_{ap}(T)$ and hence, $\sigma_{ap}(T)\subset\acc\, \sigma_{ap}(T)$. Thus $\sigma_{ap}(T)=\partial\sigma_{ap}(T)=\acc\,
\sigma_{ap}(T)$ and from Corollary \ref{skoro} it follows that $\sigma_{ap}(T)=\sigma_{TUD}(T)
$.

The assertion (2) can be proved similarly.
\end{proof}
\begin{example} \rm For  each  $X\in \{  c_0(\NN), c(\NN), \ell_{\infty}(\NN),
\ell_p(\NN)\}$, $p\ge 1$, and the forward and backward unilateral shifts
$U$, $V\in L(X)$ there are equalities $\sigma(U)=\sigma(V)=\DD$, $\sigma_D(U)=\sigma_D(V)=\DD$ and  $
\sigma_{ap}(U)=\sigma_{su}(V)=\partial\DD$. By using Corollary \ref{skoro} (or Corollary \ref{isom}) we obtain that $\sigma_{TUD}(U)=\sigma_{ap}(U)=\partial\DD$ and $\sigma_{TUD}(V)=\sigma_{su}(V)=\partial\DD$. It implies that
$$\sigma_{TUD}(U)=\sigma_{q\Phi}(U)=\sigma_{LD}^e(U)=\sigma_{B\W_+}(U)=\sigma_{LD}(U)=\sigma_{ap}(U)=\partial\DD$$
and
$$\sigma_{TUD}(V)\! =\! \sigma_{q\Phi}(V)\! =\! \sigma_{RD}^e(V)\! =\! \sigma_{B\W_-}(V)\! =\! \sigma_{RD}(V)\! =\! \sigma_{dsc}(V)\! =\! \sigma_{dsc}^e(V)\! =\! \sigma_{su}(V)\! =\! \partial\DD.$$

 As $\sigma_{su}(U)=\DD$, from Corollary \ref{cor-Dra} ((2), (3)) it follows that $\sigma_{dsc}(U)=\sigma_{RD}(U)=\DD$.
 Since $\sigma_{\Phi}(U)=\sigma_{\Phi}(V)=\p\DD$ \cite[Theorem 4.2]{ZDH}, from Remark  \ref{poslednja}  we get that $\sigma_{B\Phi}(U)=\sigma_{TUD}(U)=\partial\DD$ and similarly,
 $\sigma_{B\Phi}(V)=\p\DD$. From the inclusions $\sigma_{TUD}(U)\subset\sigma_{Kt}(U)\subset \sigma_{B\Phi}(U)$ we have that $\sigma_{Kt}(U)=\p\DD$ and similarly, $\sigma_{Kt}(V)=\p\DD$.
%
From $\p\sigma_{\Phi}(U)\subset\sigma_{\Phi_-}(U)\subset\sigma_{\Phi}(U)$ it follows  that   $\sigma_{\Phi_-}(U)=\p\DD$, that is $\sigma_{\Phi_-}(U)$ is contained in the line and hence, by Remark  \ref{poslednja} 
we get   that $\sigma_{dsc}^e(U)=\sigma_{TUD}(U)=\p\DD$ and $\sigma_{RD}^e(U)=\sigma_{q\Phi}(U)=\p\DD$.

 From $\p\sigma_{\Phi}(V)\subset\sigma_{\Phi_+}(V)\subset\sigma_{\Phi}(V)$ we conclude that   $\sigma_{\Phi_+}(V)=\p\DD$ which according to Remark  \ref{poslednja} implies that $\sigma_{LD}^e(V)=\sigma_{TUD}(V)=\p\DD$. As $\sigma_{RD}(V)=\p\DD$ and $\sigma_D(V)=\DD$, we get $\sigma_{LD}(V)=\DD$.
 From $\sigma_{\Phi}(V)=\p\DD$, $\sigma_{ap}(V)=\DD$ and $\sigma_{su}(V)=\p\DD$, we conclude that for   $|\lambda|<1$ it holds that $V-\lambda I$ is Fredholm with positive  index and so, $\{\lambda\in\CC:|\lambda|<1\}\subset\sigma_{\W_+}(V)\subset\sigma_{\W}(V)\subset\DD$, which implies that $\sigma_{\W_+}(V)=\sigma_{\W}(V)=\DD$. From Corollary \ref{cor-W} (1), (3) it follows that $\sigma_{B\W_+}(V)=\DD$ and  $\sigma_{B\W}(V)=\DD$. Similarly, from $\sigma_{\Phi}(U)=\p\DD$, $\sigma_{ap}(U)=\p\DD$ and $\sigma_{su}(U)=\DD$ it follows that $\sigma_{\W_-}(U)=\sigma_{\W}(U)=\DD$, which by Corollary \ref{cor-W} (2), (3) implies that $\sigma_{B\W_-}(U)=\sigma_{B\W}(U)=\DD$.
\end{example}
For $\lambda_0\in \CC$, the open disc, centered  at $\lambda_0$  with radius $\epsilon$ in $\CC$, is denoted  by $D(\lambda_0,\epsilon)$.
\begin{example} \rm  Every  non-invertible isometry $T$ has the property that $\sigma(T)=\overline{D(0,r(T))}$ and $\sigma_{ap}(T)=\partial D(0,r(T))$, where $r(T)$ is  the spectral radius of $T$ \cite[p. 187]{aienarosas}. Hence   $\sigma_{ap}(T)=\partial\sigma (T)$ and every $\lambda\in \partial\sigma (T)$ is not isolated in $\sigma (T)$. Therefore, according to Corollary  \ref{isom}, for arbitrary non-invertible isometry $T$ we get that
$  \sigma_{TUD}(T)=\sigma_{q\Phi}(T)=\sigma_{LD}^e(T)=\sigma_{B\W_+}(T)=\sigma_{LD}(T)=\sigma_{ap}(T)= \partial D(0,r(T))$.
\end{example}

\begin{example}{\em For the $Ces\acute{a}ro\ operator$ $C_p$ defined on the classical Hardy space $H_p(\DDD)$, $\DDD$ the open unit disc and $1<p<\infty$, by
$$
 (C_pf)(\lambda)=\ds\frac 1{\lambda}\int_0^{\lambda}\ds\frac{f(\mu)}{1-\mu}\, d\mu,\ \, {\rm for\ all\ }f\in H_p(\DDD)\ {\rm and\ }\lambda\in\DDD,
 $$
it is known that its spectrum is  the closed disc $\Gamma_p$ centered at $p/2$ with radius $p/2$, $\sigma_{Kt}(C_p)=\sigma_{ap}(C_p)=\partial \Gamma_p$ and also $\sigma_{\Phi}(C_p)=\partial \Gamma_p$ \cite{Mill}, \cite{aienarosas}.
According to Corollary \ref{skoro} or Corollary \ref{isom} we get that $\sigma_{TUD}(C_p)=\sigma_{q\Phi}(C_p)=\sigma_{LD}^e(C_p)=\sigma_{B\W_+}(C_p)=\sigma_{LD}(C_p)=\sigma_{ap}(C_p)=\partial \Gamma_p$.
Since $\sigma_{\Phi}(C_p)$ is contained in the line  and hence also $\sigma_{\Phi_+}(C_p)$ and $\sigma_{\Phi_-}(C_p)$ are  contained in the line, according to Remark \ref{poslednja} we conclude that $\sigma_{B\Phi} (C_p)=\sigma_{dsc} ^e(C_p)=\sigma_{TUD}(C_p)=\partial \Gamma_p$ and $\sigma_{RD} ^e(C_p)=\sigma_{qF}(C_p)=\partial \Gamma_p$. From
 $\sigma(C_p)= \Gamma_p$ and  $\sigma_{ap}(C_p)=\partial \Gamma_p$ it follows that  and $\sigma_{su}(C_p)= \Gamma_p$ which together with $\sigma_{\Phi}(C_p)=\partial \Gamma_p$ implies that $\sigma_{\W_-}(C_p)=\sigma_{\W}(C_p)=\Gamma_p$. Now from Corollary \ref{cor-W} (2), (3) we obtain that $\sigma_{B\W_-}(C_p)=\sigma_{B\W}(C_p)=\Gamma_p$. As $\sigma_{D}(C_p)= \Gamma_p$ and  $\sigma_{LD}(C_p)=\partial \Gamma_p$, it follows that $\sigma_{RD}(C_p)= \Gamma_p$, which by  Corollary \ref{cor-Dra} (2) implies that $\sigma_{dsc}(C_p)= \Gamma_p$.
}
\end{example}

%
%
%
%
%
%
%
%

\noindent
\author{Sne\v zana
\v C. \v Zivkovi\'c-Zlatanovi\'c}

\noindent{University of Ni\v s\\
Faculty of Sciences and Mathematics\\
P.O. Box 224, 18000 Ni\v s, Serbia}

\noindent {\it E-mail}: {\tt mladvlad@mts.rs}

\bigskip
\noindent
\author{Mohammed Berkani}

\noindent Department of Mathematics,\\
 \noindent Science faculty of Oujda,\\
\noindent University Mohammed I,\\
\noindent Laboratory LAGA, \\
\noindent Morocco\\
\noindent berkanimo@aim.com,\\


\begin{thebibliography}{99}

\bibitem{Ai} P. Aiena, {\em Fredholm and local spectral theory,
with applications to multipliers}, Kluwer Academic Publishers
(2004).

\bibitem{aienarosas} P. Aiena, E. Rosas, {\em Single-valued
extension property at the points of the approximate point spectrum},
J. Math. Anal. Appl. 279 (2003), 180-188.

\bibitem{aienatriolo}  P. Aiena, S. Triolo, {\em
Local spectral theory for Drazin invertible operators}, J. Math. Anal. Appl.
435(1) (2016),  414-424.

\bibitem{aienatriolo2}  P. Aiena, S. Triolo, {\em Fredholm Spectra and Weyl Type Theorems for Drazin Invertible Operators},
Mediterranean Journal of Mathematics
13(6) (2016),  4385-4400.

\bibitem{P7}{M. Berkani},
{\em On a class of quasi-Fredholm operators}, Integr. Equ. Oper.
Theory, 34 (1999), 244-249.




\bibitem{P8}{M. Berkani},
{\em Restriction of an operator to the range of its powers},
Studia Mathematica, 140 (2)\, (2000), 163-174.

\bibitem{BerkaniSarih} {M.Berkani, M. Sarih}, {\em On semi B-Frdholm operators}, Glasgow Math. J. 43 (2001) 457-465.

\bibitem{berkani-3}{M. Berkani},
{\em Index of B-Fredholm operators and generalization of a Weyl
Theorem\/}, Proc. Amer. Math. Soc., 130 (6)\, (2002), 1717-1723.

\bibitem{P13} {M. Berkani}, {\em B-Weyl spectrum and poles of the
resolvent}, J. Math. Anal. App. 272 (2002), 596-603.

\bibitem{BCD} M. Berkani, N. Castro, S.V. Djordjevi\'c, {\em Single valued extension property and generalized Weyl’s theorem}, Math. Bohem. 131 (1) (2006) 29–38.

\bibitem{BKMO} M. Burgos, A. Kaidi, M. Mbekhta and M. Oudghiri, {\em The descent spectrum and perturbations}, J. Operator Theory, 56:2 (2006), 259-271.


\bibitem{Fredj} O. Bel Hadj Fredj, {\em Essential  descent spectrum and commuting compact perturbations}, Extracta Math. 21(3),  (2006), 261-271.


\bibitem{Grabiner}  S. Grabiner, {\it Uniform ascent and descent of bounded operators}; J. Math. Soc.
Japan 34, no 2, (1982), 317-337.


\bibitem{GZ} { S. Grabiner and J. Zem\' anek} {\em Ascent, descent, and ergodic properties of linear operators}, J. Operator Theory, 48 (2002), 69-81.





\bibitem{H}R.E. Harte, {\it Invertibility and Singularity for Bounded Linear Operators}, Marcel
Dekker, Inc., New York/Basel, 1988.

\bibitem{HW} R.E. Harte and A.W. Wickstead, {\it Boundaries, hulls
and spectral mapping theorems}, Proc. Royal Irish Acad. 81A (1981)
201-208.



\bibitem{JiangJMAA} Q. Jiang, H. Zhong, Q. Zeng, {\em Topological uniform descent and Localized SVEP}, J. Math. Anal. Appl. 390 (2012) 355–361.


\bibitem{Q. Jiang} Q. Jiang, H. Zhong and S. Zhang, {\em Components of topological uniform descent resolvent set
and local spectral theory}, Linear Algebra Appl., 438 (2013), 1149-1158.

\bibitem{Kaashoek} M. A. Kaashoek, \textit{Ascent, descent, nullity and defect, a note on a
paper by A. E. Taylor}, Math. Annalen, {\bf 172} (1967), 105--115.


\bibitem{KM} V. Kordula and V.  M\"uller, {\em On the axiomatic theory of spectrum}, Studia Math. ibid. 119 (1996), 109-128.


\bibitem{MbekhtaMuller} M. Mbekhta and V.  M\"uller, {\em On the axiomatic theory of spectrum II}, Studia Math. 119(2) (1996),  129-147.


\bibitem{Mill} T. L. Miller, V. G. Miller, R. C. Smith, {\it Bishop's property ($\beta$) and $Ces\acute{a}ro\ operator$}, J. London Math. Soc. (2)58 (1998) 197-207.



\bibitem {MV} D. Mili\v ci\'c and K. Veseli\'c, {\it On the boundary
of essential spectra}, Glasnik Mat. tom 6 (26) No 1 (1971), 73-78.


\bibitem{Mu} V. M\"uller, {\em Spectral theory of linear operators and spectral
systems in Banach algebras}, Birkh\"auser (2007).

\bibitem{V1}  V. Rako\v cevi\'c, {\it On one subset of
M. Schechter's essential spectrum}, Mat.Vesnik, {\bf 33}(1981),
389-391.

\bibitem {V2} V. Rako\v cevi\' c, {\it
Approximate point spectrum and commuting comact perturbations},
Glasgow Math. J., {\bf 28}(1986), 193-198.

 \bibitem {ZDH} S. \v C. \v Zivkovi\'c-Zlatanovi\'c, D. S.
Djordjevi\'c and R.E. Harte, {\it Polynomially Riesz
perturbations}, J. Math. Anal. Appl. 408 (2013) 442–451.

\end{thebibliography}
\end{document}